\theoremstyle{definition} 
\newtheorem{theorem}{Theorem}[section]
\newtheorem{theorem*}{Theorem}
\newtheorem{corollary}[theorem]{Corollary}
\newtheorem{lemma}[theorem]{Lemma}
\theoremstyle{definition}        
\newtheorem{proposition}[theorem]{Proposition}
\theoremstyle{definition}        
\newtheorem{definition}[theorem]{Definition}
\theoremstyle{definition}        
\newtheorem{remark}[theorem]{Remark}
\numberwithin{equation}{section}                    
\newcommand\RR{{\mathbb{R}}}
\newcommand\NN{{\mathbb{N}}}
\newcommand\supx{{\sup_{x\in\RR^3}}}
\newcommand\dxi{{\partial_{x_i}}}
\newcommand\dxj{{\partial_{x_j}}}
\newcommand\xij{{x_i x_j}}
\newcommand\xxi{{{x_i}}}
\newcommand\xxj{{{x_j}}}
\newcommand\dt{{\frac{d}{dt}}}
\newcommand\intt{{\int_0^t}}
\newcommand\intr{{\int_{\RR^3}}}
\newcommand\kk{{ ^{(k)} }}
\newcommand\ek{{^{(k-1)}}}
\newcommand\ie{{{\textsl{z}}^{-1}}}
\newcommand\z{{\textsl{z}}}
\newcommand\C{{\eta}}
\def\P{{\mathbb{P}}}
\def\N{\ensuremath{\mathbb N}}
\def\P{\ensuremath{\mathbb P}}
\def\F{{\mathcal F}}
\def\W{{\mathcal W}}
\newcommand{\xdashrightarrow}[2][]{\ext@arrow 0359\rightarrowfill@@{#1}{#2}}
\newcommand{\xdashleftarrow}[2][]{\ext@arrow 3095\leftarrowfill@@{#1}{#2}}
\newcommand{\xdashleftrightarrow}[2][]{\ext@arrow 3359\leftrightarrowfill@@{#1}{#2}}
\def\rightarrowfill@@{\arrowfill@@\relax\relbar\rightarrow}
\def\leftarrowfill@@{\arrowfill@@\leftarrow\relbar\relax}
\def\leftrightarrowfill@@{\arrowfill@@\leftarrow\relbar\rightarrow}
\def\arrowfill@@#1#2#3#4{%
  $\m@th\thickmuskip0mu\medmuskip\thickmuskip\thinmuskip\thickmuskip
   \relax#4#1
   \xleaders\hbox{$#4#2$}\hfill
   #3$%
}
\title{On  local well-posedness of the stochastic  incompressible density-dependent Euler equations}
\begin{document}

\title{On local well-posedness of the stochastic  incompressible density-dependent Euler equations}
\author{Christian Olivera\footnote{Departamento de Matemática, IMECC, Universidade Estadual de Campinas, SP-Brazil.\\
\href{mailto:colivera@ime.unicamp.br}{colivera@ime.unicamp.br}} , Claudia Espitia\footnote{Departamento de Matemática, IMECC, Universidade Estadual de Campinas,
SP-Brazil.\\\href{mailto:cespitia@ime.unicamp.br}{cespitia@ime.unicamp.br}} , David A. C. Mollinedo\footnote{Departamento Acadêmico de Matemática, Universidade Tecnológica Federal do Paraná, PR-Brazil.\\ \href{mailto:davida@utfpr.edu.br}{davida@utfpr.edu.br}}}

\date{}

\maketitle
\begin{abstract}
In this paper we study the stochastic inhomogeneous incompressible Euler equations in the whole space  $\RR^3$. We prove the existence and pathwise uniqueness of local solutions with both additive and multiplicative stochastic noise.  Our approach is based on reducing our problem to a random problem and some estimations for  type transport equations.

\end{abstract}

 \tableofcontents

\pagebreak
 
 \section{Introduction}

 In this work, we are concerned with the following models of incompressible inviscid fluid with variable density:

\begin{equation}\label{eq0}
\begin{cases}
d\rho +  v \cdot \nabla \rho \,dt=0,&\\
 d v + (v\cdot \nabla)v \,dt+ \frac{\nabla \pi}{\rho}\,dt = -v\circ\,d \W, & \\
\text{div } v =0, &
\end{cases}
\end{equation}

\begin{equation}\label{eq0bi}
\begin{cases}
d\rho +  v \cdot \nabla \rho \,dt=0,&\\
 d v + (v\cdot \nabla)v \,dt+ \frac{\nabla \pi}{\rho}\,dt = d \W^{Q}, & \\
\text{div } v =0, &
\end{cases}
\end{equation}
with initial conditions given by
\begin{align*}
    \left\{
    \begin{aligned}
    &\rho\big|_{t=0}=\rho_0(x),\\
        &v\big|_{t=0} = v_0(x),
    \end{aligned}
    \right.
\end{align*}
in  $\Omega \times [0,T] \times \RR^3$, for some $T>0$, where the velocity vector field $v$, and the scalar fields of density $\rho$ and pressure 
$\pi$ are the unknowns. Here, we work within a stochastic framework, considering a stochastic basis $(\Omega, \F, (\F_t)_{t\geq 0},  \P)$ with a complete right-continuous filtration,  $\W$ a standard scalar valued Brownian Motion
 and  $\W^{Q}$ is a $Q$-Cylindrical Brownian Motion over  $W^{k,2}(\RR^3)$, with $k>5/2$.  In equation \eqref{eq0}, $\W$ denotes the standard scalar valued Wiener process while the symbol $\circ$ indicates that the stochastic integral in the weak formulation of the problem is interpreted in the Stratonovich sense.\\

Among all the equations used in fluid dynamics, the Euler equations are the classical model for the motion of an inviscid, incompressible fluid. 
The addition of stochastic terms to the governing equations is frequently used to capture numerical, empirical, and physical uncertainties in fields spanning from climatology to turbulence theory.
We also recall that these stochastic Euler equations can be viewed as the limit of stochastic particle systems with moderate iteration, see \cite{Correa} and 
\cite{Correa2}. The Stratonovich form is the natural one for several reasons, including physical intuition related to the Wong–Zakai principle and is the right one to deal with manifold-valued SPDEs, see \cite{Bre}. \\

The deterministic incompressible density-dependent Euler equations have been extensively investigated.
We will recall only the results on the local existence and uniqueness of solutions.  Local   well-posedness results, in bounded domains,  were obtained in  \cite{Veiga, Veiga2} by considering initial data of class  $C^{\infty}$.  Later,  \cite{ITO}  proved the well-posedness in the Sobolev space $W^{2,p}(U)$  with $U$ being either a bounded or unbounded smooth domain of $\RR^{3}$. In the works \cite{ITO2} and  \cite{ITO3}, the author  considered the whole space  $\RR^{3}$  and obtained local existence of solution with initial condition in  $H^{s}$. For results in Besov Space, see \cite{Chae}, \cite{Danchin},  \cite{Ferreira} and  \cite{Has}.\\

The literature on the stochastic homogeneous incompressible Euler equations consists of a number of works, including \cite{Bessa}, \cite{Bessa2}, \cite{Bre3}, \cite{Bre2},  \cite{Vicol}, \cite{KIM}, \cite{KIM2}, \cite{Mi}. They show, among other things, global well-posedness for the $2D$ Euler equations with additive and multiplicative noise and local well-posedness for $d=3$.
However fundamental questions of well–posedness and even more existence of solutions to problems dealing with stochastic perturbations of  inhomogeneous incompressible fluids are, to the best of our knowledge, open. \\

Our goal is to establish the local existence and uniqueness   of $W^{2,p}(\RR^{3})-$solution of the stochastic
Euler equations (\ref{eq0}) and (\ref{eq0bi}).  As in the deterministic case, global existence and uniqueness is a famous open problem. This naturally leads to the question of whether noise can restore well-posedness, a concept often referred to as regularization by noise. There is evidence that stochastic perturbations, particularly those involving transport, may have a regularizing influence, see e.g. \cite{Bagnara-Maurelli-Xu, Fedrizzi, Flandoli2}. However, such effects are not universal, as shown in \cite{Breit2, Chi}, where solutions still develop singularities in finite time. \\

More precisely, the aim of the present paper is to prove the following theorems: the first one concerns the case of multiplicative noise, while the second one deals with additive noise.

\begin{theorem}\label{maintheorem}
    Suppose that for $3<p<\infty$,
 the function $\rho_0$ satisfies
 \begin{align}\label{initialconditionsrho}
  \rho_0 \in C(\RR^3),\quad 0< m \le \rho_0 \leq M, \quad  \nabla\rho_0 \in W^{1,p}(\RR^3),
 \end{align}
 and the vector field $v_0$ verifies
 \begin{align}\label{initialconditions-v0}
   v_0 \in W^{2,p}(\RR^3)\,, \quad \text{div } v_0 =0\,.
 \end{align}
Then, there exists a unique local pathwise solution $(\rho,\nabla \pi, v, \tau)$ for Problem \eqref{eq0} in the sense of Definitions \ref{deflocalsolution} and \ref{localuniqueness}. Moreover, there also exists a unique maximal pathwise solution in the sense of Definitions \ref{defmaximalsolution} and \ref{maxuniqueness}.

 \end{theorem}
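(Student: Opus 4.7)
The plan is to follow the strategy announced in the abstract: absorb the multiplicative noise through a scalar substitution, thus reducing \eqref{eq0} to a pathwise random PDE, and then solve this system by a Picard iteration in the spirit of Itoh \cite{ITO2,ITO3}. Concretely, I would introduce the new unknowns
\begin{equation*}
u(t,x)=e^{W_t}\,v(t,x),\qquad q(t,x)=e^{W_t}\,p(t,x).
\end{equation*}
Applying the Stratonovich product rule and using $(v\cdot\nabla)v=e^{-2W_t}(u\cdot\nabla)u$, the stochastic integral cancels and one arrives at the random system
\begin{equation*}
\partial_t \rho+e^{-W_t}\,u\cdot\nabla \rho =0,\qquad
\partial_t u+e^{-W_t}\,(u\cdot\nabla)u+\rho^{-1}\nabla q=0,\qquad
\operatorname{div} u=0.
\end{equation*}
For each fixed $\omega$, the factor $\alpha(t,\omega)=e^{-W_t(\omega)}$ is continuous and bounded on $[0,T]$, so this is a deterministic density-dependent Euler system with a time-dependent scalar coefficient, to which pathwise methods apply.

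For each realisation I would then build $(u,\rho)$ iteratively. Given $(u^{(k-1)},\rho^{(k-1)})$, define $\rho^{(k)}$ as the solution of the linear transport equation
\begin{equation*}
\partial_t \rho^{(k)}+\alpha(t)\,u^{(k-1)}\cdot\nabla\rho^{(k)}=0,\qquad \rho^{(k)}|_{t=0}=\rho_0,
\end{equation*}
which, by divergence-freeness of $u^{(k-1)}$, preserves the two-sided bound $m\le \rho^{(k)}\le M$; standard commutator estimates then propagate $\nabla\rho^{(k)}\in W^{1,p}$. Recover the pressure $q^{(k)}$ from the variable-coefficient elliptic problem
\begin{equation*}
-\operatorname{div}\bigl((\rho^{(k)})^{-1}\nabla q^{(k)}\bigr)=\alpha(t)\,\operatorname{div}\bigl((u^{(k-1)}\cdot\nabla)u^{(k-1)}\bigr),
\end{equation*}
and then obtain $u^{(k)}$ by solving a linear equation with source $-(\rho^{(k)})^{-1}\nabla q^{(k)}$. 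The key a priori bound takes the form
\begin{equation*}
\frac{d}{dt}\bigl(\|u^{(k)}\|_{W^{2,p}}+\|\nabla\rho^{(k)}\|_{W^{1,p}}\bigr)\le C(\omega)\,\varphi\bigl(\|u^{(k-1)}\|_{W^{2,p}}+\|\nabla\rho^{(k-1)}\|_{W^{1,p}}\bigr),
\end{equation*}
from which uniform control on a random interval $[0,\tau(\omega)]$ follows by comparison with a scalar ODE.

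The main analytical obstacle is the pressure estimate: since $\rho$ is variable, the operator $-\operatorname{div}(\rho^{-1}\nabla\cdot)$ has non-constant coefficients, so one must apply Calder\'on--Zygmund theory for divergence-form operators pathwise. Exploiting $3<p<\infty$, the embedding $W^{2,p}\hookrightarrow C^{1,\alpha}$, and the uniform bound $m\le \rho^{(k)}\le M$, one obtains
\begin{equation*}
\|\nabla q^{(k)}\|_{W^{1,p}}\lesssim \bigl(1+\|\nabla\rho^{(k)}\|_{W^{1,p}}\bigr)\,\|u^{(k-1)}\|_{W^{2,p}}^2,
\end{equation*}
with a multiplicative dependence on $\|\alpha\|_{L^\infty(0,T)}$. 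This superlinear dependence is what forces the stopping-time construction. A contraction argument in the weaker norm $W^{1,p}$ then yields the limit $(u,\rho)$ on $[0,\tau]$, and passage to the limit in the weak formulation gives a solution of the random PDE.

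Finally, reverting the substitution via $v=e^{-W_t}u$ and $p=e^{-W_t}q$ produces an $(\F_t)$-adapted pair satisfying \eqref{eq0} in the sense of Definition \ref{deflocalsolution}. Pathwise uniqueness follows from the same contraction estimate applied to the difference of two putative solutions after passage to the random PDE; no genuinely stochastic estimate is needed. The maximal solution is then constructed by the standard Zorn-type argument, gluing local solutions along an increasing sequence of stopping times and using the blow-up criterion $\limsup_{t\uparrow\tau_{\max}}\|v(t)\|_{W^{2,p}}=\infty$ on $\{\tau_{\max}<T\}$.
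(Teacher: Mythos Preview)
Your overall strategy coincides with the paper's: the exponential substitution $\tilde v=e^{W_t}v$, the reduction to a pathwise random system, the Picard iteration through transport/elliptic/linear-Euler subproblems, the uniform $W^{2,p}$ bound on a random time interval, convergence of the iterates in the weaker $W^{1,p}$ topology with the limit recovered in $W^{2,p}$ by weak compactness, and the standard maximality construction are exactly what the paper carries out.

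There is, however, a genuine gap in your iteration. You obtain $u^{(k)}$ from the linear equation with source $-(\rho^{(k)})^{-1}\nabla q^{(k)}$ and then, in the next round, invoke ``divergence-freeness of $u^{(k-1)}$'' to propagate $m\le\rho^{(k)}\le M$. But that linear step does \emph{not} preserve $\operatorname{div}u=0$: taking the divergence leaves a residual term $\sum_{i,j}\partial_j u^{(k-1),i}\bigl(\partial_i u^{(k),j}-\partial_i u^{(k-1),j}\bigr)$, which vanishes only at the fixed point, not along the iteration. The paper repairs this by inserting a Leray projection at every step: after computing $u^{(k)}$ one sets $\tilde v^{(k)}=u^{(k)}-\nabla\phi^{(k)}$ with $\Delta\phi^{(k)}=\operatorname{div}u^{(k)}$, and it is the genuinely divergence-free field $\tilde v^{(k)}$ that advects $\rho^{(k+1)}$ and feeds the next pressure equation. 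Only after the limits exist does one show $\operatorname{div}u=0$, whence $\phi=0$ and $u=\tilde v$. Without this projection your transport step loses the two-sided bound on $\rho^{(k)}$ and the elliptic estimate collapses.

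Two smaller points also need attention. To close the $W^{2,p}$ estimate on $u^{(k)}$ you need $\nabla q^{(k)}\in W^{2,p}$, not merely $W^{1,p}$ as you wrote; the paper gets this from the elliptic theory of \cite{ITO}. And the local existence time must be an $(\mathcal F_t)$-stopping time, not just a random number, for the solution to be progressively measurable; the paper secures this by taking $\tau=\inf\{t:\int_0^t e^{-W_s}\,ds\ge A^{-2}\}\wedge T$, a hitting time of a continuous adapted process. Your ``comparison with a scalar ODE'' should be cast so that this measurability is explicit.
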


\begin{theorem}\label{maintheorembi} 
We assume 
that $\W^Q=(\W_{1}^Q, \W_{2}^Q, \W_{3}^Q)$  where 
$\W_{i}^Q$ are  $Q$-Cylindrical Brownian Motion  in 
$W^{k,2}(\RR^3)$, where $Q$ is trace class,  $k> 5/2$ and $\text{div} \  \W^Q =0$.  Moreover,  we suppose that for $3<p<\infty$,
 the function $\rho_0$ satisfies
 \begin{align}\label{initialconditionsrhobi}
  \rho_0 \in C(\RR^3),\quad 0< m \le \rho_0 \leq M, \quad  \nabla\rho_0 \in W^{1,p}(\RR^3),
 \end{align}
 and the vector field $v_0$ verifies
 \begin{align}\label{initialconditions-v0vi}
   v_0 \in W^{2,p}(\RR^3)\,, \quad \text{div } v_0 =0\,.
 \end{align}
Then, there exists a unique local pathwise solution $(\rho,\nabla \pi, v, \tau)$ for Problem \eqref{eq0bi} in the sense of Definition \ref{deflocalsolutionbi} and \ref{localuniqueness}. Moreover, there also exists a unique maximal pathwise solution in the sense of Definitions \ref{defmaximalsolutionbi} and \ref{maxuniqueness}.

 \end{theorem}

The proofs are based on estimations for the type transport equation in $W^{2,p}$ following the framework outlined in \cite{ITO}.  We also completed several proofs that were not fully addressed in \cite{ITO}.\\

The paper is organized as follows: In section 2 we consider the space where we are going to work and define the notion of solution to the stochastic Euler equations \eqref{eq0} and \eqref{eq0bi}. In Section 3, we study auxiliary problems associated with these equations and derive the corresponding a priori estimates. These estimates are crucial for analyzing the sequence of successive approximations defined in the subsequent sections. In Section 4, we prove our first main result, Theorem \ref{maintheorem}, concerning the case of multiplicative noise. We begin by constructing a sequence of successive approximations related to the auxiliary problems introduced earlier. Then, using the a priori estimates, we establish the convergence of this sequence and, as a consequence, obtain the existence and uniqueness results for problem \eqref{eq0}. We conclude Section 4 by proving the existence of a maximal pathwise solution. Finally, in Section 5, we address the case of additive noise. 


In closing, we would like to mention that future research will address to study inhomogeneous Euler systems with more general noise.


\section{Space and definition of solutions}

In this section, we introduce Sobolev spaces and the concepts of both local and maximal pathwise solutions for our problems \eqref{eq0} and \eqref{eq0bi}.

For $k \in  \NN \cup \left\{  0 \right\} $ and   $1 \leq p < \infty$, the space $W^{k,p}(\RR^d)$ consists of functions $u\in L^{p}(\RR^{d})$ such that the weak derivatives $D^{m}u$ exists and are in $L^{p}(\RR^{d})$ for all multi-indices $m$ such that
$|m|\leq k$. On this space we define the Sobolev norm

\[
\| u\|_{k,p}= \sum_{|m|=0}^{k}  \|D^{m} u\|_{L^{p}(\RR^{d})}.
\]


\begin{definition}[Local pathwise solution]\label{deflocalsolution}
 Let  $(\Omega, \F, (\F_t)_{t\geq 0},  \P)$ be a stochastic basis with a complete right-continuous filtration, and $\W$ 
 a real Brownian motion relative to the filtration. 
 \begin{itemize}
     \item  Assuming the conditions $3<p<\infty$ and $(\rho_0, v_0)\in C(\RR^3)\times W^{2,p}(\RR^3)$, we call $(\rho, \nabla \pi, v, \tau)$ a local pathwise solution to the system \eqref{eq0} provided
 \begin{itemize}
     \item [(i)]$\tau$ is an a.s. strictly positive $(\F_t)$-stopping time;
     \item [(ii)] the density $\rho$ is a $C(\RR^3)$- progressively measurable process satisfying
     \begin{align*} 
   0< m \le \rho(\cdot \wedge \tau) \leq M, \quad  \nabla \rho(\cdot\wedge \tau)\in C\left([0, T];W^{1,p}(\RR^3)\right)\hspace{0.5cm}\P-a.s
     \end{align*}
     for some constants $m$ and $M$;
     \item[(iii)] the pressure $\nabla \pi$ is a $W^{2,p}(\RR^3)$- progressively measurable process satisfying
     \begin{equation*}
          \nabla \pi(\cdot\wedge \tau)\in C\left([0, T];W^{2,p}(\RR^3)\right)\hspace{0.5cm}\P-a.s;
     \end{equation*}
     \item [(iv)] the velocity $v$ is a $W^{2,p}(\RR^3)$- progressively measurable process satisfying
     \begin{equation*}
         v(\cdot\wedge \tau)\in C\left([0, T];W^{2,p}(\RR^3)\right)\hspace{0.5cm}\P-a.s;
     \end{equation*}
     \item [(v)] there holds $\P$-a.s. 
    \begin{gather*}
        \rho(t\wedge \tau)=\rho_0-\int_0^{t\wedge \tau} v\cdot \nabla \rho\,ds,\\
        v(t\wedge \tau)=v_0-\int_0^{t\wedge \tau}v\cdot \nabla v\,ds-\int_0^{t\wedge \tau}\frac{\nabla \pi}{\rho}\,ds-\int_0^{t\wedge \tau} v\circ\,d\W_s,\\
        \text{div }v=0,
    \end{gather*}
      for every $t\in[0,T]$.
    
 \end{itemize}
 
 \end{itemize}

\end{definition}

\begin{definition}[Local pathwise solution]\label{deflocalsolutionbi}
 Let  $(\Omega, \F, (\F_t)_{t\geq 0},  \P)$ be a stochastic basis with a complete right-continuous filtration, and $\W^{Q}$  is a $Q$-Cylindrical Brownian Motion over  $W^{k,2}(\RR^3)$ relative to the filtration. 
 \begin{itemize}
     \item  Assuming the conditions $3<p<\infty$ and $(\rho_0, v_0)\in C(\RR^3)\times W^{2,p}(\RR^3)$, we call $(\rho, \nabla \pi, v, \tau)$ a local pathwise solution to the system \eqref{eq0bi} provided
 \begin{itemize}
     \item [(i)]$\tau$ is an a.s. strictly positive $(\F_t)$-stopping time;
     \item [(ii)] the density $\rho$ is a $C(\RR^3)$- progressively measurable process satisfying
     \begin{align*} 
   0< m \le \rho(\cdot \wedge \tau) \leq M, \quad  \nabla \rho(\cdot\wedge \tau)\in C\left([0, T];W^{1,p}(\RR^3)\right)\hspace{0.5cm}\P-a.s
     \end{align*}
     for some constants $m$ and $M$;
     \item[(iii)] the pressure $\nabla \pi$ is a $W^{2,p}(\RR^3)$- progressively measurable process satisfying
     \begin{equation*}
          \nabla \pi(\cdot\wedge \tau)\in C\left([0, T];W^{2,p}(\RR^3)\right)\hspace{0.5cm}\P-a.s;
     \end{equation*}
     \item [(iv)] the velocity $v$ is a $W^{2,p}(\RR^3)$- progressively measurable process satisfying
     \begin{equation*}
         v(\cdot\wedge \tau)\in C\left([0, T];W^{2,p}(\RR^3)\right)\hspace{0.5cm}\P-a.s;
     \end{equation*}
     \item [(v)] there holds $\P$-a.s. 
    \begin{gather*}
        \rho(t\wedge \tau)=\rho_0-\int_0^{t\wedge \tau} v\cdot \nabla \rho\,ds,\\
        v(t\wedge \tau)=v_0-\int_0^{t\wedge \tau}v\cdot \nabla v\,ds-\int_0^{t\wedge \tau}\frac{\nabla \pi}{\rho}\,ds  + \W_{t\wedge \tau}^{Q},\\
        \text{div }v=0,
    \end{gather*}
      for every $t\in[0,T]$.
 \end{itemize}
 \end{itemize}
\end{definition}

\begin{definition}[Uniqueness]\label{localuniqueness}

    We say that local pathwise solutions are unique if, given any pair 
   \begin{equation*}
        (\rho^1, \nabla \pi^1, v^1, \tau^1), \quad (\rho^2, \nabla \pi^2, v^2, \tau^2)
   \end{equation*}
   of local pathwise solutions with the same initial data $(\rho_0, v_0)$, it holds that
   \begin{equation*}
     \mathbb{P}\Big( \, (\rho^1,\nabla \pi^1, v^1)(t)
      =(\rho^2,\nabla \pi^2, v^2)(t), \ \forall\, t\in [0, \tau^1\wedge\tau^2] \,\Big)=1.  
   \end{equation*}
\end{definition}

\begin{remark}
It is worth noting that the solutions introduced above are weak in the PDE sense—partial derivatives are interpreted in the sense of distributions—but strong
in the stochastic sense—the stochastic integral is considered on the original probability space.  However,  as a direct consequence of the  theorems \ref{maintheorem} and\ref{maintheorembi}  and  classical embedding theorems the solutions obtained here are strong in the PDE sense.  Moreover, the Stratonovich integral in equation \eqref{eq0}  is understood in a finite-dimensional sense: for each fixed spatial point $x \in \mathbb{R}^3$, the integral reduces to the classical finite-dimensional Stratonovich integral.  
\end{remark}
\begin{remark}
Even more, the embedding theorems imply that the norms $\|\W^{Q} \|_{C^2}$ and $\|\W^{Q} \|_{2,p}$ are bounded by the norm $\|\W^{Q}\|_{k,2}$, with $k>5/2$. This  hypothesis  will be used in the proof of Proposition~\ref{propubi}.
\end{remark}

\begin{definition}[Maximal pathwise solution]\label{defmaximalsolution}
    Fix a stochastic basis with a real Brownian Motion and initial conditions as in Definition \ref{deflocalsolution}. We call $(\rho, \nabla \pi, v, \xi)$ a maximal pathwise solution to the system \eqref{eq0} if the following hold:
\begin{itemize}
    \item[(i)] There exists a sequence of $(\F_t)$-stopping times $\left(\sigma_n\right)_{n\in \mathbb{N}}$, monotone increasing and converging to $\xi$ $\P$-a.s., such that each quadruple $(\rho, \nabla \pi, v, \sigma_n)$, $n \in \mathbb{N}$, is a local pathwise solution.
    \item[(ii)] If $(\rho', \nabla \pi', v', \xi')$ is another solution satisfying the above property, then $\xi \leq \xi'$ $\P$-a.s. implies $\xi = \xi'$ $\P$-a.s. 
\end{itemize}
\end{definition}
\begin{definition}[Maximal pathwise solution]\label{defmaximalsolutionbi}
    Fix a stochastic basis with  $\W^{Q}$ a $Q$-Cylindrical Brownian Motion over $W^{k,2}(\RR^3)$ relative to the filtration,  and initial conditions as in Definition \ref{deflocalsolutionbi}. We call $(\rho, \nabla \pi, v, \xi)$ a maximal pathwise solution to the system \eqref{eq0bi} if the following hold:
\begin{itemize}
    \item[(i)] There exists a sequence of $(\F_t)$-stopping times $\left(\sigma_n\right)_{n\in \mathbb{N}}$, monotone increasing and converging to $\xi$ $\P$-a.s., such that each quadruple $(\rho, \nabla \pi, v, \sigma_n)$, $n \in \mathbb{N}$, is a local pathwise solution.
    \item[(ii)] If $(\rho', \nabla \pi', v', \xi')$ is another solution satisfying the above property, then $\xi \leq \xi'$ $\P$-a.s. implies $\xi = \xi'$ $\P$-a.s. 
\end{itemize}
\end{definition}
\begin{definition}[Uniqueness]\label{maxuniqueness}
We say that maximal pathwise solutions are unique if, given any pair 
   \begin{equation*}
     (\rho^1, \nabla \pi^1, v^1, \xi^1), \quad (\rho^2, \nabla \pi^2, v^2, \xi^2)  
   \end{equation*}
   of maximal pathwise solutions with the same initial data $(\rho_0, v_0)$, we have $\xi^1=\xi^2$ almost surely and
   \begin{equation*}
     \mathbb{P}\Big( \, (\rho^1,\nabla \pi^1, v^1)(t)
      =(\rho^2,\nabla \pi^2, v^2)(t), \ \forall\, t\in [0, \xi^1) \,\Big)=1.  
   \end{equation*}  
\end{definition}

\medskip

\section{A priori estimates }



In this section, we derive several a priori estimates that will play a central role in the analysis of the problems. To streamline calculations, we will employ a change of variables in the second equation of problem \eqref{eq0}, enabling us to handle a random equation instead of a stochastic one.
Let us define a new variable $\Tilde{v}$ as
\begin{equation}\label{cvariables}
    \Tilde{v}=\text{exp}(\W(t))v.
\end{equation}
Using the well-known chain rule for  the Stratonovith integral, it is easy to prove that the stochastic Euler equation
 $$d v + (v\cdot \nabla)v \,dt+ \frac{\nabla p}{\rho}\,dt = -v\circ\,d \W_t$$
transforms into this new random equation
\begin{equation}\label{eqvtilde}
    d\Tilde{v}=-\text{exp}\left(-\W(t)\right) (\Tilde{v}\cdot \nabla) \Tilde{v}\,dt-\frac{1}{\rho}\text{exp}\left( \W(t)\right) \nabla p\,dt.
\end{equation}

\subsection{ A priori estimates for transport equation.}

Concerning the transport equation, we first establish its well-posedness and then derive several important estimates relevant for the subsequent analysis.

\begin{proposition}\label{proprho}
    If $v\in   C([0,T];W^{2,p}(\RR^3))$ satisfies div $v=0$ and 
\begin{equation*}
    \|v\|_{L_\infty([0,T]\times\RR^3)}+\int_0^T \|\nabla v(t)\|_{L_\infty(\RR^3)}\,dt<\infty,
\end{equation*}
then for any $\rho_{0}\in C^1(\RR^3)$ such that $0<m\leq \rho_{0}\leq M< \infty$, the problem 
\begin{equation}\label{eq1}
\begin{cases} 
\rho_t + v \cdot \nabla \rho =0, &\\ 
\rho|_{t=0}=\rho_0(x),&
\end{cases}
\end{equation}
has a unique solution $\rho\in C^{1,1}([0,T]\times\RR^3)$ which satisfies
\begin{equation}\label{estrho1}
    m\leq \rho\leq M.
\end{equation}
\end{proposition}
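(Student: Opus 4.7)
The natural approach is the method of characteristics. Under the hypotheses, the velocity $v$ is uniformly bounded and its spatial gradient is integrable in $L^\infty_x$ in time, so by the Cauchy–Lipschitz theorem the flow ODE
\begin{equation*}
\frac{d}{dt} X(t,x) = v(t, X(t,x)), \qquad X(0,x) = x,
\end{equation*}
admits a unique global solution $X(\cdot,x)$ for every $x\in \RR^3$. My plan is to first establish that $X(t,\cdot):\RR^3 \to \RR^3$ is a $C^1$-diffeomorphism, then to define $\rho(t,y) := \rho_0(X^{-1}(t,y))$, and finally to verify that this $\rho$ is the unique $C^{1,1}$ solution of \eqref{eq1} satisfying the maximum principle.

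For the regularity of the flow I would invoke the Sobolev embedding $W^{2,p}(\RR^3) \hookrightarrow C^{1,\alpha}(\RR^3)$ valid for $p>3$ (with $\alpha = 1 - 3/p$), so that $\nabla v \in C([0,T]; C^{0,\alpha}(\RR^3))$. Differentiating the flow ODE in $x$ and applying Gronwall's lemma together with the bound $\int_0^T \|\nabla v(t)\|_{L^\infty}\,dt < \infty$ yields that $\nabla_x X(t,\cdot)$ exists, is continuous, and is uniformly bounded on $[0,T]\times \RR^3$. The divergence-free condition $\operatorname{div} v = 0$ implies via Liouville's formula that $\det \nabla_x X(t,x) \equiv 1$, so $X(t,\cdot)$ is a volume-preserving $C^1$-diffeomorphism and $X^{-1}(t,\cdot)$ is also $C^1$.

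Setting $\rho(t,y) := \rho_0(X^{-1}(t,y))$, I can then check directly that $\rho$ solves the PDE: along characteristics $\rho(t,X(t,x)) = \rho_0(x)$ is constant in $t$, whence $\rho_t + v\cdot\nabla\rho = 0$ in the classical sense. Since $\rho_0 \in C^1(\RR^3)$ and $X^{-1} \in C^1$, we have $\nabla_y \rho \in C([0,T]\times\RR^3)$ and $\rho_t = -v\cdot \nabla\rho \in C([0,T]\times\RR^3)$, giving $\rho \in C^{1,1}([0,T]\times\RR^3)$ (continuous in both variables together with its first partials). The bound \eqref{estrho1} is immediate from $m \le \rho_0 \le M$ and the identity $\rho = \rho_0 \circ X^{-1}$.

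For uniqueness I would take two solutions $\rho_1,\rho_2$ and observe that $w := \rho_1 - \rho_2$ satisfies the homogeneous transport equation with zero initial data. Evaluating along the characteristics gives $\frac{d}{dt} w(t, X(t,x)) = 0$ and $w(0,x) = 0$, so $w \equiv 0$. The main potential obstacle is ensuring the $C^1$ regularity of $X^{-1}$ under only the Osgood-type assumption $\int_0^T \|\nabla v\|_{L^\infty} dt < \infty$ together with $v \in C([0,T]; W^{2,p})$; this is where the Sobolev embedding for $p>3$ is essential and cannot be weakened, since without pointwise control of $\nabla v$ the flow would only be Hölder continuous in $x$ and the PDE would hold only in the sense of distributions rather than classically.
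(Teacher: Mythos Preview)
Your proposal is correct and follows essentially the same route as the paper: both use the method of characteristics, define $\rho(t,x)=\rho_0(\phi^{-1}(t,x))$ via the flow of $v$, and read off the bound $m\le\rho\le M$ from this representation (the paper simply cites the classical theory in \cite{LS} without spelling out the regularity of the flow). Your extra justification that $X(t,\cdot)$ is a $C^1$-diffeomorphism via the Sobolev embedding for $p>3$, Gronwall, and Liouville's formula is exactly the detail the paper omits.
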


\begin{proof}
By applying the classical method of characteristics we have that there exists a unique solution $\rho\in C^{1,1}([0,T]\times\RR^3)$ of the Eq. \eqref{eq1} given by
\begin{equation}\label{Sol-TE}
\rho(t,x)=\rho_0(\phi^{-1}(t,x)),
\end{equation}
where $\phi(t,x)$ is the flow associated to the vector field $v(t,x)$ through the ordinary differential equation
\begin{align*}
\left\{
\begin{aligned}
& \frac{d\phi}{dt}  =v(t,\phi(t,x))\\
& \phi(0,x)  =x\,.
\end{aligned}
\right.
\end{align*}
Thus, by the representation \eqref{Sol-TE}, it is clear that  the estimate \eqref{estrho1} holds. For more details about the method of characteristics, for example, to see  \cite{LS}.

\end{proof}

The following result is a suitable adaptation of the estimates described in \cite[Lemmas 2.2]{ITO}, without considering the corresponding boundary conditions.

\begin{proposition}\label{proprho2}
  The above solution satisfies

\begin{equation}\label{estrho2}
     \|\nabla \rho\|_{L_\infty([0,T]\times\RR^3)}\leq \sqrt{3} \|\nabla \rho_0\|_{L_\infty(\RR^3)}\text{ exp}\left(\int_0^T \|\nabla v(t)\|_{L_\infty(\RR^3)}\,dt\right),
\end{equation}
\begin{equation}\label{estrho3}
      \|\rho_t\|_{L_\infty([0,T]\times\RR^3)}\leq \sqrt{3}  \|v\|_{L_\infty([0,T]\times\RR^3)}\|\nabla \rho_0\|_{L_\infty(\RR^3)}\text{ exp}\left(\int_0^T \|\nabla v(t)\|_{L_\infty( \RR^3)}\,dt\right).
\end{equation}
Moreover, if $\nabla\rho_0\in W^{1,p}(\RR^3)$ and $v \in  L^1(0, T; W^{2,p}(\RR^3))$, then
\begin{equation}\label{esrho}
    \|\nabla\rho(t)\|_{1,p}\leq  \|\nabla\rho_0\|_{1,p}\text{ exp}\left(c_1 \intt\|v(s)\|_{2,p} \,ds\right).
\end{equation}
\end{proposition}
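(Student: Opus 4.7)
The three estimates are of two different flavors: the first two are pointwise $L^\infty$ bounds obtained by differentiating the transport equation and integrating along characteristics; the third is an $L^p$-based energy estimate.

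\emph{Bounds \eqref{estrho2} and \eqref{estrho3}.} I would start by differentiating the equation $\rho_t + v\cdot\nabla\rho = 0$ in $x_i$ to obtain
\begin{equation*}
\partial_t(\partial_i\rho) + v\cdot\nabla(\partial_i\rho) = -(\partial_i v)\cdot\nabla\rho.
\end{equation*}
Setting $\xi_i(t) = \partial_i\rho(t,\phi(t,x))$ and using the flow equation for $\phi$, this becomes the linear ODE system $\xi_i'(t) = -\sum_j \partial_i v_j(t,\phi(t,x))\,\xi_j(t)$. A Gronwall argument on $|\xi(t)|$, combined with the elementary inequality relating the $\ell^\infty$ and $\ell^2$ norms on $\RR^3$ (which supplies the $\sqrt 3$), yields \eqref{estrho2} after taking suprema in $x$ and using that $\phi(t,\cdot)$ is a bijection. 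Then \eqref{estrho3} is immediate: plug the bound \eqref{estrho2} into the identity $\rho_t = -v\cdot\nabla\rho$ pointwise.

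\emph{Bound \eqref{esrho}.} For the $W^{1,p}$-estimate on $\nabla\rho$ I would use an $L^p$ energy estimate. Differentiating the transport equation once gives, as above,
\begin{equation*}
\partial_t(\partial_i\rho) + v\cdot\nabla(\partial_i\rho) = -(\partial_i v)\cdot\nabla\rho,
\end{equation*}
and differentiating twice gives
\begin{equation*}
\partial_t(\partial_{ij}^2\rho) + v\cdot\nabla(\partial_{ij}^2\rho) = -(\partial_i v)\cdot\nabla\partial_j\rho - (\partial_j v)\cdot\nabla\partial_i\rho - (\partial_{ij}^2 v)\cdot\nabla\rho.
\end{equation*}
Test each equation against $|\partial^\alpha\rho|^{p-2}\partial^\alpha\rho$ and integrate in $x$; the transport term $\int v\cdot\nabla(\partial^\alpha\rho)\,|\partial^\alpha\rho|^{p-2}\partial^\alpha\rho\,dx$ equals $\frac{1}{p}\int v\cdot\nabla|\partial^\alpha\rho|^p\,dx$, which vanishes after integration by parts because $\operatorname{div} v = 0$. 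Hölder on the remaining terms gives, for $|\alpha|=1$,
\begin{equation*}
\tfrac{d}{dt}\|\partial_i\rho\|_{L^p} \le \|\nabla v\|_{L^\infty}\|\nabla\rho\|_{L^p},
\end{equation*}
and for $|\alpha|=2$ a bound of the form $\|\nabla v\|_{L^\infty}\|\nabla^2\rho\|_{L^p} + \|\nabla^2 v\|_{L^p}\|\nabla\rho\|_{L^\infty}$. The Sobolev embedding $W^{1,p}(\RR^3)\hookrightarrow L^\infty(\RR^3)$ (valid since $p>3$) controls both $\|\nabla v\|_{L^\infty}\le c\|v\|_{2,p}$ and $\|\nabla\rho\|_{L^\infty}\le c\|\nabla\rho\|_{1,p}$. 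Summing over $|\alpha|\le 1$ on $\nabla\rho$ produces
\begin{equation*}
\tfrac{d}{dt}\|\nabla\rho(t)\|_{1,p} \le c_1\,\|v(t)\|_{2,p}\,\|\nabla\rho(t)\|_{1,p},
\end{equation*}
and Gronwall's lemma gives \eqref{esrho}.

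\textbf{Main obstacle.} The only non-routine step is justifying the energy-type estimates rigorously: since $\rho$ is only $C^{1,1}$, the second-derivative calculation and the integration by parts with the transport term need to be carried out on a regularization (for example via a standard mollification/commutator argument à la DiPerna--Lions, or by first assuming $\rho_0$ is smooth, propagating regularity by characteristics, and then passing to the limit using the $W^{1,p}$ bound one has just established). Once these manipulations are legitimate, the Hölder/Sobolev estimates and Gronwall are straightforward.
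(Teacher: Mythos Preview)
Your proposal is correct and matches the paper's approach: the paper simply cites the first two bounds as classical (referring to \cite{LS}) and for \eqref{esrho} carries out exactly the $L^p$ energy estimate you describe---differentiate once and twice, multiply by $|\partial^\alpha\rho|^{p-2}\partial^\alpha\rho$, use $\operatorname{div} v=0$ to kill the transport term, apply H\"older and the Sobolev embedding $W^{1,p}\hookrightarrow L^\infty$, then Gronwall. Your remark on the need for a regularization/commutator step to justify the formal computations is a rigor point the paper leaves implicit.
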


\begin{proof}

The first two estimates are classical and can be consulted in \cite{LS}. For the last one, notice that
    \begin{equation*}
        \dt\left(\frac{\partial}{\partial_{x_i}} \rho\right)=\frac{\partial}{\partial_{x_i}}\left(\dt\rho\right)=-\frac{\partial}{\partial_{x_i}}\left( v \cdot \nabla \rho\right)=-\left(v_{x_i} \cdot \nabla \rho+v \cdot \nabla \rho_{x_i}\right),
    \end{equation*}
  and   then 
\begin{align*}
  \frac{1}{p}\dt|\dxi\rho|^p&=|\dxi\rho|^{p-1}sgn(\dxi\rho)\dt\left
 (\frac{\partial}{\dxi}\rho\right)\\ 
 &=-|\dxi\rho|^{p-1}sgn(\dxi\rho)\left(v_{x_i} \cdot \nabla \rho+v \cdot \nabla \rho_{x_i}\right).
\end{align*}
After integrating on $\RR^3$, the corresponding second term vanishes using that $\text{div }v=0$,  and we can estimate the first derivative as follows using the Sobolev embedding
\begin{align*}
    \frac{1}{p}\dt\|\dxi\rho(t)\|^p_{L^p}&=-\int_{\RR^3}|\dxi\rho|^{p-1}sgn(\dxi\rho) v_{x_i}\cdot \nabla\rho\,dx\\
    &\hspace{0.42cm}-\int_{\RR^3}|\dxi\rho|^{p-1}sgn(\dxi\rho) v\cdot \nabla\rho_{x_i}\,dx\\
    &\leq  \supx|v_{x_i}| \int_{\RR^3}|\dxi\rho|^{p-1}|\nabla \rho|\,dx\\
    &\leq  \|v(t)\|_{2,p}  \|\nabla \rho(t) \|^p_{1,p}.
\end{align*}
Now, the second derivative fulfills 
\begin{align*}
  \dt \left(\frac{\partial^2}{\dxi\dxj}\rho\right)&= \frac{\partial^2}{\dxi\dxj}\left(\dt\rho\right)=-\frac{\partial}{\dxj}\left(v_{x_i} \cdot \nabla \rho+v \cdot \nabla \rho_{x_i}\right)\\
  &=-\left(v_\xij \cdot \nabla \rho+v_\xxi
  \cdot \nabla \rho_\xxj+v_\xxj\cdot\nabla\rho_\xxi+v\cdot\nabla\rho_\xij\right),
\end{align*}
then
\begin{align*}
\frac{1}{p}\dt|\partial_\xij\rho|^p&=|\partial_\xij\rho|^{p-1}sgn(\partial_\xij\rho)\dt\left(\frac{\partial^2}{\dxi\dxj}\rho\right)\\  
 &=-  |\partial_\xij\rho|^{p-1}sgn(\partial_\xij\rho)\left(v_\xij \cdot \nabla \rho+v_\xxi
  \cdot \nabla \rho_\xxj+v_\xxj\cdot\nabla\rho_\xxi+v\cdot\nabla\rho_\xij\right).
\end{align*}
Again, integrating on $\RR^3$ and using Hölder's inequality  we have
\begin{align*}
    \frac{1}{p}\dt\|\partial_\xij\rho(t)\|^p&\leq  \int_{\RR^3} |\partial_\xij\rho|^{p-1}|v_\xij|  |\nabla \rho|\,dx\\
    &+ \int_{\RR^3} |\partial_\xij\rho|^{p-1}|v_\xxi|
  | \nabla \rho_\xxj|\,dx\\
  &+ \int_{\RR^3} |\partial_\xij\rho|^{p-1}|v_\xxj||\nabla\rho_\xxi| \,dx\\
  &\leq   \supx |\nabla \rho|\left( \int_{\RR^3}|\partial_\xij\rho|^{(p-1)\frac{p}{p-1}}\,dx \right)^{\frac{p-1}{p}}\left( \int_{\RR^3}|v_\xij|^p\,dx \right)^{\frac{1}{p}}\\
  &+  \supx |v_\xxi|\left( \int_{\RR^3}|\partial_\xij\rho|^{(p-1)\frac{p}{p-1}}\,dx \right)^{\frac{p-1}{p}}\left( \int_{\RR^3}|\nabla \rho_\xxj|^p\,dx \right)^{\frac{1}{p}}\\
  &+    \supx |v_\xxj|\left( \int_{\RR^3}|\partial_\xij\rho|^{(p-1)\frac{p}{p-1}}\,dx \right)^{\frac{p-1}{p}}\left( \int_{\RR^3}|\nabla \rho_\xxi|^p\,dx \right)^{\frac{1}{p}}.\\
\end{align*}
As a consequence of the the Sobolev embedding  we get 
\begin{align*}
 \frac{1}{p}\dt\|\partial_\xij\rho(t)\|^p\leq  3\|\nabla\rho(t)\|^p_{1,p} \|v(t)\|_{2,p}. 
\end{align*}
Finally,  Gronwall's inequality implies our claim.

\end{proof}

\subsection{ A priori estimates for elliptic equations.}

Since it is assumed $\text{div }v=\text{div }\Tilde{v}=0$,  if we denote $\z(t)=\text{exp}\left( \W(t)\right)$ and apply the divergence operator in the  equations  (\ref{eqvtilde}) and (\ref{eq0bi}), then we get the random elliptic equations described in the following propositions.

\begin{proposition}\label{propp}
    Let $\rho\in C^{1,1}([0,T]\times\RR^3)$ such that $m\leq \rho\leq M<\infty$ and $\nabla\rho\in C([0,T];W^{1,p}(\RR^3))$. Also assume that $\Tilde{v}\in  C([0,T];W^{2,p}(\RR^3))$ and $\text{div } v=0$. Then, the equation
\begin{equation*}\label{eq2bi}
\text{div}( \rho^{-1}   \nabla \pi)=-\z^{-2}\sum_{i, j=1}^3 \, \Tilde{v}^i_{x_j}\Tilde{v}^j_{x_i},
\end{equation*}
has a unique solution $\nabla \pi\in C([0,T];W^{2,p}(\RR^3))$, satisfying
\begin{equation*}
    \|\nabla \pi(t)\|_{2,p}\leq \z^{-2}(t )K(\|\nabla\rho(t)\|_{1,p})  \|\Tilde{v}(t)\|^2_{2,p} .
\end{equation*}
where $K$ is a non-decreasing function of $\|\nabla\rho(t)\|_{1,p}$, depending on $m$ and $M$.
\end{proposition}

\begin{proof}
The proof directly follows from the application of \cite[Lemmas 3.1]{ITO} considering $v=\ie \Tilde{v}$.
\end{proof}

\begin{proposition}\label{proppbi}
    Let $\rho\in C^{1,1}([0,T]\times\RR^3)$ such that $m\leq \rho\leq M<\infty$ and $\nabla\rho\in C([0,T];W^{1,p}(\RR^3))$. Also assume that $\Tilde{v}\in  C([0,T];W^{2,p}(\RR^3))$ and $\text{div } v=0$. Then, the equation
\begin{equation*}\label{eq2}
\text{div}( \rho^{-1}   \nabla \pi)=-\sum_{i, j=1}^3 \, v^i_{x_j} v^j_{x_i},
\end{equation*}
has a unique solution $\nabla \pi\in C([0,T];W^{2,p}(\RR^3))$, satisfying
\begin{equation*}
    \|\nabla \pi(t)\|_{2,p}\leq K(\|\nabla\rho(t)\|_{1,p})  \|v(t)\|^2_{2,p}.
\end{equation*}
where $K$ is a non-decreasing function of $\|\nabla\rho(t)\|_{1,p}$, depending on $m$ and $M$.
\end{proposition}
\begin{proof}
The proof directly follows from the application of \cite[Lemmas 3.1]{ITO}. 
\end{proof}

\begin{remark}
Notice that, due to the specific structure of the noise, we can construct solutions in which the pressure is a scalar field of finite variation. Applying the divergence operator to equation \eqref{eq0} and \eqref{eq0bi} and using the incompressibility condition 
div $v=0$ and div $\W^Q=0$  ensures that the pressure has no martingale component.

\end{remark}

\subsection{ A priori estimates for linear Euler equation.}

Returning to the analysis of the Euler equations, we will consider a slight modification of equations  \eqref{eqvtilde} and \eqref{eq0bi}  that enables us to deal with a random linear equation in a new variable $u$, where our original variables $\rho$, $\pi$, $\Tilde{v}$, and $v$ are fixed.

\begin{proposition}\label{propu}
   Let $\rho$ and $\nabla \pi$ be the unique solutions described in Propositions \ref{proprho} and \ref{propp}, respectively. Also, let $\Tilde{v} \in C([0,T];W^{2,p}(\RR^3))$ with $\text{div }\Tilde{v}=0$. Then, the problem
\begin{equation}\label{eq3}
\begin{cases}
 u_t + \ie(\Tilde{v}\cdot \nabla) u+ \z\frac{\nabla \pi}{\rho}=0 \\
u|_{t=0} =v_0(x), &
\end{cases}
\end{equation}
has a unique solution $u\in C([0,T];W^{2,p}(\RR^3))$. Moreover, $u$ satisfies
\begin{equation*}
    \|u(t)\|_{2,p}\leq \text{exp}\left( c_2\intt \ie(s)\|\Tilde{v}(s)\|_{2,p}\,ds\right)\left[\|v_0\|_{2,p}+ \intt \ie(s) L(\|\nabla \rho(s)\|_{1,p})\| \Tilde{v}(s)\|^2_{2,p}\,ds\right].
\end{equation*}

\end{proposition}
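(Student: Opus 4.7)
The equation \eqref{eq3} is linear in $u$ with fixed coefficients: a divergence-free drift $\ie\Tilde{v}$ and a forcing term $-\z\,\nabla p/\rho$. Both are under control from Propositions \ref{proprho} and \ref{propp}, and the drift $\Tilde{v}\in C([0,T];W^{2,p}(\RR^3))$ embeds, via the assumption $p>3$, into $C([0,T];C^{1}(\RR^3))$ with Lipschitz spatial gradient. Hence the characteristic ODE
\[
\dot\psi(t,x)=\ie(t)\Tilde{v}(t,\psi(t,x)),\qquad \psi(0,x)=x,
\]
admits a unique global flow $\psi\in C([0,T];C^1(\RR^3;\RR^3))$, which is a volume-preserving diffeomorphism since $\text{div }\Tilde{v}=0$. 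Existence and uniqueness of a solution $u\in C([0,T];W^{2,p}(\RR^3))$ then follow from the classical representation
\[
u(t,\psi(t,x))=v_0(x)-\int_0^t \z(s)\frac{\nabla p}{\rho}(s,\psi(s,x))\,ds,
\]
together with the fact that composition with $\psi(t,\cdot)^{-1}$ preserves $W^{2,p}$ regularity under the stated hypotheses.

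For the quantitative bound I would run an energy computation in $L^p$ modeled on the proof of \eqref{esrho}. Testing \eqref{eq3} against $|u|^{p-1}\mathrm{sgn}(u)$ and integrating over $\RR^3$ kills the transport term by incompressibility and yields $\frac{d}{dt}\|u\|_{L^p}\leq \z\,\|\nabla p/\rho\|_{L^p}$. Differentiating \eqref{eq3} once and then twice in space, testing each time against $|\partial^{\alpha}u|^{p-1}\mathrm{sgn}(\partial^{\alpha}u)$, and integrating, the highest-order transport term again vanishes, while each commutator term of the form $\ie(\partial^{\beta}\Tilde{v})\cdot\nabla(\partial^{\gamma}u)$ with $|\beta|+|\gamma|\leq 3$ is controlled via Hölder's inequality and the embedding $W^{2,p}(\RR^3)\hookrightarrow L^{\infty}(\RR^3)$ (valid because $p>3$) by $\ie\|\Tilde{v}\|_{2,p}\|u\|_{2,p}^{p}$. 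Collecting the three orders gives
\[
\frac{d}{dt}\|u(t)\|_{2,p}\leq c_2\,\ie(t)\,\|\Tilde{v}(t)\|_{2,p}\,\|u(t)\|_{2,p}+\z(t)\left\|\frac{\nabla p}{\rho}(t)\right\|_{2,p}.
\]

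To close the estimate I would bound the pressure forcing in $W^{2,p}$. Since $p>3$, $W^{2,p}(\RR^3)$ is a Banach algebra; combined with the lower bound $\rho\geq m$ and a Moser-type estimate for $\rho^{-1}$, this yields $\|\nabla p/\rho\|_{2,p}\leq \widetilde L(\|\nabla\rho\|_{1,p})\,\|\nabla p\|_{2,p}$ for some non-decreasing $\widetilde L$ depending on $m,M$. Inserting the bound of Proposition \ref{propp} converts this into
\[
\z(t)\left\|\frac{\nabla p}{\rho}(t)\right\|_{2,p}\leq \ie(t)\,L(\|\nabla\rho(t)\|_{1,p})\,\|\Tilde{v}(t)\|_{2,p}^{2},
\]
after absorbing the factor $K$ from Proposition \ref{propp} into $L$. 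Substituting into the differential inequality and applying Gronwall's lemma produces exactly the stated bound.

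The main obstacle I anticipate is technical rather than conceptual: one has to handle all commutator contributions from the first and second spatial derivatives in a uniform fashion, and one has to justify the nonlinear $W^{2,p}$ estimate for $1/\rho$. The latter is where the threshold $p>3$ is essential, both to use the algebra property and to pass from the one-derivative bound on $\rho$ to a bound on $\rho^{-1}$ in $W^{2,p}$ via the Faà di Bruno / Moser-calculus composition rule, using the uniform lower bound $\rho\geq m$. Once these algebraic estimates are in hand, the remainder of the argument is a direct linear transport estimate plus Gronwall, exactly in the spirit of Proposition \ref{proprho2}.
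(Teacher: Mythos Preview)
Your proposal is correct and follows essentially the same route as the paper: existence via characteristics, then $L^p$ energy estimates for $u$, $\nabla u$, and $D^2u$ obtained by testing against $|\partial^\alpha u|^{p-1}\mathrm{sgn}(\partial^\alpha u)$, using $\text{div}\,\Tilde{v}=0$ to kill the top-order transport term, Sobolev embeddings for the commutators, Proposition~\ref{propp} for the pressure, and Gronwall to close. The only cosmetic difference is that the paper expands all Leibniz terms of $\partial^\alpha(\nabla p/\rho)$ by hand and bounds them one by one, whereas you invoke the algebra property of $W^{2,p}$ together with a Moser-type composition estimate for $\rho^{-1}$; since $\rho\notin L^p$ you should phrase this last step carefully (what you actually use is $\rho^{-1}\in L^\infty$ and $\nabla(\rho^{-1})\in W^{1,p}\cap L^\infty$, which follows from $m\le\rho\le M$ and $\nabla\rho\in W^{1,p}$ with $p>3$), but the substance is identical.
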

\begin{proof}
The well-posedness of this problem comes from a classical result. 
We proceed to prove the corresponding estimate. We do this in several steps and following the same ideas of the Proposition \ref{proprho2}:
\\ 
\noindent{\it \underline{Step 1}: Transport equation.} If we consider $u=(u^1, u^2, u^3)$ we can rewritten the Eq. \eqref{eq3} componentwise by:
\begin{align*}
\left\{
\begin{aligned}
& u^{i}_t + \ie\Tilde{v}\cdot \nabla u^i= -\z \ \frac{ \pi_{x_i}}{\rho} \\
& u^i|_{t=0} =v^i_0(x), 
\end{aligned}
\,,\quad \text{ with }\quad i=1,2,3.
\right.
\end{align*}
\\ 
\noindent{\it \underline{Step 2}: Estimate of $\displaystyle \frac{1}{p}\frac{d}{dt}\|u(t)\|_{L^p}^p$.} Here we also proceed componentwise. Thus, for $i=1$, we have,
\begin{align*}
    \frac{1}{p}\dt|u^1|^p&
    =-\ie|u^1|^{p-1} sgn(u^1) \Tilde{v}\cdot\nabla u^1
    -\z \rho^{-1}|u^1|^{p-1} sgn(u^1)\pi_{x_1}.
\end{align*}
With the same computation as in the proof of Proposition \ref{proprho2}, since $\text{div }\tilde{v}=0$ we get
\begin{align*}
    \frac{1}{p}\dt \|u^1(t)\|_{L^p}^p & \leq m^{-1}\z(t)\|u^1(t)\|^{p-1}_{L^p}\|\pi_{x_1}(t) \|_{L^p}\\
    & \leq m^{-1}\z(t)\|u(t)\|^{p-1}_{2,p}\|\nabla \pi(t) \|_{2,p}.
\end{align*}
And by Proposition \ref{propp} we obtain
\begin{align*}
    \frac{1}{p}\dt \|u^1(t)\|_{L^p}^p & \leq m^{-1}\ie(t) K(\|\nabla\rho(t)\|_{1,p})\|u(t)\|^{p-1}_{2,p}\|\Tilde{v}(t)\|^2_{2,p}.
\end{align*}
Now, observe that the same estimation as above applies to $u^2(t)$ and $u^3(t)$. Consequently, it yields
\begin{align} \label{P16-eq2}
    \frac{1}{p}\frac{d}{dt}\|u(t)\|_{L^p}^p \leq 3m^{-1} \ie(t) K(\|\nabla\rho(t)\|_{1,p}) \|u(t)\|^{p-1}_{2,p} \|\Tilde{v}(t)\|^2_{2,p}.
\end{align}
\\ 
\noindent{\it \underline{Step 3}: Estimate of $\displaystyle \frac{1}{p}\frac{d}{dt}\|\nabla u(t)\|_{L^p}^p$.} Now, regarding the first derivative of each component of $u$, we have:
\begin{equation*}
    \dt u^1_{x_i}=- \ie\Tilde{v}_{x_i}\cdot \nabla u^1-\ie \Tilde{v}\cdot \nabla u^1_{x_i}+\z\rho^{-2} \rho_{x_i} \pi_{x_1}- \z\rho^{-1} \pi_{x_1 x_i} \,,
\end{equation*}
which let us deduce that
\begin{align*}
    \frac{1}{p}\dt|u^1_{x_i}|^p&= |u^1_{x_i}|^{p-1} sgn(u^1_{x_i}) \dt u^1_\xxi\\
    &=-\ie |u^1_{x_i}|^{p-1} sgn(u^1_{x_i}) \Tilde{v}_{x_i}\cdot \nabla u^1 -\ie |u^1_{x_i}|^{p-1} sgn(u^1_{x_i}) \Tilde{v} \cdot \nabla u^1_{x_i}\\
    &\quad+ \frac{\z}{\rho^2}\ |u^1_{x_i}|^{p-1} sgn (u^1_{x_i}) \, \rho_{x_i} \ \pi_{x_1}
    -\frac{\z}{\rho} |u^1_{x_i}|^{p-1} sgn(u^1_{x_i})  \ \pi_{x_1 x_i}.
\end{align*}
Then,
\begin{align}\label{P7-eq2}
     \frac{1}{p} \dt\|u^1_{x_i}(t)\|^p_{L^p}& = -\ie\int_{\RR^3} |u^1_{x_i}|^{p-1} sgn(u^1_{x_i}) \Tilde{v}_{x_i}\cdot \nabla u^1 \ dx\nonumber\\
     & -\ie \int_{\RR^3} |u^1_{x_i}|^{p-1} sgn(u^1_{x_i}) \Tilde{v} \cdot \nabla u^1_{x_i}  \ dx\nonumber\\
     & + \z \int_{\RR^3} |u^1_{x_i}|^{p-1} sgn (u^1_{x_i}) \, \rho^{-2} \ \rho_{x_i} \ \pi_{x_1} \ dx\nonumber\\
     & -\z \int_{\RR^3} |u^1_{x_i}|^{p-1} sgn(u^1_{x_i})  \ \rho^{-1} \ \pi_{x_1 x_i} \ dx \nonumber\\
     & = A_1 + A_2+A_3+A_4\,.
  \end{align}
Since $div \ \Tilde{v}=0$, we have $A_2=0$. We also see that
\begin{align*}
A_1 & \leq \ie\int_{\RR^3} |u^1_{x_i}|^{p-1} |\Tilde{v}_{x_i}| \  |\nabla u^1 | \ dx \leq \ie \sup_{x\in\RR^3} |\Tilde{v}_{x_i}|\int_{\RR^3}|u^1_{x_i}|^{p-1} | \nabla u^1| \,dx\\
& \leq \ie \|\Tilde{v}(t)\|_{2,p}\|u^1_{x_i}(t)\|^{p-1}_{L^p} \|\nabla u^1(t)\|_{L^p}\\
& \leq \ie \|\Tilde{v}(t)\|_{2,p}\|u(t)\|^{p}_{2,p}. 
\end{align*}
With the same idea as before and considering the Proposition \ref{propp}, we obtain
\begin{align*}
A_3 & \leq \z m^{-2} \int_{\RR^3} |u^1_{x_i}|^{p-1} \ |\rho_{x_i}| \ |\pi_{x_1}| \ dx \leq \z m^{-2} \sup_{x\in\RR^3 }|\rho_{x_i}|\int_{\RR^3}|u^1_{x_i}|^{p-1} \ |\pi_{x_1}|\,dx\\
& \leq \z m^{-2}\|\nabla \rho(t)\|_{1,p}\|u^1_{x_i}(t)\|^{p-1}_{L^p}\|\pi_{x_1}(t)\|_{L^p}\leq \z m^{-2}\|\nabla \rho(t)\|_{1,p}\|u(t)\|^{p-1}_{p,2}\|\nabla \pi(t)\|_{2,p}\\
& \leq \ie m^{-2} K(\|\nabla\rho(t)\|_{1,p}) \|\nabla \rho(t)\|_{1,p}\|u(t)\|^{p-1}_{p,2} \|\Tilde{v}(t)\|^2_{2,p}\,,
\end{align*}
and
\begin{align*}
A_4 & \leq \z m^{-1}\int_{\RR^3} |u^1_{x_i}|^{p-1} \ |\pi_{x_1 x_i}| \ dx\leq \z m^{-1} \|u^1_{x_i}(t)\|^{p-1}_{L^p}\|\pi_{x_1 x_i}(t)\|_{L^p}\\
& \leq \z m^{-1} \|u(t)\|^{p-1}_{p,2}\|\nabla \pi(t)\|_{2,p}\\
& \leq \ie m^{-1} K(\|\nabla\rho(t)\|_{1,p}) \|u(t)\|^{p-1}_{p,2} \|\Tilde{v}(t)\|^2_{2,p}\,.
\end{align*}
Thereby, by replacing the estimates of $A_1, A_2, A_3$ and $A_4$ in \eqref{P7-eq2}, we get
\begin{align*}
\frac{1}{p} \dt\|u^1_{x_i}(t)\|^p_{L^p} & \leq \ie \|\Tilde{v}(t)\|_{2,p}\|u(t)\|^{p}_{2,p}\\
& +\ie L_1(\|\nabla\rho(t)\|_{1,p})\|u(t)\|^{p-1}_{p,2} \|\Tilde{v}(t)\|^2_{2,p}\,,
\end{align*}
where
$$\Tilde{L}_1(\|\nabla\rho(t)\|_{1,p})=K(\|\nabla\rho(t)\|_{1,p})(m^{-2}\|\nabla \rho(t)\|_{1,p}+m^{-1})\,.$$
It is clear that the above estimate also holds to $u^2_{x_i}(t)$ and $u^3_{x_i}(t)$. Then,
\begin{align}\label{P17-eq1}
\frac{1}{p}\frac{d}{dt}\|\nabla u(t)\|_{L^p}^p & \leq C \ie \left[\|\Tilde{v}(t)\|_{2,p}\|u(t)\|^{p}_{2,p}\right.\nonumber\\
& +\left. L_1(\|\nabla\rho(t)\|_{1,p})\|u(t)\|^{p-1}_{p,2} \|\Tilde{v}(t)\|^2_{2,p}\right]\,.
\end{align}
\\ 
\noindent{\it \underline{Step 4}: Estimate of $\displaystyle \frac{1}{p}\frac{d}{dt}\|D^2 u(t)\|_{L^p}^p$.} Once again, by doing the calculations for the second derivative componentwise, we get:
\begin{align*}
      \frac{1}{p}\dt |u^1_{x_i x_j}|^p & =|u^1_\xij|^{p-1}sgn(u^1_\xij)\dt u^1_\xij\\
      & =-\ie |u^1_\xij|^{p-1}sgn(u^1_\xij) \left(-\Tilde{v}_{x_i x_j}\cdot \nabla u^1 + \Tilde{v}_{x_i}\cdot\nabla u^1_{x_j}+\Tilde{v}_{x_j}\cdot \nabla u^1_{x_i} +\Tilde{v}\cdot \nabla u^1_{x_i x_j}\right)\\
      &\quad+ \z|u^1_\xij|^{p-1}sgn(u^1_\xij)\left(-\frac{2}{\rho^3}\rho_{x_i}\rho_{x_j} \pi_{x_1}+\frac{1}{\rho^2}\rho_{x_i x_j} \pi_{x_1}
      +\frac{1}{\rho^2}\rho_{x_i} \pi_{x_1 x_j}\right.\\
      &\quad \left.+\frac{1}{\rho^2}\rho_{x_j}\pi_{x_1 x_i}-\frac{1}{\rho}\pi_{x_1 x_i x_j}\right).
\end{align*}
When integrating over $\mathbb{R}^3$, it yields
\begin{align}\label{P11-eq1}
\frac{1}{p}\dt\|u^1_\xij(t)\|_{L^p}^p & \leq -\ie\int_{\mathbb{R}^3} |u^1_\xij|^{p-1}sgn(u^1_\xij) \Tilde{v}_{x_i x_j}\cdot \nabla u^1 \ dx\nonumber\\ 
& \quad -\ie \int_{\mathbb{R}^3} |u^1_\xij|^{p-1}sgn(u^1_\xij) \Tilde{v}_{x_i}\cdot\nabla u^1_{x_j} \ dx \nonumber\\ 
& \quad -\ie \int_{\mathbb{R}^3} |u^1_\xij|^{p-1}sgn(u^1_\xij) \Tilde{v}_{x_j}\cdot \nabla u^1_{x_i} \ dx\nonumber\\ 
& \quad -\ie \int_{\mathbb{R}^3} |u^1_\xij|^{p-1}sgn(u^1_\xij) \Tilde{v}\cdot \nabla u^1_{x_i x_j} \ dx\nonumber\\ 
& \quad -2 \z  \int_{\mathbb{R}^3} |u^1_\xij|^{p-1}sgn(u^1_\xij) \frac{1}{\rho^3}\rho_{x_i}\rho_{x_j} \pi_{x_1} \ dx\nonumber\\ 
& \quad +\z \int_{\mathbb{R}^3} |u^1_\xij|^{p-1}sgn(u^1_\xij)\frac{1}{\rho^2}\rho_{x_i x_j} \pi_{x_1} \ dx \nonumber\\ 
& \quad +\z \int_{\mathbb{R}^3} |u^1_\xij|^{p-1}sgn(u^1_\xij) \frac{1}{\rho^2}\rho_{x_i} \pi_{x_1 x_j} \ dx\nonumber\\ 
& \quad +\z \int_{\mathbb{R}^3} |u^1_\xij|^{p-1}sgn(u^1_\xij) \frac{1}{\rho^2}\rho_{x_j}\pi_{x_1 x_i} \ dx\nonumber\\ 
& \quad -\z \int_{\mathbb{R}^3} |u^1_\xij|^{p-1}sgn(u^1_\xij) \frac{1}{\rho}\pi_{x_1 x_i x_j} \ dx\nonumber\\ 
& = B_1+B_2+B_3+B_4+B_5+B_6+B_7+B_8+B_9\,.
\end{align}
It is not difficult to show that $B_4=0$ because $div \ \Tilde{v}=0$. It is also easy to see that:
\begin{align*}
B_1 & \leq \ie\int_{\mathbb{R}^3} |u^1_\xij|^{p-1} |\Tilde{v}_{x_i x_j}|  \  |\nabla u^1| \ dx \leq \ie \supx|\nabla u^1|\intr |u^1_\xij|^{p-1} |\Tilde{v}_{x_i x_j}| \,dx\\
& \leq \ie \|\Tilde{v}_\xij(t)\|_{L^p}\|u(t)\|^p_{2,p}\leq \ie \|\Tilde{v}(t)\|_{2,p}\|u(t)\|^p_{2,p}.
\end{align*}
Also,
\begin{align*}
B_2 & \leq \ie \int_{\mathbb{R}^3} |u^1_\xij|^{p-1} |\Tilde{v}_{x_i}| \ |\nabla u^1_{x_j}| \ dx \leq \ \ie\supx|\Tilde{v}_{x_i}|\intr |u^1_\xij|^{p-1}|\nabla u^1_{x_j}|\,dx\\
& \leq  \ \ie\|\Tilde{v}(t)\|_{2,p}\|u(t)\|^p_{2,p}.
\end{align*}
The estimation of $B_3$ is the same of $B_2$. Now, see that
\begin{align*}
B_5 & \leq 2 \z m^{-3} \int_{\mathbb{R}^3} |u^1_\xij|^{p-1} |\rho_{x_i}| \ |\rho_{x_j}| \  |\pi_{x_1}| \ dx\\
& \leq 2 \z m^{-3} \left(\supx|\rho_{x_i}|\right)\left(\supx|\rho_{x_j}|\right)\intr |u^1_\xij|^{p-1}|\pi_{x_1}|\,dx\\
& \leq 2 \z m^{-3} \|\nabla \rho(t)\|^2_{1,p}\|u^1_\xij(t)\|^{p-1}_{L^p} \|\pi_{x_1}(t)\|_{L^p}\\
& \leq 2 \z m^{-3} \|\nabla \rho(t)\|^2_{1,p}\|u(t)\|^{p-1}_{2,p} \|\nabla \pi(t)\|_{2,p}.
\end{align*}
By applying the Proposition \ref{propp} we have
\begin{align*}
B_5 & \leq 2 \ \ie m^{-3} K(\|\nabla\rho(t)\|_{1,p}) \|\nabla \rho(t)\|^2_{1,p}\|u(t)\|^{p-1}_{2,p} \|\Tilde{v}(t)\|^2_{2,p}.
\end{align*}
Using again the Proposition \ref{propp} we can be estimated the term $B_6$ by
\begin{align*}
B_6 & \leq \z m^{-2}\int_{\mathbb{R}^3} |u^1_\xij|^{p-1} |\rho_{x_i x_j}| |\pi_{x_1}| \ dx \leq \z m^{-2} \supx |p_{x_1}|\intr |u^1_\xij|^{p-1}|\rho_{x_i x_j}|\,dx\\
& \leq \z m^{-2} \|\nabla \pi(t)\|_{2,p} \|u^1_\xij(t)\|^{p-1}_{L^p} \ \|\rho_{x_i x_j}(t)\|_{L^p}\\
& \leq \z m^{-2} \|\nabla \pi(t)\|_{2,p} \|u(t)\|^{p-1}_{2,p} \ \|\nabla\rho(t)\|_{1,p}\\
& \leq  \ie m^{-2} K(\|\nabla\rho(t)\|_{1,p}) \ \|\nabla\rho(t)\|_{1,p} \ \|u(t)\|^{p-1}_{2,p} \|\Tilde{v}(t)\|^2_{2,p}\,.
\end{align*}
Repeating the procedure above we have
\begin{align*}
B_7 & \leq \z m^{-2}\int_{\mathbb{R}^3} |u^1_\xij|^{p-1} |\rho_{x_i}| \ |\pi_{x_1 x_j}| \ dx \leq \z m^{-2} \supx |\rho_{x_i}|\intr |u^1_\xij|^{p-1}|\pi_{x_1 x_j}|\,dx\\
& \leq \z m^{-2} \ \|\nabla\rho(t)\|_{1,p} \|u(t)\|^{p-1}_{2,p} \ \|\nabla \pi(t)\|_{2,p}\\
& \leq \ \ie m^{-2} K(\|\nabla\rho(t)\|_{1,p}) \ \|\nabla\rho(t)\|_{1,p} \ \|u(t)\|^{p-1}_{2,p} \|\Tilde{v}(t)\|^2_{2,p}\,,
\end{align*}
and
\begin{align*}
B_8 & \leq \ \ie m^{-2} K(\|\nabla\rho(t)\|_{1,p}) \ \|\nabla\rho(t)\|_{1,p} \ \|u(t)\|^{p-1}_{2,p} \|\Tilde{v}(t)\|^2_{2,p}\,.
\end{align*}
Finally, observe that
\begin{align*}
B_9 & \leq \z m^{-1} \int_{\mathbb{R}^3} |u^1_\xij|^{p-1} |\pi_{x_1 x_i x_j}| \ dx \leq \z m^{-1}  \|u(t)\|^{p-1}_{2,p} \ \|\nabla \pi(t)\|_{2,p}\\
& \leq \ie m^{-1} K(\|\nabla\rho(t)\|_{1,p})  \|u(t)\|^{p-1}_{2,p} \ \|\Tilde{v}(t)\|^2_{2,p}.
\end{align*}
By substituting the estimates of $B_1, B_2, B_3, B_4, B_5, B_6, B_7, B_8$ and $B_9$ in \eqref{P11-eq1} we obtain 
\begin{align*}
\frac{1}{p}\dt\|u^1_\xij(t)\|_{L^p}^p & \leq 3\ie \|\Tilde{v}(t)\|_{2,p}\|u(t)\|^p_{2,p}\\
& + 2 \ie m^{-3} K(\|\nabla\rho(t)\|_{1,p}) \|\nabla \rho(t)\|^2_{1,p}\|u(t)\|^{p-1}_{2,p} \|\Tilde{v}(t)\|^2_{2,p}\\
& +3\ie m^{-2} K(\|\nabla\rho(t)\|_{1,p}) \ \|\nabla\rho(t)\|_{1,p} \ \|u(t)\|^{p-1}_{2,p} \|\Tilde{v}(t)\|^2_{2,p}\\
& +\ie m^{-1} K(\|\nabla\rho(t)\|_{1,p})  \|u(t)\|^{p-1}_{2,p} \ \|\Tilde{v}(t)\|^2_{2,p}\\
& = 3 \ie \|\Tilde{v}(t)\|_{2,p}\|u(t)\|^p_{2,p}+\ie \Tilde{L}_2(\|\nabla\rho(t)\|_{1,p}) \|u(t)\|^{p-1}_{2,p} \|\Tilde{v}(t)\|^2_{2,p}\,,
\end{align*} 
where
\begin{align*}
\Tilde{L}_2(\|\nabla\rho(t)\|_{1,p}) & =K(\|\nabla\rho(t)\|_{1,p})\Big[2 m^{-3} \|\nabla \rho(t)\|^2_{1,p}+3 m^{-2}  \|\nabla\rho(t)\|_{1,p}+m^{-1}\Big].
\end{align*}
One more time, we see that the above estimate holds also to $u^2_\xij(t)$ and $u^3_\xij(t)$. Therefore, 
\begin{align}\label{P16-eq3}
\frac{1}{p}\frac{d}{dt}\|D^2 u(t)\|_{L^p}^p & \leq C\ie\left(3 \|\Tilde{v}(t)\|_{2,p}\|u(t)\|^p_{2,p}+\Tilde{L}_2(\|\nabla\rho(t)\|_{1,p}) \|u(t)\|^{p-1}_{2,p} \|\Tilde{v}(t)\|^2_{2,p}\right)\,.
\end{align}
\\ 
\noindent{\it \underline{Step 5}: Conclusion.} The steps 2, 3 and 4 yield
\begin{align*}
\frac{1}{p}\frac{d}{dt}\|u(t)\|^p_{2,p} & \leq 3m^{-1} \ie(t) K(\|\nabla\rho(t)\|_{1,p}) \|u(t)\|^{p-1}_{2,p} \|\Tilde{v}(t)\|^2_{2,p}\\
& +C \ie \left[\|\Tilde{v}(t)\|_{2,p}\|u(t)\|^{p}_{2,p}+\Tilde{L}_1(\|\nabla\rho(t)\|_{1,p})\|u(t)\|^{p-1}_{p,2} \|\Tilde{v}(t)\|^2_{2,p}\right]\\
& +C\ie\left(3 \|\Tilde{v}(t)\|_{2,p}\|u(t)\|^p_{2,p}+\Tilde{L}_2(\|\nabla\rho(t)\|_{1,p}) \|u(t)\|^{p-1}_{2,p} \|\Tilde{v}(t)\|^2_{2,p}\right)\\
& \leq C \ie \|\Tilde{v}(t)\|_{2,p}\|u(t)\|^{p}_{2,p} +C\ie L(\|\nabla\rho(t)\|_{1,p}) \|u(t)\|^{p-1}_{2,p} \|\Tilde{v}(t)\|^2_{2,p}\,,
\end{align*}
with $L$ defined by
\begin{align*}
L(\|\nabla\rho(t)\|_{1,p})=3m^{-1}  K(\|\nabla\rho(t)\|_{1,p})+\Tilde{L}_1(\|\nabla\rho(t)\|_{1,p})+\Tilde{L}_2(\|\nabla\rho(t)\|_{1,p}).
\end{align*}
Therefore, 
\begin{align*}
\frac{d}{dt}\|u(t)\|_{2,p}\leq c_2 \left[\ie \|\Tilde{v}(t)\|_{2,p}\|u(t)\|_{2,p}+\ie L(\|\nabla\rho(t)\|_{1,p})  \|\Tilde{v}(t)\|^2_{2,p}\right].
\end{align*}
By Gronwall's inequality our desired estimate holds.

\end{proof}

\begin{proposition}\label{propubi}
   Let $\rho$ and $\nabla \pi$ be the unique solutions described in Propositions \ref{proprho} and \ref{proppbi}, respectively. Also, let $v \in C([0,T];W^{2,p}(\RR^3))$ with $\text{div }v=0$. Then, the problem
\begin{equation}\label{eq3bi}
\begin{cases}
 u_t + (v\cdot \nabla) u  
 + (v\cdot \nabla) \W^{Q} 
 + \frac{\nabla \pi}{\rho}=0 \\
u|_{t=0} =v_0(x), &
\end{cases}
\end{equation}
has a unique solution $u\in C([0,T];W^{2,p}(\RR^3))$. Moreover, $u$ satisfies
\[
\|u(t)\|_{2,p}\leq \text{exp}\left( c_2\intt \|v(s)\|_{2,p}  ds\right) \times
 \]   
   
    \[
    \left[\|v_0\|_{2,p}+ \intt  L(\|\nabla \rho(s)\|_{1,p})\| v(s)\|^2_{2,p}  + \| v(s)\|_{2,p}\|\W(s)^{Q}\|_{k,2} \,ds\right].
\]

\end{proposition}

\begin{proof}The proof follows the same ideas as proposition  \ref{propu}, we omit  some parts of the proof. For simplicity, we denote $\W^Q$ instead of $\W^Q_i$ for $i=1,2,3.$

\noindent{\it \underline{Step 1}: Transport equation.} If we consider $u=(u^1, u^2, u^3)$ we can rewritten the Eq. \eqref{eq3bi} componentwise by:
\begin{align*}
\left\{
\begin{aligned}
& u^{i}_t + v\cdot \nabla u^i
 + (v\cdot \nabla) \W^{Q}
=- \  \frac{ \pi_{x_i}}{\rho}      \\
& u^i|_{t=0} =v^i_0(x), 
\end{aligned}
\,,\quad \text{ with }\quad i=1,2,3.
\right.
\end{align*}

\noindent{\it \underline{Step 2}:   We will estimated $\displaystyle \frac{1}{p}\frac{d}{dt}\|u(t)\|_{L^p}^p$.} Here we also proceed componentwise. Thus, for $i=1$, we obtain,

With the  similar calculation  as in the proof of Proposition \ref{propu}, we deduce that 

\begin{align*}
    \frac{1}{p}\dt \|u^1(t)\|_{L^p}^p & \leq m^{-1}  \|u^1(t)\|^{p-1}_{L^p}\|\pi_{x_1}(t) \|_{L^p}  
    + \|u^1(t)\|^{p-1}_{L^p} \|v(t)\|_{L^p}  \|\W^Q\|_{k,2}
  \\  & 
  \leq  m^{-1}  \|u^1(t)\|^{p-1}_{L^p}\|\pi_{x_1}(t) \|_{2,p}  
    + \|u^1(t)\|^{p-1}_{L^p} \|v(t)\|_{L^p}  \|\W^Q\|_{k,2}.
\end{align*}
And by Proposition \ref{proppbi} we obtain
\begin{align*}
    \frac{1}{p}\dt \|u^1(t)\|_{L^p}^p & \leq m^{-1} K(\|\nabla\rho(t)\|_{1,p})\|u(t)\|^{p-1}_{2,p} \|v(t)\|^2_{2,p}  
    \\  & 
    +  \|u^1(t)\|^{p-1}_{L^p} \|v(t)\|_{L^p}  \|\W^Q\|_{k,2} .
\end{align*}
Now, observe that the same estimation as above applies to $u^2(t)$ and $u^3(t)$. Then we have

\begin{align} \label{P16-eq2bi}
    \frac{1}{p}\frac{d}{dt}\|u(t)\|_{L^p}^p  
\leq m^{-1} K(\|\nabla\rho(t)\|_{1,p})\|u(t)\|^{p-1}_{2,p} \|v(t)\|^2_{2,p}  \nonumber
    \\  + \|u(t)\|^{p-1}_{L^p} \|v(t)\|_{L^p}  \|\W^Q\|_{k,2}.
\end{align}
\\ 
\noindent{\it \underline{Step 3}: Estimate of $\displaystyle \frac{1}{p}\frac{d}{dt}\|\nabla u(t)\|_{L^p}^p$.} Now, regarding the first derivative of each component of $u$, we obtain :

\[
    \dt u^1_{x_i}=- v_{x_i}\cdot \nabla u^1- v\cdot \nabla u^1_{x_i}+ \rho^{-2} \rho_{x_i} \pi_{x_1}- \rho^{-1} \pi_{x_1 x_i} 
\]

  \[
     - v_{x_i}  \cdot \nabla \W^{Q} 
     - v \cdot \nabla \W^{Q}_{x_i} \,,
 \]

thus we have 
\begin{align*}
    \frac{1}{p}\dt|u^1_{x_i}|^p&= |u^1_{x_i}|^{p-1} sgn(u^1_{x_i}) \dt u^1_\xxi\\
    &=- |u^1_{x_i}|^{p-1} sgn(u^1_{x_i}) v_{x_i}\cdot \nabla u^1 -|u^1_{x_i}|^{p-1} sgn(u^1_{x_i}) v \cdot \nabla u^1_{x_i}\\
    &\quad+ \frac{1}{\rho^2}\ |u^1_{x_i}|^{p-1} sgn (u^1_{x_i}) \, \rho_{x_i} \ \pi_{x_1}
    -\frac{1}{\rho} |u^1_{x_i}|^{p-1} sgn(u^1_{x_i})  \ \pi_{x_1 x_i}\\
    &\quad  -   |u^1_{x_i}|^{p-1} sgn(u^1_{x_i})  v_{x_i} \cdot \nabla \W^{Q}
    - |u^1_{x_i}|^{p-1} sgn(u^1_{x_i})   v \cdot \nabla \W^{Q}_{x_i}.  
  \end{align*}

Therefore, we deduce 
\begin{align}\label{P7-eq2bi}
     \frac{1}{p} \dt\|u^1_{x_i}(t)\|^p_{L^p}& = -\int_{\RR^3} |u^1_{x_i}|^{p-1} sgn(u^1_{x_i}) v_{x_i}\cdot \nabla u^1 \ dx\nonumber\\
     & - \int_{\RR^3} |u^1_{x_i}|^{p-1} sgn(u^1_{x_i}) v \cdot \nabla u^1_{x_i}  \ dx\nonumber\\
     & +  \int_{\RR^3} |u^1_{x_i}|^{p-1} sgn (u^1_{x_i}) \, \rho^{-2} \ \rho_{x_i} \ \pi_{x_1} \ dx\nonumber\\
     & -\int_{\RR^3} |u^1_{x_i}|^{p-1} sgn(u^1_{x_i})  \ \rho^{-1} \ \pi_{x_1 x_i} \ dx \nonumber
     \\
    &\quad  - \int_{\RR^3}  |u^1_{x_i}|^{p-1} sgn(u^1_{x_i})  v_{x_i} \cdot \nabla \W^{Q} dx\nonumber
   \\ &
    - \int_{\RR^3} |u^1_{x_i}|^{p-1} sgn(u^1_{x_i})  v \cdot \nabla \W^{Q}_{x_i} dx \nonumber
    \\ & =  \sum_{i=1}^{6}  A_i \,.
  \end{align}

 Arguing  as in step 3 of Proposition \ref{propu} we have 

 \[
 A_{2}=0, 
 \]

\begin{align*}
A_1 & \leq  \|v(t)\|_{2,p}\|u(t)\|^{p}_{2,p},
\end{align*}

\begin{align*}
A_3 & \leq  m^{-2} K(\|\nabla\rho(t)\|_{1,p}) \|\nabla \rho(t)\|_{1,p} \|u(t)\|^{p-1}_{p,2} \|v(t)\|^2_{2,p} \,,
\end{align*}
and

\begin{align*}
A_4 & \leq  m^{-1} K(\|\nabla\rho(t)\|_{1,p}) \|u(t)\|^{p-1}_{p,2}  \|v(t)\|^2_{2,p} \,.
\end{align*}

Now, by H\"older’s inequality, we have 

\[
A_{5}\leq |\int_{\RR^3}  |u^1_{x_i}|^{p-1} sgn(u^1_{x_i})  v_{x_i} \cdot \nabla \W^{Q} dx|
\]

\[
\leq  \|u(t)\|^{p-1}_{2,p}  \|v(t)\|_{2,p}  \|\W^Q \|_{k,2}, 
\]

\[
 A_{6} \leq |\int_{\RR^3} |u^1_{x_i}|^{p-1} sgn(u^1_{x_i})   v \cdot \nabla \W^{Q}_{x_i} dx|
\]

\[
 \leq \|u(t)\|^{p-1}_{2,p}  \|v(t)\|_{2,p}  \|\W^Q \|_{k,2}. 
\]

From (\ref{P7-eq2bi})   and the estimations for $A_{i}$  we deduce  that 

\begin{align*}
\frac{1}{p} \dt\|u^1_{x_i}(t)\|^p_{L^p} & \leq \|v(t)\|_{2,p}\|u(t)\|^{p}_{2,p} + 
\|u(t)\|^{p-1}_{2,p}  \|v(t)\|_{2,p}  \|\W^Q \|_{k,2}. 
\\ & +  \Tilde{L_1}(\|\nabla\rho(t)\|_{1,p})\|u(t)\|^{p-1}_{p,2} \|v(t)\|^2_{2,p} \,,
\end{align*}

where
$$\Tilde{L_1}(\|\nabla\rho(t)\|_{1,p})=K(\|\nabla\rho(t)\|_{1,p})(m^{-2}\|\nabla \rho(t)\|_{1,p}+m^{-1})\,.$$
It is clear that the above estimate also holds to $u^2_{x_i}(t)$ and $u^3_{x_i}(t)$. Then,
\begin{align}\label{P17-eq1bi}
\frac{1}{p}\frac{d}{dt}\|\nabla u(t)\|_{L^p}^p & \leq C  \left[\|v(t)\|_{2,p}\|u(t)\|^{p}_{2,p}  + \|u(t)\|^{p-1}_{2,p}  \|v(t)\|_{2,p}  \|\W^Q \|_{k,2}.  \right.\nonumber\\
& +\left. L_1(\|\nabla\rho(t)\|_{1,p})\|u(t)\|^{p-1}_{p,2} \|v(t)\|^2_{2,p}    \right]\,.
\end{align}

\noindent{\it \underline{Step 4}: Estimate of $\displaystyle \frac{1}{p}\frac{d}{dt}\|D^2 u(t)\|_{L^p}^p$.} Following the same kind of estimation as in step 4 of the proposition,  we have

\begin{align}\label{P16-eq3bi}
\frac{1}{p}\frac{d}{dt}\|D^2 u(t)\|_{L^p}^p & \leq C (3 \|v(t)\|_{2,p}\|u(t)\|^p_{2,p} + \|u(t)\|^{p-1}_{2,p} \|v(t)\|_{2,p}  \|\W^Q \|_{k,2}   \\ &  +\Tilde{L}_2(\|\nabla\rho(t)\|_{1,p}) \|u(t)\|^{p-1}_{2,p} \|v(t)\|^2_{2,p}  ) \,.
\end{align}

where
\begin{align*}
\Tilde{L}_2(\|\nabla\rho(t)\|_{1,p}) & =K(\|\nabla\rho(t)\|_{1,p})\Big[2 m^{-3} \|\nabla \rho(t)\|^2_{1,p}+3 m^{-2}  \|\nabla\rho(t)\|_{1,p}+m^{-1}\Big].
\end{align*}

\noindent{\it \underline{Step 5}: Conclusion.}  From the steps 2, 3 and 4 we conclude

\begin{align*}
\frac{d}{dt}\|u(t)\|_{2,p}\leq c_2 \left[\|v(t)\|_{2,p}\|u(t)\|_{2,p} + \|v(t)\|_{2,p} \|\W^Q \|_{k,2}  
  \right.\\
  +\left. L(\|\nabla\rho(t)\|_{1,p})
\|v(t)\|^2_{2,p}\right].
\end{align*}

with $L$ defined by
\begin{align*}
L(\|\nabla\rho(t)\|_{1,p})=3m^{-1}  K(\|\nabla\rho(t)\|_{1,p})+\Tilde{L}_1(\|\nabla\rho(t)\|_{1,p})+\Tilde{L}_2(\|\nabla\rho(t)\|_{1,p}).
\end{align*}
By Gronwall's inequality our desired estimate holds.

\end{proof}


\section{Proof of the theorem \ref{maintheorem} } 

The main goal of this section is to prove Theorem \ref{maintheorem}. To this end, we will conveniently use the auxiliary problems described in the preceding sections, along with their corresponding estimates.   
 The proof of the existence of solutions will be divided into several steps and will ultimately be concluded in Proposition \ref{existence}. The central idea behind this proof is the construction of a suitable sequence of successive approximations, iteratively obtained as solutions to our auxiliary problems. 
 The uniqueness of solutions is established in Proposition \ref{uniqueness} and relies on some of the ideas previously introduced in the proof of the existence theorem.

\subsection{Successive approximations}
Now, we explicitly describe the approximations announced before. 
Let $\Tilde{v}^{(0)}=0$, and for $k\in \NN$, let  $\rho^{(k)}$, $\nabla \pi^{(k)}$ and $u^{(k)}$ be the corresponding solutions of the following problems
\begin{equation}\label{iterho}
\begin{cases} 
\rho^{(k)}_t + \ie \Tilde{v}^{(k-1)} \cdot \nabla \rho^{(k)} =0, &\\ 
\rho^{(k)}|_{t=0}=\rho_0(x),&
\end{cases}
\end{equation}
\begin{equation}\label{itep}
\begin{cases} 
\text{div}\left(\frac{1}{\rho^{(k)}}\nabla \pi^{(k)}\right)=-\z^{-2}\sum_{i, j=1}^3  \Tilde{v}^{(k-1),i}_{x_j}\Tilde{v}^{(k-1),j}_{x_i}, &\\ 
\end{cases}
\end{equation}

\begin{equation}\label{iteu}
\begin{cases}  
 u_t^{(k)} + \ie \Tilde{v}^{(k-1)}\cdot \nabla u^{(k)}+ \frac{\z}{\rho^{(k)}}\nabla \pi^{(k)}=0,\\
u^{(k)}|_{t=0} =v_0(x), &
\end{cases}
\end{equation}
Finally, we define 
\begin{equation}\label{defv}
    \Tilde{v}^{(k)}= u^{(k)}-\nabla \phi^{(k)}
\end{equation}
where $\phi^{(k)}$is the solution of 
\begin{equation}\label{eqphi}
\Delta \phi^{(k)}=\text{div } u^{(k)}.
\end{equation}

The following result is crucial to prove that our successive approximations converge to the desired solution.

\begin{proposition}\label{propbouv}
There exists a stopping time $\tau$, such that the sequence $\left\{\Tilde{v}^{(k)}\right\}_k$ is bounded in $C([0,\tau];W^{2,p}(\RR^3))$. In particular,
 \begin{equation*}
     \sup_{0\leq t \leq \tau}\|\Tilde{v}^{(k)}(t)\|_{2,p}\leq A,
\end{equation*} 
for some constant $A>1$.
\end{proposition}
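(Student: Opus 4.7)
The plan is a standard induction on $k$: fix a constant $A > 1$ and construct a stopping time $\tau > 0$, then show that $\sup_{[0,\tau]}\|\tilde v^{(k-1)}\|_{2,p} \le A$ implies $\sup_{[0,\tau]}\|\tilde v^{(k)}\|_{2,p} \le A$. The base case $k = 1$ is immediate: since $\tilde v^{(0)} \equiv 0$, equation \eqref{itep} gives $\nabla p^{(1)} \equiv 0$, so \eqref{iteu} yields $u^{(1)} \equiv v_0$; because $\mathrm{div}\, v_0 = 0$, \eqref{eqphi} forces $\phi^{(1)} \equiv 0$, whence $\tilde v^{(1)} \equiv v_0$ and $\|\tilde v^{(1)}\|_{2,p} = \|v_0\|_{2,p} \le A$ for any $A \ge \|v_0\|_{2,p}$.

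To convert the stochastic framework into a pathwise one, I would first fix $N > 1$ and introduce
\[
\tau_N := \inf\{t \ge 0 : \max(\z(t), \ie(t)) \ge N\} \wedge T.
\]
Since $\z$ has continuous trajectories with $\z(0) = 1$, $\tau_N > 0$ almost surely, and on $[0,\tau_N]$ both $\z$ and $\ie$ are bounded by $N$. Under the inductive hypothesis, Proposition \ref{proprho2} applied to \eqref{iterho} with advecting field $\ie\tilde v^{(k-1)}$ (bounded in $W^{2,p}$ by $NA$) yields
\[
\|\nabla\rho^{(k)}(t)\|_{1,p} \le \|\nabla\rho_0\|_{1,p}\exp(c_1 NA t) \le R,
\]
for $R := \|\nabla\rho_0\|_{1,p}\exp(c_1 NA)$, provided $t \le 1$. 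Proposition \ref{propp} then gives $\|\nabla p^{(k)}(t)\|_{2,p} \le N^2 K(R) A^2$, and Proposition \ref{propu} gives
\[
\|u^{(k)}(t)\|_{2,p} \le \exp(c_2 NA\, t)\bigl[\|v_0\|_{2,p} + N L(R) A^2\, t\bigr],
\]
where I have used the monotonicity of $K$ and $L$ together with the bound on $\|\nabla\rho^{(k)}\|_{1,p}$.

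To pass from $u^{(k)}$ to $\tilde v^{(k)} = u^{(k)} - \nabla\phi^{(k)}$, observe that the map $u^{(k)} \mapsto \tilde v^{(k)}$ is the Leray projector associated with $\Delta\phi^{(k)} = \mathrm{div}\, u^{(k)}$. By Calderón--Zygmund theory for the Riesz transforms on $W^{2,p}(\mathbb{R}^3)$ with $1 < p < \infty$, there exists an absolute constant $C' > 0$ such that $\|\tilde v^{(k)}(t)\|_{2,p} \le C'\|u^{(k)}(t)\|_{2,p}$. Now choose $A := 4C'\|v_0\|_{2,p} + 1$, and then pick a deterministic $T_0 \in (0,1]$ small enough that $\exp(c_2 NA T_0) \le 2$ and $N L(R) A^2 T_0 \le \|v_0\|_{2,p}$. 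Setting $\tau := \tau_N \wedge T_0$ yields a strictly positive stopping time on which $\|\tilde v^{(k)}(t)\|_{2,p} \le 4C'\|v_0\|_{2,p} \le A$, closing the induction.

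The main obstacle is the self-referential coupling of the estimates: the bound for $u^{(k)}$ in Proposition \ref{propu} contains the factor $L(\|\nabla\rho^{(k)}\|_{1,p})$ whose argument grows exponentially in time through Proposition \ref{proprho2}, and the resulting bound is quadratic in $A$. The parameters must therefore be fixed in the strict order $N \to A \to R \to T_0$, and one must verify that shrinking $T_0$ genuinely absorbs both the exponential prefactor coming from the transport estimate and the quadratic dependence on $A$ coming from the pressure estimate. Because $\tau_N$ is a.s.\ strictly positive and $T_0$ is deterministic, the resulting $\tau$ is a.s.\ strictly positive, as required.
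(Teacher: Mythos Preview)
Your proof is correct and follows the same inductive skeleton as the paper: both use Propositions~\ref{proprho2}, \ref{propp}, \ref{propu} together with the Leray-projector bound $\|\tilde v^{(k)}\|_{2,p}\le c_3\|u^{(k)}\|_{2,p}$ to close the loop. The only substantive difference is in how the stopping time is manufactured. You bound $\ie$ pointwise via $\tau_N$ and then shrink time deterministically by $T_0$, fixing parameters in the order $N\to A\to R\to T_0$. The paper instead defines
\[
\tau=\inf\Bigl\{t\ge 0:\int_0^t \ie(s)\,ds\ge A^{-2}\Bigr\}\wedge T,
\]
which directly caps the integral $\int_0^\tau \ie\,ds$ appearing in all the a priori estimates by $A^{-2}$; this makes the exponent $c_2\int_0^\tau \ie\|\tilde v^{(k-1)}\|_{2,p}\,ds\le c_2A\cdot A^{-2}\le c_2$ and the forcing $\int_0^\tau \ie L(\cdot)\|\tilde v^{(k-1)}\|_{2,p}^2\,ds\le L(\cdot)A^2\cdot A^{-2}=L(\cdot)$ automatically, so a single choice of $A$ suffices without an auxiliary $N$ or $T_0$. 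Your route is slightly heavier in bookkeeping but equally valid; the paper's stopping time is tailored to the structure of the estimate and avoids the extra layering of parameters.
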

\begin{proof}
Recalling the estimates of Proposition \ref{propu} we get
\begin{align*}
    \|u\kk(t)\|_{2,p}&\leq \text{exp}\left( c_2\intt \ie(s)\|\Tilde{v}\ek(s)\|_{2,p}\,ds\right)\\&\hspace{0.5cm}\cdot\left[\|v_0\|_{2,p}+ \intt \ie(s) L(\|\nabla \rho\kk(s)\|_{1,p})\| \Tilde{v}\ek(s)\|^2_{2,p}\,ds\right].
\end{align*}
By Eq.\eqref{defv} and Eq.\eqref{eqphi} we have
\begin{equation*}
\|\Tilde{v}^{(k)}\|_{2,p}\leq \|u^{(k)}\|_{2,p} + \|\nabla \phi^{(k)}\|_{2,p}\leq c_3\|u^{(k)}\|_{2,p},
\end{equation*}
which combined with the above estimate implies
\begin{align*}\label{inductionk}
\|\Tilde{v}^{(k)}(t)\|_{2,p}&\leq c_3 \text{ exp}\left( c_2\intt \ie(s) \|\Tilde{v}\ek(s)\|_{2,p}\,ds\right)\\
\notag &\hspace{1cm}\cdot\big[\|v_0\|_{2,p}+ \intt \ie(s) L(\|\nabla \rho\kk(s)\|_{1,p})\|\Tilde{v}\ek(s)\|^2_{1,p}\,ds\big].
\end{align*}
Now, we consider $A>1$ with 
\begin{align*}
    A\geq c_3\text{exp}(c_2)\left[\|v_0\|_{2,p}+L\left(\|\nabla \rho_0\|_{1,p}\text{exp}(c_1)\right) \right].
\end{align*}
We define 
 \begin{equation*}
        \tau=\inf\{t\geq 0 : \intt \ie(s)\,ds\geq A^{-2}\}\wedge T,
  \end{equation*}
    \ is a stopping time following \cite[Proposition 3.9]{L}.
 Now, we can prove our claim by induction on $k$. The case $k=0$ trivially holds.
 If  we suppose
\begin{equation*}
     \sup_{0\leq t \leq \tau}\|\Tilde{v}^{(k-1)}\|_{2,p}\leq A,
\end{equation*}
then using  Proposition \ref{proprho2} combined with the fact $\|\Tilde{v}(t)\|_{2,p}=\z(t)\|v(t)\|_{2,p}$, we obtain the following estimate for every $t\leq \tau$:
\begin{align*}
    \|\nabla\rho\kk(t)\|_{1,p}& \leq \|\nabla\rho_0\|_{1,p}\text{ exp}\left(c_1 \intt\ie(s)\|\Tilde{v}\ek(s)\|_{2,p} \,ds\right)\\
    &\leq  \|\nabla\rho_0\|_{1,p}\text{ exp}\left(c_1 A \intt\ie(s)\,ds\right)\\
    &\leq  \|\nabla\rho_0\|_{1,p}\text{ exp}\left(c_1\right),
\end{align*}
 where the last inequality follows from the definition of the stopping time $\tau$ and the assumption $\frac{1}{A^2}\leq \frac{1}{A}$. 
Finally, using the above estimates for $\|\Tilde{v}^{(k-1)}\|_{2,p}$ and $\|\nabla\rho\kk\|_{1,p}$, and the fact $L$ are increasing functions, we can deduce 
\begin{align*}
\|\Tilde{v}^{(k)}(t)\|_{2,p}&\leq c_3 \text{ exp}\left( c_2\intt \ie(s) A\,ds\right)\\
\notag &\hspace{1cm}\cdot\big[\|v_0\|_{2,p}+ \intt \ie(s) L(\|\nabla\rho_0\|_{1,p}\text{ exp}\left(c_1\right))A^2\,ds\big]\\
&\leq c_3 \text{ exp}\left(c_2\right)\cdot\big[\|v_0\|_{2,p}+  L(\|\nabla\rho_0\|_{1,p}\text{ exp}\left(c_1\right))\big]\leq A,
\end{align*}
completing our proof.
\end{proof}

As a direct consequence of the above proposition and its corresponding proof, we deduce the following bounds for $\{\rho^{(k)}\}_k$, and $\{\pi^{(k)}\}_k$.

\begin{corollary}\label{coroest}  The next estimates hold
\begin{equation}\label{esrho1}
    \sup_{0\leq t\leq \tau}\|\nabla\rho^{(k)}(t)\|_{1,p}\leq \text{exp}(c_1)\|\nabla\rho_0\|_{1,p}=L_1,
\end{equation}


\begin{equation}\label{esp1}
    \sup_{0\leq t\leq \tau}\|\nabla \pi^{(k)}(t)\|_{2,p}\leq \C^2 K(L_1)A^2=L_2,
\end{equation}

where $\C(\omega)=\sup_{0\leq t\leq \tau} \ie(t,\omega)$.
\end{corollary}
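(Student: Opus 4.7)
The plan is to extract both bounds as a bookkeeping consequence of Proposition \ref{propbouv}, combined with the two a priori estimates already proved in the preceding section (the transport bound of Proposition \ref{proprho2} and the pressure bound of Proposition \ref{propp}) and applied along the iteration \eqref{iterho}--\eqref{iteu}. Since the corollary is advertised as a direct consequence of the proposition and its proof, the argument reduces to a uniform-in-$k$ bookkeeping step.

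First I would handle \eqref{esrho1}. Applying the estimate \eqref{esrho} of Proposition \ref{proprho2} to the transport equation \eqref{iterho}, whose divergence-free drift is $\ie\Tilde{v}\ek$, gives
\begin{equation*}
\|\nabla\rho\kk(t)\|_{1,p}\leq \|\nabla\rho_0\|_{1,p}\exp\Bigl(c_1\int_0^t \ie(s)\|\Tilde{v}\ek(s)\|_{2,p}\,ds\Bigr).
\end{equation*}
From Proposition \ref{propbouv} I have $\|\Tilde{v}\ek(s)\|_{2,p}\leq A$ on $[0,\tau]$, and by the very definition of the stopping time $\tau$, $\int_0^\tau \ie(s)\,ds\leq A^{-2}$. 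Since $A>1$, the exponent is bounded by $c_1 A\cdot A^{-2}=c_1/A\leq c_1$, which yields $\|\nabla\rho\kk(t)\|_{1,p}\leq \exp(c_1)\|\nabla\rho_0\|_{1,p}=L_1$, uniformly in $t\in[0,\tau]$ and in $k$. (This inequality is in fact already recorded inside the proof of Proposition \ref{propbouv}.)

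Next, for \eqref{esp1}, I would apply Proposition \ref{propp} to the elliptic equation \eqref{itep} with $\rho=\rho\kk$ and $\Tilde{v}=\Tilde{v}\ek$, obtaining
\begin{equation*}
\|\nabla p\kk(t)\|_{2,p}\leq \z^{-2}(t)\,K(\|\nabla\rho\kk(t)\|_{1,p})\,\|\Tilde{v}\ek(t)\|^2_{2,p}.
\end{equation*}
Using the monotonicity of $K$ together with the bound $\|\nabla\rho\kk(t)\|_{1,p}\leq L_1$ from the previous step, the inductive bound $\|\Tilde{v}\ek(t)\|_{2,p}\leq A$, and $\z^{-2}(t)\leq \C^2$ on $[0,\tau]$, I conclude $\|\nabla p\kk(t)\|_{2,p}\leq \C^2 K(L_1)A^2=L_2$.

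I do not expect any substantial obstacle: this corollary is a pass-through of the previously-established bounds, and the only point requiring care is that all three inputs (the iterative bound on $\Tilde{v}\ek$, the stopping-time control on $\int\ie$, and the monotonicity of $K$) must be applied on the same random interval $[0,\tau]$, and the resulting constants $L_1$ and $L_2$ must be $k$-independent, since this uniformity is precisely what will be exploited in the compactness and contraction arguments used to construct the limit solution.
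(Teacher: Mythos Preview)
Your proposal is correct and follows exactly the approach indicated by the paper, which presents this corollary as a direct consequence of Proposition \ref{propbouv} and its proof: the bound \eqref{esrho1} is indeed already derived verbatim inside that proof, and \eqref{esp1} is then an immediate application of Proposition \ref{propp} together with the monotonicity of $K$, the bound $\|\Tilde{v}\ek\|_{2,p}\leq A$, and the definition of $\C$.
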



\subsection{Convergence results}

To show the convergence of the successive approximation constructed in the previous section, we consider the sequences of differences given by: $\sigma\kk=\rho\kk-\rho\ek$, $h\kk=u\kk-u\ek$, $q\kk=\pi\kk-\pi\ek $, $w\kk= v\kk- v\ek$ and $\Tilde{w}\kk= \Tilde{v}\kk- \Tilde{v}\ek$.

\begin{remark}
Subtracting the equations in each of the problems \eqref{iterho}, \eqref{itep} and \eqref{iteu} corresponding to the steps $(k)$ and $(k-1)$, we obtain 
\begin{equation}\label{defsigma}
\begin{cases} 
\sigma^{(k)}_t + \ie \Tilde{v}^{(k-1)} \cdot \nabla \sigma^{(k)} =-\ie \nabla \rho\ek \cdot \Tilde{w}\ek, &\\ 
\sigma^{(k)}|_{t=0}=0,&
\end{cases}
\end{equation}
\begin{align}\label{defq}
\begin{cases}
\text{div}\left(\frac{1}{\rho^{(k)}}\z\nabla q^{(k)}\right)&=\text{div}\left( \frac{\sigma\kk}{\rho\kk \rho\ek}\z\nabla \pi\ek \right)-\ie\sum_{i, j=1}^3   \Tilde{w}^{(k-1),i}_{x_j} \Tilde{v}^{(k-1),j}_{x_i}\\
&-\ie \sum_{i, j=1}^3  \Tilde{v}_\xxj^{(k-2),i}\Tilde{w}_\xxi^{(k-1),j},  
\end{cases}
\end{align}

\begin{align}\label{defh}
\begin{cases} 
 h_t^{(k)} + \ie \Tilde{v}^{(k-1)}\cdot \nabla h^{(k)} + \z\frac{\nabla q^{(k)}}{\rho^{(k)}}=\z\frac{\sigma\kk}{\rho\ek \rho\kk}\nabla \pi\ek - \ie \Tilde{w}\ek\nabla u\ek, \\
h^{(k)}|_{t=0} =0.
\end{cases}
\end{align}
\end{remark}

In the following three technical lemmas we deduce some estimates of the new variables $\sigma\kk$, $q\kk$ and $h\kk$, bounded in terms of  $\tilde{w}\kk$. The ideas behind the proofs are similar to the ones used when we proved the corresponding estimates for $\rho$, $p$, and $u$. 

\begin{lemma}\label{sigmaest}
    Let $\sigma\kk$ be the solution of the system \eqref{defsigma}, then 
    \begin{align}
    \|\sigma\kk(t)\|_{1,p}\leq 
    L_5 \intt \ie(s) \| \Tilde{w}\ek(s)\|_{1,p}\,ds,
    \end{align}  
for all $t\in [0, \tau]$.
\end{lemma}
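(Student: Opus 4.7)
The plan is to treat \eqref{defsigma} as a transport equation with a source term, following exactly the pattern of Proposition~\ref{proprho2}, but now keeping careful track of the source $-\z^{-1}\nabla\rho^{(k-1)}\cdot\tilde w^{(k-1)}$. The a priori bounds on $\tilde v^{(k-1)}$ from Proposition~\ref{propbouv} and on $\nabla\rho^{(k-1)}$ from Corollary~\ref{coroest} will provide uniform constants that eventually get absorbed into $L_5$.

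First, I would estimate $\|\sigma^{(k)}(t)\|_{L^p}$. Multiplying the first equation in \eqref{defsigma} by $|\sigma^{(k)}|^{p-1}\operatorname{sgn}(\sigma^{(k)})$, integrating over $\RR^3$, and using $\operatorname{div}\tilde v^{(k-1)}=0$ to cancel the transport term, Hölder's inequality together with the Sobolev embedding $W^{1,p}(\RR^3)\hookrightarrow L^\infty(\RR^3)$ for $p>3$ yields
\begin{equation*}
\frac{1}{p}\frac{d}{dt}\|\sigma^{(k)}(t)\|_{L^p}^p \leq \z^{-1}(t)\,\|\nabla\rho^{(k-1)}(t)\|_{1,p}\,\|\sigma^{(k)}(t)\|_{L^p}^{p-1}\,\|\tilde w^{(k-1)}(t)\|_{L^p}.
\end{equation*}
Next, for each $i=1,2,3$, I differentiate \eqref{defsigma} in $x_i$ to obtain
\begin{equation*}
\partial_t(\partial_i\sigma^{(k)}) + \z^{-1}\tilde v^{(k-1)}\cdot\nabla(\partial_i\sigma^{(k)}) = -\z^{-1}\partial_i\tilde v^{(k-1)}\cdot\nabla\sigma^{(k)} - \z^{-1}\partial_i(\nabla\rho^{(k-1)}\cdot\tilde w^{(k-1)}).
\end{equation*}
Multiplying by $|\partial_i\sigma^{(k)}|^{p-1}\operatorname{sgn}(\partial_i\sigma^{(k)})$, integrating, and applying $\operatorname{div}\tilde v^{(k-1)}=0$ once more, Hölder together with Sobolev embedding gives
\begin{equation*}
\frac{1}{p}\frac{d}{dt}\|\nabla\sigma^{(k)}(t)\|_{L^p}^p \leq C\z^{-1}(t)\Bigl[\|\tilde v^{(k-1)}(t)\|_{2,p}\|\nabla\sigma^{(k)}(t)\|_{L^p}^p + \|\nabla\rho^{(k-1)}(t)\|_{1,p}\|\nabla\sigma^{(k)}(t)\|_{L^p}^{p-1}\|\tilde w^{(k-1)}(t)\|_{1,p}\Bigr].
\end{equation*}

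Adding the two inequalities and dividing by $\|\sigma^{(k)}(t)\|_{1,p}^{p-1}$, I obtain
\begin{equation*}
\frac{d}{dt}\|\sigma^{(k)}(t)\|_{1,p} \leq C\z^{-1}(t)\,\|\tilde v^{(k-1)}(t)\|_{2,p}\,\|\sigma^{(k)}(t)\|_{1,p} + C\z^{-1}(t)\,\|\nabla\rho^{(k-1)}(t)\|_{1,p}\,\|\tilde w^{(k-1)}(t)\|_{1,p}.
\end{equation*}
Since $t\leq\tau$, I can replace $\|\tilde v^{(k-1)}(t)\|_{2,p}\leq A$ by Proposition~\ref{propbouv} and $\|\nabla\rho^{(k-1)}(t)\|_{1,p}\leq L_1$ by Corollary~\ref{coroest}. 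Together with the bound $\int_0^\tau\z^{-1}(s)\,ds\leq A^{-2}\leq 1$ from the definition of $\tau$, Gronwall's inequality applied with initial condition $\sigma^{(k)}(0)=0$ gives
\begin{equation*}
\|\sigma^{(k)}(t)\|_{1,p} \leq C L_1 \exp(CA)\int_0^t \z^{-1}(s)\|\tilde w^{(k-1)}(s)\|_{1,p}\,ds,
\end{equation*}
which is the desired estimate upon defining $L_5:=CL_1\exp(CA)$.

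The main subtlety I expect is keeping the source term at the $\|\cdot\|_{1,p}$-level rather than $\|\cdot\|_{2,p}$: the term $\partial_i(\nabla\rho^{(k-1)}\cdot\tilde w^{(k-1)})$ produces $\nabla\rho^{(k-1)}\cdot\partial_i\tilde w^{(k-1)}$ which must be bounded using $\sup|\nabla\rho^{(k-1)}|\cdot\|\partial_i\tilde w^{(k-1)}\|_{L^p}$, and the Sobolev embedding $W^{1,p}\hookrightarrow L^\infty$ is exactly what allows this without losing one derivative on $\tilde w^{(k-1)}$.
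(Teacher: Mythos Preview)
Your proof is correct and follows essentially the same approach as the paper: both treat \eqref{defsigma} as a transport equation with source, derive the $L^p$ and $\nabla$-in-$L^p$ energy inequalities via the divergence-free cancellation and Sobolev embedding, insert the uniform bounds $\|\tilde v^{(k-1)}\|_{2,p}\le A$ and $\|\nabla\rho^{(k-1)}\|_{1,p}\le L_1$, and close with Gronwall. The only cosmetic difference is which factor you put in $L^\infty$ in the zeroth-order step (you embed $\nabla\rho^{(k-1)}$, the paper embeds $\tilde w^{(k-1)}$), which is immaterial.
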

\begin{proof}
We use  the same ideas of the proof of Proposition \ref{proprho}, first we obtain
\begin{align*}
  \frac{1}{p} \dt \|\sigma\kk(t)\|^p_{L^p} &\leq  \ie\supx |\Tilde{w}\ek| \intr | \sigma\kk|^{p-1}|\nabla \rho\ek|\, dx\\
   & \leq \ie \|\Tilde{w}\ek\|_{1,p}\|\sigma\kk\|_{L^p}^{p-1}\|\nabla\rho\ek\|_{L^p}\\
   &\leq \ie L_1 \|\Tilde{w}\ek\|_{1,p}\|\sigma\kk\|_{L^p}^{p-1}.
\end{align*}
Then, the same computation on the first derivative shows that
\begin{align*}
    \frac{1}{p}\dt \|\sigma_\xxi\kk(t)\|^p_{L^p}&\leq   \ie \bigg (\supx |\Tilde{v}_\xxi\ek|\intr |\sigma_\xxi\kk|^{p-1}|\nabla\sigma\kk|\,dx\\
    &\hspace{1cm}+ \supx |\nabla \rho\ek| \intr |\sigma_\xxi\kk|^{p-1}|\Tilde{w}_\xxi\ek|\,dx\\
    &\hspace{1cm}+  \supx |\Tilde{w}\ek| \intr |\sigma_\xxi\kk|^{p-1}|\nabla\rho_\xxi\ek|\,dx \bigg 
    )\\
    &\leq  \ie\left(A \|\nabla\sigma\kk\|_{L^p}^p+ 2  L_1 \|\Tilde{w}\ek \|_{1,p} \|\nabla\sigma\kk\|_{L^p}^{p-1}\right).
\end{align*}
Consequently, we deduce
\begin{align*}
    \dt \| \sigma\kk(t)\|_{1,p}\leq c_4  \ie \left( L_4  \|\Tilde{w}\ek(t) \|_{1,p}+ A \| \sigma\kk(t)\|_{1,p}\right),
\end{align*}
which by Gronwall's inequality implies our claim.
\end{proof}

\begin{lemma}\label{qest}
Let $q\kk$ be the solution of the system \eqref{defq}, then     
\begin{equation*}
    \|\nabla q\kk(t)\|_{1,p}\leq L_6\left(   \|\sigma\kk(t)\|_{1,p}+\z^{-2}(t)\|\Tilde{w}\ek(t)\|_{1,p}\right),
\end{equation*}
for all $t\in [0, \tau]$.
\end{lemma}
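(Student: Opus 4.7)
The plan is to apply the elliptic regularity estimate of \cite[Lemma 3.1]{ITO} to \eqref{defq}, in parallel with its use in Proposition \ref{propp}, but one derivative of regularity lower: an $L^p(\RR^3)$ right-hand side will yield the desired $W^{1,p}$ bound on $\nabla q\kk$. The three steps are (i) divide \eqref{defq} through by $\z$ so that the $z$-scaling on the right becomes transparent, (ii) bound every piece of the resulting right-hand side in $L^p$, and (iii) invoke the elliptic estimate with the coefficient $(\rho\kk)^{-1}$.

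After dividing \eqref{defq} by $\z(t)$, it takes the form
\begin{align*}
\text{div}\!\left(\frac{1}{\rho\kk}\nabla q\kk\right) &= \text{div}\!\left(\frac{\sigma\kk}{\rho\kk \rho\ek}\nabla p\ek\right) - \z^{-2}\sum_{i,j=1}^{3}\Tilde{w}^{(k-1),i}_{x_j}\Tilde{v}^{(k-1),j}_{x_i} \\
&\quad -\z^{-2}\sum_{i,j=1}^{3}\Tilde{v}^{(k-2),i}_{x_j}\Tilde{w}^{(k-1),j}_{x_i},
\end{align*}
which exhibits precisely the $\z^{-2}$ factor of the statement. Expanding the divergence in the first right-hand side term by the product rule produces only $\nabla\sigma\kk$ and $\sigma\kk$ paired with $\nabla p\ek$, $D^2 p\ek$, $\nabla\rho\kk$ and $\nabla\rho\ek$ (with the $\rho$'s in the denominator). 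Using the lower bound $\rho\geq m$, Corollary \ref{coroest} (which yields $\|\nabla\rho\kk\|_{1,p},\|\nabla\rho\ek\|_{1,p}\leq L_1$ and $\|\nabla p\ek\|_{2,p}\leq L_2$), and the Banach-algebra embedding $W^{1,p}(\RR^3)\hookrightarrow L^\infty(\RR^3)$ (valid since $p>3$), the $L^p$-norm of this term is controlled by $C(m,L_1,L_2)\,\|\sigma\kk(t)\|_{1,p}$. For the two product sums, the same $L^\infty$-embedding combined with $\|\Tilde{v}\ek\|_{2,p},\|\Tilde{v}^{(k-2)}\|_{2,p}\leq A$ from Proposition \ref{propbouv} gives an $L^p$-bound $CA\,\|\Tilde{w}\ek(t)\|_{1,p}$ for each.

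Finally, applying the elliptic estimate to the divergence-form equation above with coefficient $(\rho\kk)^{-1}$ (whose bounds $m\leq \rho\kk\leq M$ and $\|\nabla\rho\kk\|_{1,p}\leq L_1$ hold uniformly in $t\in[0,\tau]$), one has $\|\nabla q\kk(t)\|_{1,p}\leq K(L_1)\,\|F(t)\|_{L^p}$, where $F$ denotes the full right-hand side and $K$ is the same non-decreasing function appearing in Proposition \ref{propp}. Combining with the bounds of step (ii) and absorbing constants into a single $L_6 = L_6(m,L_1,L_2,A)$ yields the claim. The main obstacle is essentially bookkeeping of the $\z$-factors so that only the $\Tilde{w}\ek$ contributions retain the weight $\z^{-2}$ while the $\sigma\kk$ contribution emerges $z$-free; dividing by $\z$ at the very beginning is what makes this separation clean, and a secondary check is that every piece produced by expanding the first right-hand side term indeed lies in $L^p$ rather than requiring a stronger norm.
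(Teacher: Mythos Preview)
Your proposal is correct and follows essentially the same approach as the paper: after dividing \eqref{defq} by $\z$, the paper also splits the right-hand side into the three pieces you identify (called $I_1$, $I_2$, $I_3$ there), invokes the elliptic estimate of \cite[Lemma 3.1]{ITO} to obtain $\|\nabla q\kk\|_{1,p}\leq K_3(\|(\rho\kk)^{-1}\|_{C^1})\|I_1+I_2+I_3\|_{L^p}$, and then bounds each $I_j$ in $L^p$ exactly as you describe, using the Sobolev embedding and the uniform bounds of Corollary \ref{coroest} and Proposition \ref{propbouv}. Your treatment of the $\z$-bookkeeping is identical to the paper's.
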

\begin{proof}
First, we define 
\begin{equation*}
I_1= \text{div}\left( \frac{\sigma\kk}{\rho\kk \rho\ek}\nabla \pi\ek \right),\\  
\end{equation*}
\begin{equation*}
I_2=-\z^{-2}\sum_{i, j=1}^3  \Tilde{w}^{(k-1),i}_{x_j} \Tilde{v}^{(k-1),j}_{x_i},  
\end{equation*}
\begin{equation*}
I_3= -\z^{-2}\sum_{i, j=1}^3  \Tilde{v}_\xxj^{(k-2),i}\Tilde{w}_\xxi^{(k-1),j}.
\end{equation*}
Then, using the same idea as in the proof of \cite[Lemma 3.1]{ITO}, we deduce the estimate
\begin{equation}\label{esI}
    \|\nabla q\kk\|_{1,p}\leq K_3\left(\bigg\|\frac{1}{\rho\kk}\bigg\|_{C^1(\RR^3)}\right) \|I_1 +I_2+ I_3\|_{L^p}.
\end{equation}
Now, we want to control the norms of each $I_1$, $I_2$ and $I_3$. Again, applying the same method as in the other estimates, we compute the following: 
\begin{align*}
  \|I_1\|_{L^p}&=\bigg\|\nabla\left(\frac{\sigma\kk}{\rho\kk \rho\ek}\right) \nabla \pi\ek+ \frac{\sigma\kk}{\rho\kk \rho\ek} \Delta \pi\ek\bigg\|_{L^p}\\
  &\leq \bigg\| \sum_{i=1}^3\frac{\sigma_\xxi\kk \rho\kk \rho\ek-\sigma\kk\left(\rho_\xxi\kk \rho\ek+\rho\kk\rho_\xxi\ek \right)}{(\rho\kk\rho\ek)^2}e_i\cdot \nabla \pi\ek\bigg \|_{L^p}\\
  &+ \bigg\|\frac{\sigma\kk}{\rho\kk \rho\ek}\Delta \pi\ek \bigg\|_{L^p}\\
  &\leq  m^{-2}\bigg(m^{-2}\big\| \left(\nabla\sigma\kk \rho\kk \rho\ek -\sigma\kk \nabla\rho\kk \rho\ek -\sigma\kk \rho\kk \nabla\rho\ek \right)\nabla \pi\ek\big\|_{L^p}\\
  &\hspace{1.5cm}+\|\sigma\kk \Delta \pi\ek \|_{L^p}\bigg)\\
  &\leq  Mm^{-4}\bigg(M\|\nabla \sigma\kk \nabla \pi\ek\|_{L^p} + \| \sigma\kk \nabla\rho\kk \nabla \pi\ek\|_{L^p}\\
  &\hspace{1.5cm} +\|\sigma\kk \nabla\rho\ek \nabla \pi\ek\|_{L^p}\bigg)+ m^{-2}\|\sigma\kk\|_{L^p}\|\nabla \pi\ek\|_{2,p}\\
&\leq  Mm^{-4}\left(M L_2 \| \nabla \sigma\kk\|_{L^p}+2 L_1 L_2\|\sigma\kk \|_{L^p}+L_2\| \sigma\kk\|_{1,p}\right)+ L_2m^{-2}\|\sigma\kk\|_{L^p}.
\end{align*}
The computations behind the estimates for the norm of the terms $I_2$ and $I_3$ are similar, so we will only show the first one:
\begin{align*}
\|I_2\|_{L^p}&\leq \z^{-2} \sum_{i,j=1}^3\supx |\Tilde{v}_{x_i}\ek| \|\nabla \Tilde{w}\ek\|_{L^p}\leq c_5 \z^{-2} A \|\Tilde{w}\ek\|_{1,p}.
\end{align*}
 Finally, our result follows by applying these estimates on Eq. \eqref{esI}.
\end{proof}

\begin{lemma}\label{hest}
Let $h\kk$ be the solution of the system \eqref{defh}, then 
\begin{equation*}
    \|h\kk(t) \|_{1,p}\leq L_9 \intt \left(\z(s) \| \nabla q\kk(s)\|_{1,p}+\ie(s)  \|\Tilde{w}\ek(s) \|_{1,p}+\z(s)\,\|\sigma\kk (s)\|_{1,p}\right)\, ds,
\end{equation*}
for all $t\in [0, \tau]$.
\end{lemma}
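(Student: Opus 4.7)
The strategy is to mimic the energy-method argument used in the proof of Proposition~\ref{propu}, now applied to~\eqref{defh} whose right-hand side is no longer zero. I would first proceed componentwise: for each $j=1,2,3$, multiply the equation for $h^{(k),j}$ by $|h^{(k),j}|^{p-1}\mathrm{sgn}(h^{(k),j})$ and integrate over $\RR^3$. Since $\text{div}\,\Tilde{v}\ek=0$, the transport contribution $\ie\,\Tilde{v}\ek\cdot\nabla h\kk$ vanishes, exactly as in Step~2 of Proposition~\ref{propu}. Among the three forcing terms, the pressure term yields, via Hölder's inequality and $\rho\kk\ge m$, a contribution bounded by $\z\,m^{-1}\|\nabla q\kk\|_{L^p}\|h\kk\|_{L^p}^{p-1}$; the $\sigma\kk$ term is controlled by combining $\|\nabla p\ek\|_{2,p}\le L_2$ from Corollary~\ref{coroest} with the Sobolev embedding $W^{2,p}(\RR^3)\hookrightarrow L^\infty(\RR^3)$ (valid since $3<p<\infty$); and the $\Tilde{w}\ek$ term by $\ie\,\|\Tilde{w}\ek\|_{L^p}\|\nabla u\ek\|_{L^\infty}$, where the bound $\|u\ek\|_{2,p}\le c_3 A$ follows from \eqref{defv}--\eqref{eqphi} together with Proposition~\ref{propbouv}.

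I would then repeat the same procedure after differentiating~\eqref{defh} in $x_i$, exactly as in Step~3 of Proposition~\ref{propu}. The new commutator $\ie\,\Tilde{v}\ek_{x_i}\cdot\nabla h\kk$ produces, after integration, a term of order $\ie\,\|\Tilde{v}\ek\|_{2,p}\|\nabla h\kk\|_{L^p}^p\le \ie A\,\|\nabla h\kk\|_{L^p}^p$, while the differentiated forcing terms yield contributions of the same three types, with additional multiplicative factors of $\|\nabla\rho\ek\|_{1,p}\le L_1$ arising from differentiating $(\rho\kk\rho\ek)^{-1}$, again absorbed into constants thanks to Corollary~\ref{coroest}. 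Summing the $L^p$ and $\nabla L^p$ estimates and dividing by $\|h\kk(t)\|_{1,p}^{p-1}$ produces a scalar differential inequality of the form
$$\frac{d}{dt}\|h\kk(t)\|_{1,p}\le C A\,\ie(t)\,\|h\kk(t)\|_{1,p}+L_9'\bigl(\z(t)\|\nabla q\kk(t)\|_{1,p}+\z(t)\|\sigma\kk(t)\|_{1,p}+\ie(t)\|\Tilde{w}\ek(t)\|_{1,p}\bigr).$$

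Because $h\kk|_{t=0}=0$, Gronwall's inequality gives
$$\|h\kk(t)\|_{1,p}\le L_9'\int_0^t\exp\!\Bigl(CA\!\int_s^t\!\ie(r)\,dr\Bigr)\bigl(\z\|\nabla q\kk\|_{1,p}+\z\|\sigma\kk\|_{1,p}+\ie\|\Tilde{w}\ek\|_{1,p}\bigr)(s)\,ds.$$
For $t\le\tau$, the defining property of $\tau$ from Proposition~\ref{propbouv} yields $\int_0^\tau\ie(r)\,dr\le A^{-2}$, so the exponential factor is uniformly bounded by $\exp(C/A)$, which can be absorbed into a new constant $L_9$; this gives the claimed estimate. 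The main obstacle is the sheer bookkeeping in the gradient step, analogous to the $B_1,\dots,B_9$ expansion in Proposition~\ref{propu}, augmented by two new inhomogeneous pieces. In particular, the term $\partial_{x_i}(\ie\,\Tilde{w}\ek\cdot\nabla u\ek)=\ie\,\Tilde{w}^{(k-1)}_{x_i}\cdot\nabla u\ek+\ie\,\Tilde{w}\ek\cdot\nabla u^{(k-1)}_{x_i}$ is precisely where the bound $\|u\ek\|_{2,p}\le c_3A$ derived in the proof of Proposition~\ref{propbouv} is essential; no conceptually new ingredient beyond those already used in Propositions~\ref{proprho2} and~\ref{propu} is required.
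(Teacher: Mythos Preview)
Your proposal is correct and follows essentially the same approach as the paper's proof: componentwise energy estimates at the $L^p$ and $\nabla L^p$ levels (paralleling Steps~2--3 of Proposition~\ref{propu}), exploitation of $\text{div}\,\Tilde v^{(k-1)}=0$ to kill the transport term, use of the uniform bounds from Proposition~\ref{propbouv} and Corollary~\ref{coroest} to control all coefficients, and Gronwall's inequality with zero initial data. Your explicit observation that the exponential factor $\exp\bigl(CA\int_s^t\ie(r)\,dr\bigr)\le\exp(C/A)$ is bounded via the defining property of $\tau$ is exactly what the paper absorbs into the constant $L_9$ without comment.
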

\begin{proof}
Considering $h^{(k)}=(h^{k,1}, h^{k,2}, h^{k,3})$ and $u^{(k)}=(u^{k,1}, u^{k,2}, u^{k,3})$ we can rewrite \eqref{defh}, for $i=1, 2, 3$, componentwise by
\begin{align}\label{L3-eq1}
\left\{
\begin{aligned}
& h_t^{k,i} + \ie \Tilde{v}^{(k-1)}\cdot \nabla h^{k,i} + \ie \Tilde{w}\ek\cdot\nabla u^{k-1, i}+\z\frac{ q^{(k)}_{x_i}}{\rho^{(k)}}=\z\frac{\sigma\kk \pi^{(k-1)}_{x_i}}{\rho\ek \rho\kk }  , \\
& h^{k,i}|_{t=0} =0.
\end{aligned}
\right.
\end{align}
Following the same scheme as in the proof of Proposition \ref{proprho} we derive an a priori estimate to $\|h\kk(t) \|^p_{1,p}$. In fact, we realized this in three steps:
\\
\noindent{\it \underline{Step 1}: Estimate of $\displaystyle \frac{1}{p}\frac{d}{dt}\|h^{(k)}\|_{L^p}^p$.} From \eqref{L3-eq1}, for $i=1$, we have
\begin{align}\label{L3-eq2}
\frac{1}{p}\frac{d}{dt}\|h^{k,1}\|_{L^p}^p & =-\ie\int_{\mathbb{R}^3} |h^{k,1}|^{p-1}\text{ sgn}(h^{k,1}) \Tilde{v}^{(k-1)}\cdot \nabla h^{k,1} \ dx \nonumber\\
& \quad -\ie\int_{\mathbb{R}^3} |h^{k,1}|^{p-1}\text{ sgn}(h^{k,1}) \ \Tilde{w}^{(k-1)}\cdot \nabla u^{k-1,1} \ dx\nonumber\\
& \quad -\z\int_{\mathbb{R}^3} |h^{k,1}|^{p-1}\text{ sgn}(h^{k,1}) \ \frac{ q^{(k)}_{x_i}}{\rho^{(k)}} \ dx\nonumber\\
& \quad +\z\int_{\mathbb{R}^3} |h^{k,1}|^{p-1}\text{ sgn}(h^{k,1}) \ \frac{\sigma\kk \pi^{(k-1)}_{x_i}}{\rho\ek \rho\kk } \ dx\nonumber\\
& = A_1+A_2+A_3+A_4\,,
\end{align}
where $A_1=0$ since $div \ \Tilde{v}^{(k-1)}=0$. Also, note that
\begin{align*}
A_2 & \leq \ie\int_{\mathbb{R}^3} |h^{k,1}|^{p-1} \ |\Tilde{w}^{(k-1)}| \ |\nabla u^{k-1,1}| \ dx \\
& \leq \ie \sup_{x\in\mathbb{R}^3}\{|\nabla u^{k-1,1}|\}\int_{\mathbb{R}^3} |h^{k,1}|^{p-1} \ |\Tilde{w}^{(k-1)}|  \ dx \\
& \leq \ie \|u^{(k-1)}\|_{2,p} \|h^{k,1}\|^{p-1}_{L^p} \ \|\Tilde{w}^{(k-1)}\|_{L^p}   \\
& \leq \ie A \|h^{(k)}\|^{p-1}_{L^p} \ \|\Tilde{w}^{(k-1)}\|_{L^p} \,.
\end{align*}
In a similar way it results
\begin{align*}
A_3 & \leq \z\int_{\mathbb{R}^3} |h^{k,1}|^{p-1} \ \frac{ |q^{(k)}_{x_1}|}{|\rho^{(k)}|} \ dx \leq \z m^{-1}\int_{\mathbb{R}^3} |h^{k,1}|^{p-1} \ |q^{(k)}_{x_1}| \ dx\\
& \leq \z m^{-1} \|h^{(k)}\|^{p-1}_{L^p} \ \|\nabla q^{(k)}\|_{L^p}\,,
\end{align*}
and
\begin{align*}
A_4 & \leq \z\int_{\mathbb{R}^3} |h^{k,1}|^{p-1} \ \frac{|\sigma\kk| \ |\pi^{(k-1)}_{x_1}|}{|\rho\ek \rho\kk| } \ dx \leq \z m^{-2}\int_{\mathbb{R}^3} |h^{k,1}|^{p-1} \ |\sigma\kk| \ |\pi^{(k-1)}_{x_1}| \ dx\\
& \leq \z m^{-2}\sup_{x\in\mathbb{R}^3}\{|\pi^{(k-1)}_{x_1}|\}\int_{\mathbb{R}^3} |h^{k,1}|^{p-1} \ |\sigma\kk|  \ dx \leq \z m^{-2} \|\nabla \pi^{(k-1)}\|_{2,p} \ \|h^{k,1}\|^{p-1}_{L^p} \ \|\sigma\kk \|_{L^p} \\
& \leq \z m^{-2} L_2 \ \|h^{(k)}\|^{p-1}_{L^p} \ \|\sigma\kk \|_{L^p}\,.
\end{align*}
So, by replacing the estimates of $A_1, A_2, A_3$ and $A_4$ in \eqref{L3-eq2}, we find
\begin{align*}
\frac{1}{p}\frac{d}{dt}\|h^{k,1}\|_{L^p}^p & \leq \ie A \|h^{(k)}\|^{p-1}_{L^p} \ \|\Tilde{w}^{(k-1)}\|_{L^p} \\
& \quad +\z m^{-1} \|h^{(k)}\|^{p-1}_{L^p} \ \|\nabla q^{(k)}\|_{L^p}\\
& \quad + \z m^{-2} L_2 \ \|h^{(k)}\|^{p-1}_{L^p} \ \|\sigma\kk \|_{L^p}\,.
\end{align*}
From this, one may easily verify that the same estimate also holds for $h^{k,2}$ and $h^{k,3}$. Therefore,
\begin{align}\label{L5-eq3}
\frac{1}{p}\frac{d}{dt}\|h^{(k)}\|_{L^p}^p & \leq 3\ie A \|h^{(k)}\|^{p-1}_{L^p} \ \|\Tilde{w}^{(k-1)}\|_{L^p} \nonumber\\
& \quad +3 \z m^{-1} \|h^{(k)}\|^{p-1}_{L^p} \ \|\nabla q^{(k)}\|_{L^p}\nonumber\\
& \quad + 3 \z m^{-2} L_2 \ \|h^{(k)}\|^{p-1}_{L^p} \ \|\sigma\kk \|_{L^p}\,.
\end{align}
\\
\noindent{\it \underline{Step 2}: Estimate of $\displaystyle \frac{1}{p}\frac{d}{dt}\|\nabla h^{(k)}\|_{L^p}^p$.} One more time, from \eqref{L3-eq1}, for $i=1$, by considering the corresponding first derivative of each component of $h^{(k)}$ we deduce:
\begin{align}\label{L6-eq0}
\frac{1}{p}\frac{d}{dt}\|h^{k,1}_{x_j}\|_{L^p}^p & = -\ie\int_{\mathbb{R}^3} |h^{k,1}_{x_j}|^{p-1} \text{ sgn}(h^{k,1}_{x_j}) \ \Tilde{v}^{(k-1)}_{x_j}\cdot \nabla h^{k,1} \ dx \nonumber\\
& \quad -\ie\int_{\mathbb{R}^3} |h^{k,1}_{x_j}|^{p-1} \text{ sgn}(h^{k,1}_{x_j}) \ \Tilde{v}^{(k-1)}\cdot \nabla h^{k,1}_{x_j} \ dx \nonumber\\
& \quad -\ie\int_{\mathbb{R}^3} |h^{k,1}_{x_j}|^{p-1} \text{ sgn}(h^{k,1}_{x_j}) \ \Tilde{w}^{(k-1)}_{x_j}\cdot \nabla u^{k-1,1} \ dx \nonumber\\
& \quad -\ie\int_{\mathbb{R}^3} |h^{k,1}_{x_j}|^{p-1} \text{ sgn}(h^{k,1}_{x_j}) \ \Tilde{w}^{(k-1)}\cdot \nabla u^{k,1}_{x_j} \ dx \nonumber\\
& \quad +\z\int_{\mathbb{R}^3} |h^{k,1}_{x_j}|^{p-1} \text{ sgn}(h^{k,1}_{x_j}) \ \frac{\rho^{(k)}_{x_j} \ q^{(k)}_{x_j}}{(\rho^{(k)})^2} \ dx \nonumber\\
& \quad -\z\int_{\mathbb{R}^3} |h^{k,1}_{x_j}|^{p-1} \text{ sgn}(h^{k,1}_{x_j}) \ \frac{ q^{(k)}_{x_1 x_j}}{\rho^{(k)}} \ dx \nonumber\\
& \quad +\z\int_{\mathbb{R}^3} |h^{k,1}_{x_j}|^{p-1} \text{ sgn}(h^{k,1}_{x_j}) \ \frac{ \sigma^{(k)} \ \pi^{(k-1)}_{x_1 x_j}}{\rho^{(k)}\rho^{(k-1)}} \ dx \nonumber\\
& \quad +\z\int_{\mathbb{R}^3} |h^{k,1}_{x_j}|^{p-1} \text{ sgn}(h^{k,1}_{x_j}) \ \frac{ \sigma^{(k)}_{x_j} \ \pi^{(k-1)}_{x_1}}{\rho^{(k)}\rho^{(k-1)}} \ dx \nonumber\\
&\quad -\z\int_{\mathbb{R}^3} |h^{k,1}_{x_j}|^{p-1} \text{ sgn}(h^{k,1}_{x_j}) \ \left(\frac{ \sigma^{(k)} \ \pi^{(k-1)}_{x_1}[\rho^{(k)}_{x_j}\rho^{(k-1)}+\rho^{(k)}\rho^{(k-1)}_{x_j}]}{(\rho^{(k)}\rho^{(k-1)})^2}\right) \ dx \nonumber\\
& = B_1+B_2+B_3+B_4+B_5+B_6+B_7+B_8+B_9\,,
\end{align}
where $j=1, 2, 3$. Observe that due to $div \  \Tilde{v}^{(k-1)}=0$ we have $B_2=0$. For the other terms of \eqref{L6-eq0}, using the bounds of the Corollary \ref{coroest}, it is not difficult to find out that:
\begin{align*}
B_1 & \leq \ie \int_{\mathbb{R}^3} |h^{k,1}_{x_j}|^{p-1} \ |\Tilde{v}^{(k-1)}| \ |\nabla h^{k,1}| \ dx \leq \ie \ \|\Tilde{v}^{(k-1)}\|_{2,p} \|h^{k,1}_{x_j}\|^{p-1}_{L^p}  \ \|\nabla h^{k,1}\|_{L^p} \\
& \leq \ie \ A \ \|h^{(k)}\|^{p}_{1,p}\,,
\end{align*}
\begin{align*}
B_3 & \leq \ie\int_{\mathbb{R}^3} |h^{k,1}_{x_j}|^{p-1} \ |\Tilde{w}^{(k-1)}_{x_j}| \ |\nabla u^{k-1,1}| \ dx \leq \ie \ A \ \|h^{(k)}\|^{p-1}_{1,p} \ \|\Tilde{w}^{(k-1)}\|_{1,p}\,,
\end{align*}
\begin{align*}
B_4 & \leq \ie\int_{\mathbb{R}^3} |h^{k,1}_{x_j}|^{p-1} \ |\Tilde{w}^{(k-1)}| \ |\nabla u^{k,1}_{x_j}| \ dx \leq \ie \ A \ \|h^{(k)}\|^{p-1}_{1,p} \ \|\Tilde{w}^{(k-1)}\|_{1,p}\,,
\end{align*}
\begin{align*}
B_5 & \leq \z\int_{\mathbb{R}^3} |h^{k,1}_{x_j}|^{p-1} \ \frac{|\rho^{(k)}_{x_j}| \ |q^{(k)}_{x_j}|}{(\rho^{(k)})^2} \ dx \leq \z m^{-2}  \ L_1 \ \|h^{(k)}\|^{p-1}_{1,p} \ \|\nabla q^{(k)}\|_{1,p}\,,
\end{align*}
\begin{align*}
B_6 & \leq \z\int_{\mathbb{R}^3} |h^{k,1}_{x_j}|^{p-1} \ \frac{ |q^{(k)}_{x_1 x_j}|}{|\rho^{(k)}|} \ dx \leq \z m^{-1} \ \|h^{(k)}\|^{p-1}_{1,p} \ \|\nabla q^{(k)}\|_{1,p}\,,
\end{align*}
\begin{align*}
B_7 & \leq \z\int_{\mathbb{R}^3} |h^{k,1}_{x_j}|^{p-1} \ \frac{ |\sigma^{(k)}| \ |\pi^{(k-1)}_{x_1 x_j}|}{|\rho^{(k)}\rho^{(k-1)}|} \ dx \leq \z m^{-2} \ L_2 \ \|h^{(k)}\|^{p-1}_{1,p} \  \|\sigma^{(k)}\|_{1,p}\,,
\end{align*}
\begin{align*}
B_8 & \leq \z\int_{\mathbb{R}^3} |h^{k,1}_{x_j}|^{p-1} \ \frac{ |\sigma^{(k)}_{x_j}| \ |\pi^{(k-1)}_{x_1}|}{|\rho^{(k)}\rho^{(k-1)}|} \ dx \leq \z m^{-2} \ L_2 \ \|h^{(k)}\|^{p-1}_{1,p} \  \|\sigma^{(k)}\|_{1,p}\,,
\end{align*}
and finally
\begin{align*}
B_9 & \leq \z\int_{\mathbb{R}^3} |h^{k,1}_{x_j}|^{p-1} \ \left(\frac{ |\sigma^{(k)}| \ |\pi^{(k-1)}_{x_1}|\left[|\rho^{(k)}_{x_j}||\rho^{(k-1)}|+|\rho^{(k)}||\rho^{(k-1)}_{x_j}|\right]}{(\rho^{(k)}\rho^{(k-1)})^2}\right) \ dx\\
& \leq \z m^{-4} \ M \ \sup_{x\in\mathbb{R}^3}\{|\rho^{(k)}_{x_j}|+|\rho^{(k-1)}_{x_j}|\} \ \sup_{x\in\mathbb{R}^3}\{|\pi^{(k-1)}_{x_1}|\} \ \int_{\mathbb{R}^3} |h^{k,1}_{x_j}|^{p-1} \ |\sigma^{(k)}| \ dx\\
& \leq 2\z m^{-4} \ M L_1 L_2 \ \|h^{(k)}\|^{p-1}_{1,p} \  \|\sigma^{(k)}\|_{1,p}\,.
\end{align*}
Then, substituting the previous estimates in \eqref{L6-eq0} we infer
\begin{align}\label{L8-eq1}
\frac{1}{p}\frac{d}{dt}\|h^{k,1}_{x_j}\|_{L^p}^p & \leq \ie A \ \|h^{(k)}\|^{p}_{1,p}\nonumber\\
& \quad + \|h^{(k)}\|^{p-1}_{1,p}\left[ 2\ie \ A \ \|\Tilde{w}^{(k-1)}\|_{1,p}+\z \ \Tilde{L}_7 \ \|\nabla q^{(k)}\|_{1,p} +\z \ \Tilde{L}_8 \  \|\sigma^{(k)}\|_{1,p}\right]\,.
\end{align}
where $\Tilde{L}_7=m^{-2}  \ L_1+m^{-1}$ and $\Tilde{L}_8=2m^{-2} \ L_2+ 2m^{-4} \ M L_1 L_2$. Note that this estimate is also valid for $h^{k,2}_{x_j}$ and $h^{k,3}_{x_j}$. Therefore,
\begin{align}\label{L8-eq3}
\frac{1}{p}\frac{d}{dt}\|\nabla h^{(k)}\|_{L^p}^p & \leq 3\ie A \ \|h^{(k)}\|^{p}_{1,p}\nonumber\\
& \quad + 3\|h^{(k)}\|^{p-1}_{1,p}\left[ 2\ie A \ \|\Tilde{w}^{(k-1)}\|_{1,p}+\z \ \Tilde{L}_7 \ \|\nabla q^{(k)}\|_{1,p} +\z \ \Tilde{L}_8 \  \|\sigma^{(k)}\|_{1,p}\right]\,.
\end{align}
\\
\noindent{\it \underline{Step 3}: Conclusion.}  Combining \eqref{L5-eq3} and \eqref{L8-eq3}, we conclude that
\begin{align*}
\frac{1}{p}\frac{d}{dt}\|h^{(k)}\|_{1,p}^p & \leq c_6\left[\ie A \ \|h^{(k)}\|^{p}_{1,p}\right. \nonumber\\
& \quad \left.+\|h^{(k)}\|^{p-1}_{1,p}\left(\ie A \ \|\Tilde{w}^{(k-1)}\|_{1,p}+\z \ L_7 \ \|\nabla q^{(k)}\|_{1,p} +\z \ L_8 \  \|\sigma^{(k)}\|_{1,p}\right)\right]\,,
\end{align*}
with $L_7$ and $L_8$ defined by
\begin{align*}
L_7 & = m^{-1}+\Tilde{L}_7\\
L_8 & = m^{-2} L_2+\Tilde{L}_8\,.
\end{align*}
Hence,
\begin{align*}
\frac{d}{dt}\|h^{(k)}\|_{1,p} & \leq c_6\left[\ie A \ \|h^{(k)}\|_{1,p}\right. \nonumber\\
& \quad \left. + \ie A \ \|\Tilde{w}^{(k-1)}\|_{1,p}+\z \ L_7 \ \|\nabla q^{(k)}\|_{1,p} +\z \ L_8 \  \|\sigma^{(k)}\|_{1,p}\right]\,.
\end{align*}
Again, applying Gronwall's inequality we obtain the claim.

\end{proof}

At this point, we can finally prove the convergence of the sequence $\{\Tilde{v}^{(k)}\}_k$, which will be the key to proving the convergence of the other sequence of approximations.

\begin{proposition}\label{vk-limit}
    The sequence $\{\Tilde{v}^{(k)}\}_k$ converge in $C([0,\tau];W^{1,p}(\RR^3))$ to a random variable $\tilde{v}\in C([0, \tau]; W^{2,p}(\RR^3))$.
\end{proposition}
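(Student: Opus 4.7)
The overall plan is twofold: first show that $\{\tilde{v}^{(k)}\}_k$ is a Cauchy sequence in $C([0,\tau];W^{1,p}(\mathbb{R}^3))$ by chaining the three difference estimates from Lemmas \ref{sigmaest}, \ref{qest} and \ref{hest}, and then promote the resulting limit to a process valued in $W^{2,p}(\mathbb{R}^3)$ using the uniform bound from Proposition \ref{propbouv}.

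For the first step, the key observation is that the decomposition $\tilde{v}^{(k)}=u^{(k)}-\nabla\phi^{(k)}$ with $\Delta\phi^{(k)}=\operatorname{div} u^{(k)}$ gives $\Delta(\phi^{(k)}-\phi^{(k-1)})=\operatorname{div} h^{(k)}$, so Calder\'on--Zygmund regularity produces a constant $c>0$ with $\|\tilde{w}^{(k)}(t)\|_{1,p}\le c\,\|h^{(k)}(t)\|_{1,p}$ for every $t\in[0,\tau]$. I would then substitute the bound of Lemma \ref{sigmaest} into Lemma \ref{qest}, and feed both into Lemma \ref{hest}, to deduce a pointwise Volterra-type estimate
\begin{equation*}
\|\tilde{w}^{(k)}(t)\|_{1,p}\le \int_0^{t} F(s,\omega)\,\|\tilde{w}^{(k-1)}(s)\|_{1,p}\,ds,\qquad 0\le t\le \tau,
\end{equation*}
where the kernel $F(\cdot,\omega)\in L^1(0,\tau)$ $\mathbb{P}$-a.s.\ is built explicitly from $A$, the constants $L_1,L_2,\ldots,L_9$ and the integrable weights $\z,\ie$ on $[0,\tau]$. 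Since Proposition \ref{propbouv} implies $\sup_{t\le\tau}\|\tilde{w}^{(0)}(t)\|_{1,p}\le 2A$, iterating the Volterra inequality $k$ times yields the pathwise bound
\begin{equation*}
\sup_{0\le t\le \tau}\|\tilde{w}^{(k)}(t)\|_{1,p}\le 2A\,\frac{\bigl(\int_0^{\tau} F(s,\omega)\,ds\bigr)^k}{k!},
\end{equation*}
which is summable in $k$. Hence $\{\tilde{v}^{(k)}\}_k$ is Cauchy in $C([0,\tau];W^{1,p}(\mathbb{R}^3))$, and I denote its limit by $\tilde{v}$; progressive measurability and the divergence-free property are preserved under the uniform limit.

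The main obstacle I expect is upgrading the limit to $C([0,\tau];W^{2,p}(\mathbb{R}^3))$. From Proposition \ref{propbouv} I have $\sup_k\sup_{t\le\tau}\|\tilde{v}^{(k)}(t)\|_{2,p}\le A$, so by reflexivity of $W^{2,p}(\mathbb{R}^3)$ a subsequence satisfies $\tilde{v}^{(k)}(t)\rightharpoonup \tilde{v}(t)$ in $W^{2,p}$ for every $t\in[0,\tau]$, the weak limit being identified by the strong $W^{1,p}$-convergence; weak lower semi-continuity then gives $\|\tilde{v}(t)\|_{2,p}\le A$ uniformly, so $\tilde{v}\in L^\infty(0,\tau;W^{2,p})$ and $\tilde{v}$ is at least weakly continuous into $W^{2,p}$. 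To reach strong continuity I would pass to the limit in the random equation \eqref{eqvtilde} satisfied by the iterates, re-run the energy argument underlying Proposition \ref{propu} directly on $\tilde{v}$ to show that $t\mapsto\|\tilde{v}(t)\|_{2,p}$ is continuous, and then invoke the Radon--Riesz property (weak convergence plus norm convergence implies strong convergence) in the uniformly convex space $W^{2,p}(\mathbb{R}^3)$ to conclude $\tilde{v}\in C([0,\tau];W^{2,p}(\mathbb{R}^3))$.
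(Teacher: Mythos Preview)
Your Cauchy argument in $C([0,\tau];W^{1,p})$ matches the paper's exactly: both chain Lemmas \ref{sigmaest}, \ref{qest}, \ref{hest} through the Leray-projection bound $\|\tilde w^{(k)}\|_{1,p}\le c\,\|h^{(k)}\|_{1,p}$ to obtain a Volterra inequality, iterate it, and sum the resulting geometric-over-factorial series. The only difference is cosmetic: the paper writes the kernel simply as $L_{10}\,\ie(s)$ rather than your generic $F(s,\omega)$.

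Where you diverge is in upgrading the limit to $C([0,\tau];W^{2,p})$. The paper invokes Banach--Alaoglu on the bounded sequence (Proposition \ref{propbouv}) together with Mazur's theorem on the convex ball $B^2(A)\subset C([0,\tau];W^{2,p})$, identifies the resulting weak limit with the strong $W^{1,p}$ limit, and stops there; strong time-continuity into $W^{2,p}$ is left essentially implicit in that compactness/convexity step. Your route---pointwise weak $W^{2,p}$ limits, weak lower semicontinuity to land in $L^\infty_t W^{2,p}_x$, then passage to the limit in the equation followed by a norm-continuity plus Radon--Riesz argument---is more explicit about how strong continuity in $W^{2,p}$ is actually recovered, but it front-loads work (solving the limit equation) that the paper postpones to Proposition \ref{solution-u} and Theorem \ref{existence}. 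Both routes reach the stated conclusion; the paper's is shorter, while yours makes the $C_t W^{2,p}_x$ membership transparently justified at the cost of anticipating later arguments.
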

\begin{proof}
 We have deduce from \eqref{eqphi}
\begin{align*}
    \| \Tilde{w}\kk \|_{1,p}&\leq \| h\kk \|_{1,p}+ \| \nabla (\phi\kk-\phi\ek) \|_{1,p}\leq c_9 \| h\kk \|_{1,p},
\end{align*}
using the previous lemmas 
\begin{align*}
    \|\Tilde{w}\kk(t)\|_{1,p}\leq L_{10} \intt \ie(s) \| \Tilde{w}\ek(s) \|_{1,p} \,ds,
\end{align*}
from which it follows that
\begin{align*}
    \|\Tilde{w}\kk(t)\|_{1,p}&\leq L_{10}^{k-1} \frac{t^{k-1}}{(k-1)!}\sup_{0\leq s\leq t}(\ie(s))^{k-1} \sup_{0\leq s\leq t}\| \Tilde{w}^1(s) \|_{1,p},
\end{align*}
then 
\begin{align*}
    \sup_{0\leq t\leq \tau}\|\Tilde{w}\kk(t)\|_{1,p}
    &\leq A\,L_{10}^{k-1} \sup_{0\leq t\leq \tau}(\ie(s))^{k-1} \frac{\tau^{k-1}}{(k-1)!} ,
\end{align*}
therefore
\begin{align}\label{wk-series}
\sum_{k=1}^\infty\|\Tilde{w}\kk\|_{C([0,\tau];W^{1,p}(\mathbb{R}^3))}<\infty.
\end{align}  
This implies $\{\Tilde{v}^{(k)}\}_k$ is a Cauchy sequence and converges to an element $\Bar{v}$ in $C([0,\tau];W^{1,p}(\RR^3))$. By Proposition \ref{propbouv} this sequence is also bounded in $C([0, \tau]; W^{2,p}(\RR^3))$, i.e. $\Tilde{v}^{(k)}$ belong to $B^2(A)$, the ball in $C([0, \tau]; W^{2,p})$ of radius $A$. Then, by the Banach-Alaoglu theorem, there exists a subsequence $\{\Tilde{v}^{(k_j)}\}_j$ that weakly converges to an element $\Tilde{v} \in C([0, \tau]; W^{2,p}(\RR^3))$. Since $B^2(A)$ is a convex set, by Mazur's theorem this limit $\Tilde{v}$ also belongs to this ball.  Finally, we can deduce that $\Bar{v} = \Tilde{v}$ as elements in $C([0, \tau]; W^{1,p}(\RR^3))$  since they are the weak limit in this space of the subsequence considered.
\end{proof}

Now, we present the convergence results for the remaining sequences of approximations and prove that the corresponding limits are the desired solutions to our problems.

\begin{lemma}\label{limit-rhok}
    The sequence $\{\rho^{(k)}\}_k$ converges in $C([0,\tau]\times \mathbb{R}^3)$ to a random variable $\rho\in C([0, \tau]\times\mathbb{R}^3)$.
\end{lemma}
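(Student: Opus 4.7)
My strategy is to show that $\{\rho^{(k)}\}_k$ is Cauchy in $C([0,\tau]\times\RR^3)$ via a telescoping-series argument. With $\sigma^{(j)} := \rho^{(j)} - \rho^{(j-1)}$, for $k>l$ we have $\rho^{(k)}-\rho^{(l)} = \sum_{j=l+1}^{k}\sigma^{(j)}$, so it is enough to prove absolute summability of $\{\sigma^{(j)}\}_j$ in the sup-norm on $[0,\tau]\times\RR^3$.

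For each increment, I would invoke Lemma \ref{sigmaest} to get
\[
\sup_{0\le t\le \tau}\|\sigma^{(k)}(t)\|_{1,p} \;\le\; L_5\,\tau\,\C\, \sup_{0\le s\le \tau}\|\Tilde{w}^{(k-1)}(s)\|_{1,p},
\]
where $\C := \sup_{0\le s\le \tau}\ie(s)$ is almost surely finite by pathwise continuity of $\z = \text{exp}(\W)$. In the proof of Proposition \ref{vk-limit} the super-exponential decay
\[
\sup_{0\le s\le \tau}\|\Tilde{w}^{(k)}(s)\|_{1,p} \;\le\; A\, L_{10}^{k-1}\, \C^{k-1}\, \frac{\tau^{k-1}}{(k-1)!}
\]
has already been established, so combining the two displays immediately yields $\sum_{k}\sup_{0\le t\le \tau}\|\sigma^{(k)}(t)\|_{1,p}<\infty$.

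To upgrade this $W^{1,p}$-summability to sup-norm summability, I would use that $p>3$ and the Sobolev embedding $W^{1,p}(\RR^3)\hookrightarrow C_b(\RR^3)$, which furnishes a constant $C_0$ with
\[
\|\sigma^{(k)}\|_{C([0,\tau]\times\RR^3)} \;\le\; C_0\,\sup_{0\le t\le \tau}\|\sigma^{(k)}(t)\|_{1,p}.
\]
Hence $\sum_k\|\sigma^{(k)}\|_{C([0,\tau]\times\RR^3)}<\infty$, so $\{\rho^{(k)}\}_k$ is Cauchy in the Banach space $C_b([0,\tau]\times\RR^3)$ and converges uniformly to a continuous function $\rho$. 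Continuity of the limit in $(t,x)$ is automatic from uniform convergence of the continuous iterates $\rho^{(k)}\in C^{1,1}([0,\tau]\times\RR^3)$ from Proposition \ref{proprho}, and the uniform pointwise bounds $m\le \rho^{(k)}\le M$ pass to this limit.

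There is no serious obstacle; the only subtlety is that everything is interpreted pathwise, so the constants $\C$, $L_{10}$, etc.\ are random but a.s.\ finite, which is consistent with the framework of Definition \ref{deflocalsolution}.
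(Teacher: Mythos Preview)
Your proposal is correct and follows essentially the same approach as the paper: bound $\|\sigma^{(k)}(t)\|_{1,p}$ by $\|\Tilde{w}^{(k-1)}\|_{C([0,\tau];W^{1,p})}$ via Lemma~\ref{sigmaest}, invoke the summability of the $\Tilde{w}^{(k)}$ established in Proposition~\ref{vk-limit}, and then pass to the sup-norm using the Sobolev/Morrey embedding $W^{1,p}(\RR^3)\hookrightarrow C_b(\RR^3)$ for $p>3$. If anything, your telescoping argument is slightly cleaner than the paper's phrasing, which only states $\|\sigma^{(k)}\|\to 0$ before asserting Cauchy, whereas you make the absolute summability explicit.
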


\begin{proof}
First, remember that $\sigma\kk=\rho^{(k)}-\rho^{(k-1)}$. Now, by Lemma \ref{sigmaest}, we have for $t\in[0,\tau]$
\begin{align}\label{estrhow}
\|\sigma\kk(t)\|_{1,p} & \leq L_5 \intt \ie(s) \| \Tilde{w}\ek(s)\|_{1,p}\,ds \leq C \sup_{t\in[0,\tau]} \| \Tilde{w}\ek(t)\|_{1,p}\\ \nonumber
& \leq C \ \| \Tilde{w}^{(k-1)}\|_{C([0,\tau]; W^{1,p}(\mathbb{R}^3))}\,,
\end{align}
that is,
\begin{align*}
\|\sigma\kk\|_{C([0,\tau]; W^{1,p}(\mathbb{R}^3))}\leq C \ \| \Tilde{w}^{(k-1)}\|_{C([0,\tau]; W^{1,p}(\mathbb{R}^3))}\,.
\end{align*}
And by Eq. \eqref{wk-series} it is clear that
\begin{align}\label{C2-eq1}
\lim_{k\to \infty} \|\sigma\kk\|_{C([0,\tau]; W^{1,p}(\mathbb{R}^3))}=0\,.
\end{align}
To follow, observe that, for each $t\in[0,\tau]$, $\sigma^{(k)}(t)\in W^{1,p}(\mathbb{R}^3)$, with $p>3$. Then, Morrey's inequality implies 
$\sigma^{(k)}(t)\in C^{0,1-\frac{3}{p}}(\mathbb{R}^3)$ and 
\begin{align*}
\|\sigma\kk(t)\|_{C^{0,1-\frac{3}{p}}(\mathbb{R}^3)}\leq C_1\|\sigma\kk(t)\|_{1,p}\,,
\end{align*}
where $C_1$ is a positive constant depending on $p$. Consequently,
\begin{align*}
\|\sigma\kk\|_{C([0,\tau]\times\mathbb{R}^3)}\leq C_1\| \sigma^{(k)}\|_{C([0,\tau]; W^{1,p}(\mathbb{R}^3))}\,.
\end{align*}
From the limit \eqref{C2-eq1}, we get
\begin{align*}
\lim_{k\to \infty} \|\sigma\kk\|_{C([0,\tau]\times\mathbb{R}^3)}=0\,.
\end{align*}
Hence, the sequence $\{\rho^{(k)}\}_k$ is a cauchy sequence and converges to a process $\rho \in C([0,\tau]\times\mathbb{R}^3)$.
\end{proof}

\begin{proposition}\label{solution-rho}
    The random variable $\rho\in C([0, \tau]\times\mathbb{R}^3)$ with $\nabla \rho \in C([0, \tau], W^{1,p}(\mathbb{R}^3))$ found in the Lemma \ref{limit-rhok} is a weak solution of the equation
    \begin{align}\label{equation-rho}
    \left\{
    \begin{aligned}
       & \rho_t +  \ie  \Tilde{v}\cdot \nabla \rho =0\\
       & \rho|_{t=0}=\rho_0(x)
       \end{aligned}
       \right.\,,
 \end{align}
 where $\Tilde{v}$ is the random variable from the Proposition \ref{vk-limit}.

\end{proposition}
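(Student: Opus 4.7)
The plan is to pass to the limit $k\to\infty$ in the weak form of \eqref{iterho}. For $\varphi\in C_c^\infty(\RR^3)$, multiplying \eqref{iterho} by $\varphi$, integrating over $\RR^3$, and using $\text{div}\,\Tilde v^{(k-1)}=0$ (which holds by construction via \eqref{defv} and \eqref{eqphi}) to move the spatial derivative onto $\varphi$ produces
\begin{align*}
\int_{\RR^3}\rho^{(k)}(t)\varphi\,dx = \int_{\RR^3}\rho_0\varphi\,dx + \int_0^t\int_{\RR^3} \ie(s)\,\rho^{(k)}(s)\,\Tilde v^{(k-1)}(s)\cdot\nabla\varphi\,dx\,ds,
\end{align*}
valid for every $t\in[0,\tau]$. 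The left-hand side converges to $\int_{\RR^3}\rho(t)\varphi\,dx$ from the uniform convergence $\rho^{(k)}\to\rho$ on $[0,\tau]\times\RR^3$ supplied by Lemma \ref{limit-rhok}. For the nonlinear term I would combine three ingredients: $\rho^{(k)}\to\rho$ uniformly together with the pointwise bound $m\le\rho^{(k)}\le M$; the convergence $\Tilde v^{(k-1)}\to\Tilde v$ in $C([0,\tau];W^{1,p}(\RR^3))$ from Proposition \ref{vk-limit}, which in particular yields strong $L^p$ convergence on $\text{supp}\,\varphi$; and the pathwise bound $\ie(s)\le \C(\omega)$ on $[0,\tau]$. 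Dominated convergence on $[0,t]\times\text{supp}\,\varphi$ then delivers the weak identity for $\rho$, and the initial condition $\rho|_{t=0}=\rho_0$ is inherited pointwise from $\rho^{(k)}|_{t=0}=\rho_0$.

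For the additional Sobolev regularity, I would argue as in the second half of the proof of Proposition \ref{vk-limit}. Corollary \ref{coroest} provides the uniform bound $\|\nabla\rho^{(k)}\|_{C([0,\tau];W^{1,p}(\RR^3))}\le L_1$, so the sequence sits in a closed ball of the ambient space $C([0,\tau];W^{2,p}(\RR^3))$. Banach-Alaoglu extracts a weakly convergent subsequence whose weak limit must coincide with $\rho$, because we already know $\rho^{(k)}\to\rho$ strongly in $C([0,\tau]\times\RR^3)$; Mazur's lemma then promotes weak-limit membership to membership in the convex closed ball, yielding $\rho\in C([0,\tau];W^{2,p}(\RR^3))$.

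The main point I expect to require care is the preservation of the divergence-free condition on the limit $\Tilde v$, which is needed both to justify the integration by parts above and for consistency with \eqref{equation-rho}. Since $\text{div}\,\Tilde v^{(k)}=0$ for every $k$ by construction and the divergence operator is continuous from $W^{1,p}(\RR^3)$ into $L^p(\RR^3)$, the convergence of Proposition \ref{vk-limit} gives $\text{div}\,\Tilde v=0$ in $L^p$. Once this observation is in place, the remainder of the proof is a standard compactness and continuity pass-to-the-limit argument of the type already employed in Proposition \ref{vk-limit}.
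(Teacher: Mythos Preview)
Your argument for the weak formulation is essentially the paper's: write the integral identity for $\rho^{(k)}$ (after integration by parts using $\text{div}\,\Tilde v^{(k-1)}=0$), use the uniform convergence $\rho^{(k)}\to\rho$ from Lemma \ref{limit-rhok} on the left, and handle the nonlinear term by splitting $\rho^{(k)}\Tilde v^{(k-1)}-\rho\Tilde v=(\rho^{(k)}-\rho)\Tilde v^{(k-1)}+\rho(\Tilde v^{(k-1)}-\Tilde v)$ together with H\"older's inequality. The paper carries out exactly this computation, without your additional remarks on $\text{div}\,\Tilde v=0$ or the inheritance of the initial datum.

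There is, however, a genuine gap in your regularity paragraph. The bound from Corollary \ref{coroest} is $\|\nabla\rho^{(k)}\|_{1,p}\le L_1$, which controls $\nabla\rho^{(k)}$ in $W^{1,p}$; it does \emph{not} place $\rho^{(k)}$ in a ball of $W^{2,p}(\RR^3)$. Indeed $\rho^{(k)}\ge m>0$ everywhere on $\RR^3$, so $\rho^{(k)}\notin L^p(\RR^3)$ and a fortiori $\rho^{(k)}\notin W^{2,p}(\RR^3)$. The $W^{2,p}$ assertion in the statement is therefore a slip (compare Definition \ref{deflocalsolution}, which only requires $\nabla\rho\in C([0,\tau];W^{1,p})$); if you want to recover the correct regularity, your Banach--Alaoglu/Mazur argument should be run on the sequence $\nabla\rho^{(k)}$ in $C([0,\tau];W^{1,p}(\RR^3))$, identifying its weak limit with $\nabla\rho$ via the strong convergence already established. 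The paper's own proof does not address this regularity claim at all.
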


\begin{proof}
We know that the process $\rho^{(k)}$ is a weak solution of the Eq. \eqref{iterho} if, for all $\varphi\in C^{\infty}_c (\mathbb{R}^3)$, it verifies the integral equation given by
\begin{align*}
\int_{\mathbb{R}^3} & \rho^{(k)}(t\wedge \tau,x)\varphi(x) \ dx=\int_{\mathbb{R}^3}\rho_0(x) \varphi(x)  dx\\
& \quad \quad \quad\quad \quad \quad +\int_0^{t\wedge \tau}\int_{\mathbb{R}^3} \ie(s) \rho^{(k)}(s,x)\Tilde{v}^{(k-1)}(s,x)\cdot \nabla \varphi(x)\, dx \,ds\,.
\end{align*}
Thereby, taking the limit in the above equation, we obtain
\begin{align}\label{C3-eq1}
& \left|\int_{\mathbb{R}^3} \rho^{(k)}(t\wedge \tau,x)\varphi(x) \ dx  -\int_{\mathbb{R}^3}  \rho(t\wedge \tau,x)\varphi(x) \ dx\right|\nonumber\\
& \quad\quad\quad\quad\quad \leq \int_{\mathbb{R}^3} |\rho^{(k)}(t\wedge \tau,x)-\rho(t\wedge \tau,x)| \  |\varphi(x)| \ dx \nonumber\\
& \quad\quad\quad\quad\quad \leq \|\rho^{(k)}-\rho\|_{C([0,\tau]\times\mathbb{R}^3)} \|\varphi\|_{L^{1}(\mathbb{R}^3)}  \longrightarrow 0\,,\quad \text{ as } \quad k \to \infty\,.
\end{align}
In the same way, notice that
\begin{align*}
& \left|\int_0^{t\wedge \tau}\int_{\mathbb{R}^3} \ie(s) \rho^{(k)}(s,x)\Tilde{v}^{(k-1)}(s,x)\cdot \nabla \varphi(x)\, dx \,ds-\int_0^{t\wedge \tau}\int_{\mathbb{R}^3} \ie(s) \rho(s,x)\Tilde{v}(s,x)\cdot \nabla \varphi(x)\, dx \,ds\right|\\
& \quad\quad\quad\quad\quad \leq \int_0^{t\wedge \tau}\int_{\mathbb{R}^3} \ie(s) |\rho^{(k)}(s,x)-\rho(s,x)| \ |\Tilde{v}^{(k-1)}(s,x)| \ |\nabla \varphi(x)|\, dx \,ds\\
& \quad\quad\quad\quad\quad + \int_0^{t\wedge \tau}\int_{\mathbb{R}^3} \ie(s) |\rho(s,x)| \ |\Tilde{v}^{(k-1)}(s,x)-\Tilde{v}(s,x)| \ |\nabla \varphi(x)|\, dx \,ds\\
& \quad\quad\quad\quad\quad = J_1+J_2\,.
\end{align*}
Thus, by H\"{o}lder's inequality and considering $\frac{1}{p}+\frac{1}{q}=1$, we have
\begin{align}\label{C3-eq2}
J_1 & \leq \|\rho^{(k)}-\rho\|_{C([0,\tau]\times\mathbb{R}^3)} \int_0^{t\wedge \tau}\int_{\mathbb{R}^3} \ie(s) \ |\Tilde{v}^{(k-1)}(s,x)| \ |\nabla \varphi(x)|\, dx \,ds\nonumber\\
& \leq \|\rho^{(k)}-\rho\|_{C([0,\tau]\times\mathbb{R}^3)} \sup_{s\in[0,t\wedge \tau]}\{\ie(s)\} \int_0^{t\wedge \tau} \|\Tilde{v}^{(k-1)}(s)\|_{L^p(\mathbb{R}^3)} \ \|\nabla \varphi\|_{L^q(\mathbb{R}^3)}\,ds \nonumber\\
& \leq T \ \|\rho^{(k)}-\rho\|_{C([0,\tau]\times\mathbb{R}^3)} \sup_{s\in[0, \tau]}\{\ie(s)\}  \sup_{s\in[0,\tau]} \{\|\Tilde{v}^{(k-1)}(t)\|_{2,p}\} \ \|\nabla \varphi\|_{L^q(\mathbb{R}^3)}\nonumber\\
& \leq TA \ \|\rho^{(k)}-\rho\|_{C([0,\tau]\times\mathbb{R}^3)} \sup_{s\in[0,\tau]}\{\ie(s)\}  \ \|\nabla \varphi\|_{L^q(\mathbb{R}^3)} \longrightarrow 0\,,\quad \text{ as } \quad k \to \infty\,.
\end{align}
Also, it is not difficult to see that
\begin{align}\label{C3-eq3}
J_2 & \leq M T \sup_{s\in[0, \tau]}\{\ie(s)\} \ \|\nabla \varphi\|_{L^q(\mathbb{R}^3)} \|\Tilde{v}^{(k-1)}-\Tilde{v}\|_{C([0,\tau]; W^{1,p}(\mathbb{R}^3))} \nonumber\\
& \quad\quad\longrightarrow 0\,,\quad \text{ as } \quad k \to \infty\,.
\end{align}
From \eqref{C3-eq1}, \eqref{C3-eq2} and \eqref{C3-eq3}, we conclude that $\rho$ is a weak solution of the Eq. \eqref{equation-rho}.
\end{proof}

We continue now dealing with the convergence of the sequence $\{\nabla \pi^{(k)}\}_k$.

\begin{proposition}\label{limit-pik}
The sequence $\{\nabla \pi^{(k)}\}_k$ converges in $C([0,\tau]; W^{1,p}(\RR^3))$ to a random variable $\nabla \pi \in C([0,\tau]; W^{2,p}(\RR^3))$.  Moreover, $\nabla \pi$ is a weak solution to the problem
 \begin{align}\label{div}
     \text{div}( \rho^{-1}(t)   \nabla \pi(t))=-\z^{-2}(t)\text{div}(\Tilde{v}(t)\cdot \nabla \Tilde{v}(t)),
 \end{align}
 for every $t\in [0,\tau]$.
\end{proposition}
\begin{proof}
First, remember $q\kk=\pi\kk-\pi\ek$. By Lemma \ref{qest} and Eq. \eqref{estrhow} we obtain for $t\in [0, \tau]$ that
\begin{align*}
    \| \nabla q\kk(t)\|_{1,p}&\leq L_6\left(   \|\sigma\kk(t)\|_{1,p}+\z^{-2}(t)\|\Tilde{w}\ek(t)\|_{1,p}\right)\\
    &\leq L_6 \ \left( C+ \sup_{t\in[0,\tau]}\{\z^{-2}(t)\}\right) \|\Tilde{w}^{(k-1)}\|_{C([0,\tau]; W^{1,p}(\mathbb{R}^3))} \\
    &=C_1 \ \|\Tilde{w}^{(k-1)}\|_{C([0,\tau]; W^{1,p}(\mathbb{R}^3))}.
\end{align*}
In particular,
\begin{align*}
    \| \nabla q\kk\|_{C([0,\tau]; W^{1,p}(\mathbb{R}^3))}\leq C_1 \ \|\Tilde{w}^{(k-1)}\|_{C([0,\tau]; W^{1,p}(\mathbb{R}^3))}.
\end{align*}
This inequality combined with the result of Eq. \eqref{wk-series} implies that $\{\nabla \pi^{(k)}\}_k$ is a Cauchy sequence in $C([0,\tau]; W^{1,p}(\RR^3))$. Then, there exists a limit $\nabla \pi \in C([0,\tau]; W^{1,p}(\RR^3))$. Since the sequence is also bounded in $C([0,\tau]; W^{2,p}(\RR^3))$ (see Eq. \eqref{esp1}), with the same argument as in the proof of Proposition \ref{vk-limit}, we can conclude that $\nabla \pi \in C([0,\tau]; W^{2,p}(\RR^3)) $.

\medskip

Now, it only remains to prove the limit $\nabla \pi$ is a solution as desired.
First, recall the process $\nabla \pi\kk$ is a weak solution to Problem \ref{itep}. That is, it verifies 
\begin{equation}\label{weaksolution-pk}
    \intr \frac{\nabla \pi\kk(x)}{\rho\kk(x)}\cdot \nabla \varphi(x)\,dx=-\z^{-2}\intr \text{div}\left(\Tilde{v}\kk(x)\cdot \nabla \Tilde{v}\kk(x)  \right) \varphi(x)\,dx,
\end{equation}
for all $\varphi\in C^{\infty}_c (\mathbb{R}^3)$ and  $t\in [0,\tau]$, which is omitted in our notation just for simplicity.
Consequently, we need to show that the corresponding limits also fulfill this equation.
First, conveniently using Lemma \ref{limit-rhok} and H\"older's inequality, the term on the left side converges as wanted, that is
\begin{align*}
   &\left| \intr  \frac{\nabla \pi\kk(x)}{\rho\kk(x)} \cdot \nabla \varphi(x)\,dx-  \intr \frac{\nabla \pi(x)}{\rho(x)}\cdot \nabla \varphi(x)\,dx\right|\\  &\quad\quad\leq\intr \left|\frac{\nabla \pi\kk(x)-\nabla \pi(x)}{\rho\kk(x)}\right|\left|\nabla \varphi (x)\right|\,dx +\intr \left|\frac{\rho\kk(x)-\rho(x)}{\rho\kk(x) \rho(x)}\right|\left|\nabla \pi(x)\right| \left|\nabla \varphi(x)\right|\,dx\\
   &\quad\quad\leq m^{-1}\intr \left|\nabla \pi\kk(x)-\nabla \pi(x)\right|\left|\nabla \varphi (x)\right|\,dx +m^{-2}\supx \left|\rho\kk(x)-\rho(x)\right|\intr \left|\nabla \pi(x)\right| \left|\nabla \varphi(x)\right|\,dx\\
   &\quad\quad\leq\left( m^{-1} \|\nabla \pi\kk-\nabla \pi\|_{L^p(\RR^3)}+ m^{-2}\|\rho^{(k)}-\rho\|_{C(\mathbb{R}^3)}\|\nabla \pi\|_{L^p(\RR^3)}\right)\|\nabla \varphi \|_{L^q(\RR^3)}\\
   & \quad\quad\longrightarrow 0\,,\quad \text{ as } \quad k \to \infty\,.
\end{align*}
Next, we can bound the right side term of Eq.\eqref{weaksolution-pk} as follows
\begin{align*}
   &\left|\z^{-2}\intr \text{div}\left(\Tilde{v}\kk(x)\cdot \nabla \Tilde{v}\kk(x)  \right) \varphi(x)\,dx - \z^{-2}\intr \text{div}\left(\Tilde{v}(x)\cdot \nabla \Tilde{v}(x)  \right)\varphi(x)\,dx\right|\\
   &=\left| \z^{-2}\sum_{i,j=1}^3 \intr \Tilde{v}^{(k-1),i}_{x_j}(x)\Tilde{v}^{(k-1),j}_{x_i}(x) \   \varphi(x)\,dx-\z^{-2}\sum_{i,j=1}^3 \intr \Tilde{v}^{i}_{x_j}(x)\Tilde{v}^{j}_{x_i}(x) \   \varphi(x)\,dx. \right|\\
   &\leq \z^{-2}\sum_{i,j=1}^3 \intr \left|\Tilde{v}^{(k-1),i}_{x_j}(x)\left(\Tilde{v}^{(k-1),j}_{x_i}(x)-\Tilde{v}^{j}_{x_i}(x) \right)\right||\varphi(x)|\,dx\\&+\z^{-2}\sum_{i,j=1}^3 \intr \left|\left(\Tilde{v}^{(k-1),i}_{x_j}(x)-\Tilde{v}^{i}_{x_j}(x) \right)\Tilde{v}^{j}_{x_i}(x)\right||\varphi(x)|\,dx= J_1+J_2.
\end{align*}
Using embedding theorems, Hölder's inequality and Proposition \ref{vk-limit}, we obtain 
\begin{align*}
     J_1 &\leq \z^{-2}  \sum_{i,j=1}^3 \supx |\Tilde{v}^{(k-1),i}_{x_j}| \ \|\Tilde{v}^{(k-1),j}_{x_i}-\Tilde{v}^{j}_{x_i}\|_{L^p(\RR^3)} \|\varphi\|_{L^q(\RR^3)}\\
   &\leq \z^{-2} \, \|\Tilde{v}\ek \|_{W^{2,p}(\RR^3)}\| \Tilde{v}\ek-\Tilde{v} \|_{ W^{1,p}(\RR^3)}\|\varphi\|_{L^q(\RR^3)}\\
   & \quad\quad\longrightarrow 0\,,\quad \text{ as } \quad k \to \infty .
\end{align*}
With a similar estimation, it is easy to show  that $J_2$ also converges to zero. Finally, $\nabla \pi$ satisfies Eq. \eqref{div} for every $t\in [0,\tau]$, as claimed.
\end{proof}

In the following proposition, we will adopt the same proof scheme as in the other convergence results discussed earlier. We will omit several steps, appropriately referencing each with the corresponding arguments previously used.

\begin{proposition}\label{solution-u}
 The sequence $\{ u^{(k)}\}_k$ converges in $C([0,\tau]; W^{1,p}(\RR^3))$ to a random variable $u \in C([0,\tau]; W^{2,p}(\RR^3))$. Moreover, $u$ is a weak solution to the problem
\begin{equation}\label{u}
\begin{cases}  
 u_t + \ie \Tilde{v}\cdot \nabla u+ \z\frac{\nabla \pi}{\rho}=0,\\
u|_{t=0} =v_0(x). &
\end{cases}
\end{equation}   
\end{proposition}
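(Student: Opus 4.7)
The plan is to mirror the strategy from Proposition \ref{vk-limit} and the subsequent convergence results for $\{\rho^{(k)}\}_k$ and $\{\nabla p^{(k)}\}_k$, combining the control on the differences $h^{(k)} = u^{(k)} - u^{(k-1)}$ with a weak compactness argument to upgrade the regularity of the limit. The first step is to obtain strong convergence of $\{u^{(k)}\}_k$ in $C([0,\tau]; W^{1,p}(\RR^3))$. By Lemma \ref{hest}, combined with Lemmas \ref{sigmaest} and \ref{qest}, we may control $\|h^{(k)}(t)\|_{1,p}$ on $[0,\tau]$ by a constant multiple of $\|\tilde{w}^{(k-1)}\|_{C([0,\tau];W^{1,p}(\RR^3))}$ (absorbing the factors $z(s)$ and $z^{-1}(s)$ into the constant by taking suprema over $[0,\tau]$, and using the elementary bound $\|\sigma^{(k)}\|_{1,p} \lesssim \|\tilde{w}^{(k-1)}\|_{1,p}$ derived in the proof of Lemma \ref{limit-rhok}). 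Invoking the series bound \eqref{wk-series} then yields $\sum_k \|h^{(k)}\|_{C([0,\tau];W^{1,p}(\RR^3))} < \infty$, so $\{u^{(k)}\}_k$ is Cauchy in $C([0,\tau]; W^{1,p}(\RR^3))$ and converges to some $u$ in this space.

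To upgrade the regularity to $W^{2,p}$, I would exploit the uniform bound $\sup_k \|u^{(k)}\|_{C([0,\tau];W^{2,p}(\RR^3))} \leq c_3^{-1} A$ implied by Proposition \ref{propu} together with $\|\tilde{v}^{(k)}\|_{2,p} \leq c_3 \|u^{(k)}\|_{2,p}$ (used in Proposition \ref{propbouv}) and the a priori estimate of Proposition \ref{propu}. Then, exactly as in Proposition \ref{vk-limit}, Banach--Alaoglu extracts a weakly convergent subsequence in $C([0,\tau]; W^{2,p}(\RR^3))$, Mazur's theorem places its limit in the closed convex ball of radius $c_3^{-1}A$, and uniqueness of the $W^{1,p}$ limit forces both limits to coincide, so $u \in C([0,\tau]; W^{2,p}(\RR^3))$.

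It remains to pass to the limit in the weak formulation
\begin{equation*}
\int_{\RR^3} u^{(k)}(t\wedge\tau,x)\varphi(x)\,dx = \int_{\RR^3} v_0(x)\varphi(x)\,dx - \int_0^{t\wedge\tau}\int_{\RR^3}\Bigl[z^{-1}(s)\tilde{v}^{(k-1)}\cdot\nabla u^{(k)} + z(s)\frac{\nabla p^{(k)}}{\rho^{(k)}}\Bigr]\varphi(x)\,dx\,ds,
\end{equation*}
for each $\varphi \in C^\infty_c(\RR^3)$. The linear-in-$u^{(k)}$ boundary term is handled exactly as in \eqref{C3-eq1}. For the transport term, a triangle inequality splits it into $\int z^{-1}(\tilde{v}^{(k-1)} - \tilde{v})\cdot\nabla u^{(k)}\,\varphi$ and $\int z^{-1}\tilde{v}\cdot\nabla(u^{(k)} - u)\,\varphi$; the first is controlled by $\|\tilde{v}^{(k-1)} - \tilde{v}\|_{C([0,\tau];W^{1,p})}$ times the uniform bound $A$ on $\|u^{(k)}\|_{2,p}$ (after a H\"older estimate with conjugate exponent $q$), while the second uses the $W^{1,p}$ strong convergence of $u^{(k)}$. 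For the pressure term, the same manipulation as in the previous proposition handles $\nabla p^{(k)}/\rho^{(k)} \to \nabla p/\rho$ strongly in $L^p$, using the uniform lower bound $\rho^{(k)} \geq m$, the $C([0,\tau]\times \RR^3)$ convergence of $\rho^{(k)}$, and the strong $L^p$ convergence of $\nabla p^{(k)}$.

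The main obstacle is the transport term $z^{-1}\tilde{v}^{(k-1)}\cdot\nabla u^{(k)}$: the strong convergence in the weaker $W^{1,p}$ topology is what makes the product pass to the limit, and one must carefully exploit $3 < p < \infty$ together with the uniform $W^{2,p}$ bound on $u^{(k)}$ (and Sobolev embedding into $L^\infty$ for $\nabla u^{(k)}$ if needed) to close the estimate. Once this is done, the limit equation holds $\P$-a.s.\ for every $t \in [0,\tau]$ and every test function, giving the weak solution $u$ to \eqref{u}.
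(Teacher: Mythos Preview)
Your proposal is correct and follows essentially the same route as the paper: establish that $\{u^{(k)}\}_k$ is Cauchy in $C([0,\tau];W^{1,p})$ via Lemma~\ref{hest} and \eqref{wk-series}, upgrade to $W^{2,p}$ by the uniform bound from Proposition~\ref{propu} together with the Banach--Alaoglu/Mazur argument of Proposition~\ref{vk-limit}, and then pass to the limit in the weak formulation termwise. Two small remarks: first, the inequality $\|\tilde v^{(k)}\|_{2,p}\le c_3\|u^{(k)}\|_{2,p}$ goes the wrong way to yield $\|u^{(k)}\|_{2,p}\le c_3^{-1}A$; the uniform $W^{2,p}$ bound on $u^{(k)}$ comes directly from the a~priori estimate of Proposition~\ref{propu} (as you also indicate), not from the bound on $\tilde v^{(k)}$. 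Second, for the transport term the paper first integrates by parts (using $\mathrm{div}\,\tilde v^{(k-1)}=0$) to obtain $\int z^{-1}\tilde v^{(k-1)}u^{k,i}\cdot\nabla\varphi$ and then argues as in Proposition~\ref{solution-rho}, whereas you estimate $\int z^{-1}\tilde v^{(k-1)}\cdot\nabla u^{(k)}\,\varphi$ directly via Sobolev embedding of $\nabla u^{(k)}\in W^{1,p}\hookrightarrow L^\infty$; both are valid and rely on the same ingredients.
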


\begin{proof}
By Lemma \ref{hest} we obtain
\begin{align*}
    \|h\kk(t)\|_{1,p}&\leq L_{11} \intt \ie(s) \| \Tilde{w}\ek(s) \|_{1,p} \,ds\\
    &\leq L_{11} \sup_{t\in[0,\tau]}\{\z^{-2}(t)\}\|\Tilde{w}\ek(s) \|_{C([0,\tau]; W^{1,p}(\mathbb{R}^3))}\\
    &\leq L_{12}\|\Tilde{w}\ek(s) \|_{C([0,\tau]; W^{1,p}(\mathbb{R}^3))}.
\end{align*}
This implies the sequence $\{ u^{(k)}\}_k$ is Cauchy, implying it has a limit $u \in C([0,\tau]; W^{1,p}(\mathbb{R}^3))$. Now, using the estimates described at the beginning of the proof of Lemma \ref{propbouv},  we get
\begin{equation*}
    \|u\kk(t)\|_{2,p}\leq  \text{ exp}\left(c_2\right)\cdot\big[\|v_0\|_{2,p}+  L(\|\nabla\rho_0\|_{1,p}\text{ exp}\left(c_1\right))\big],
\end{equation*}
for  all $t\in [0,\tau]$.
Consequently, with the same argument as in the proof of Proposition \ref{vk-limit}, we can also deduce that $u \in C([0,\tau]; W^{2,p}(\RR^3)) $.

To conclude the proof, we need to show $u$ is a weak solution to Problem \ref{u}. First, recall that
the process $u\kk$ is a weak solution to Problem \ref{iteu}. In particular, this means that if we write this process componentwise as $u^{(k)}=(u^{k,1}, u^{k,2}, u^{k,3})$, the following equation is  satisfied:
\begin{align*}
\int_{\mathbb{R}^3} u^{k,i}(t\wedge \tau,x)\varphi(x) \ dx&=\int_{\mathbb{R}^3} u_0^{k,i}(x) \varphi(x)  dx\\&\quad\quad -\int_0^{t \wedge \tau} \intr\ie(s)\Tilde{v}\kk(s,x) u^{k,i}(s,x)\nabla \varphi(x)\,dx\,ds\\&\quad\quad\quad\quad-\int_0^{t \wedge \tau}\intr \z(s) \ \frac{ \pi_{x_i}\kk(s,x)}{\rho\kk(s,x)}\varphi(x)\,dx\,ds, 
\end{align*}
for $i=1,2,3$  and for all $\varphi\in C^{\infty}_c (\mathbb{R}^3)$. 
We want to show the above equation is precisely satisfied by the corresponding limits of each of the sequences involved. First, with a similar argument to the one described in the proof of  Proposition \ref{propu}, it is not hard to prove that all terms involving the sequence $u^{k,i}$ pass to the corresponding limit. 
Consequently, it only remains to prove the convergence of the last term of the right-hand side of the above equation. We begin showing that
\begin{align*}
&\left|\int_0^{t \wedge \tau}\intr \z(s) \ \frac{ \pi_{x_i}\kk(s,x)}{\rho\kk(s,x)}\varphi(x)\,dx\,ds-\int_0^{t \wedge \tau}\intr \z(s) \ \frac{ \pi_{x_i}(s,x)}{\rho(s,x)}\varphi(x)\,dx\,ds\right|\\
&\leq 
\int_0^{t \wedge \tau}\intr \z(s)  \left|\frac{ \pi_{x_i}\kk(s,x)-\pi_{x_i}(s,x)}{\rho\kk(s,x)}\right||\varphi(x)|\,dx\,ds\\
&\quad\quad+\int_0^{t \wedge \tau}\intr \z(s)\, \left|\frac{\rho\kk(s,x)-\rho(s,x)}{\rho\kk(s,x)\rho(s,x)}\right| |\pi_{x_i}(s,x)| |\varphi(x)|\,dx\,ds\\
&= A_1+A_2.
\end{align*}
A straightforward computation combined with the convergence results we already proved, implies that
\begin{align*}
   A_1 &\leq  m^{-1}\int_0^{t \wedge \tau}\z(s)\, \|\pi_{x_i}\kk(s)-\pi_{x_i}(s)\|_{L^p(\RR^3)}\|\varphi\|_{L^q(\RR^3)}\,ds\\
   &\leq T m^{-1} \,\sup_{s\in[0, \tau]}\{\z(s)\} \, \|\nabla \pi\kk-\nabla \pi\|_{C([0,\tau];W^{1,p}(\RR^3)}\|\varphi\|_{L^q(\RR^3)}\,ds\\
   & \quad\quad\longrightarrow 0\,,\quad \text{ as } \quad k \to \infty,
\end{align*}
and
\begin{align*}
    A_2&\leq m^{-2}\|\rho\kk-\rho\|_{C([0,\tau]\times \RR^3)}\int_0^{t \wedge \tau}\z(s)\,  \|\pi_{x_i}(s) \|_{L^p(\RR^3)} \|\varphi\|_{L^q(\RR^3)}\,ds\\
    &\leq T m^{-2}\, \sup_{s\in[0, \tau]}\{\z(s)\} \|\rho\kk-\rho\|_{C([0,\tau]\times \RR^3)} \|\nabla \pi \|_{C([0,\tau];W^{1,p}(\RR^3)} \|\varphi\|_{L^q(\RR^3)}\,ds\\
    & \quad\quad\longrightarrow 0\,,\quad \text{ as } \quad k \to \infty .
\end{align*}
Then, $u$ is a weak solution of Problem \ref{u} and our claim follows.
\end{proof}
The next result will be important to prove that $u=\tilde{v}$.

\begin{lemma}\label{solution-phi}
The sequence $\{ \nabla\phi^{(k)}\}_k$ converges in $C([0,\tau]; W^{1,p}(\RR^3))$ to a random variable $\nabla \phi \in C([0,\tau]; W^{1,p}(\RR^3))$. Besides that, if $\Tilde{v}$ and $u$ are the random variables from Proposition \ref{vk-limit} and Proposition \ref{solution-u}, respectively, then $\nabla \phi$ satisfies
\begin{align}\label{D2-eq1}
\Tilde{v}=u-\nabla \phi,
\end{align}
with $\phi$ being a weak solution of the elliptic equation
\begin{align}\label{D2-eq2}
\Delta \phi = \text{div} \ u.
\end{align}
\end{lemma}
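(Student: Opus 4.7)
The plan is to derive convergence of $\{\nabla\phi^{(k)}\}_k$ directly from the relations $\nabla\phi^{(k)} = u^{(k)} - \Tilde{v}^{(k)}$ built into the construction, and then identify the limit by a curl-free/gradient argument on $\RR^3$. From \eqref{defv} one has $\nabla\phi^{(k)} = u^{(k)} - \Tilde{v}^{(k)}$, and by Propositions \ref{vk-limit} and \ref{solution-u} both $u^{(k)} \to u$ and $\Tilde{v}^{(k)} \to \Tilde{v}$ in $C([0,\tau]; W^{1,p}(\RR^3))$. Hence $\{\nabla\phi^{(k)}\}_k$ is Cauchy in that space and converges to the field $g := u - \Tilde{v}$.

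Next I would identify $g$ as a gradient. Each $\nabla\phi^{(k)}$ is curl-free by construction, and the curl operator is continuous from $W^{1,p}(\RR^3)$ to $L^p(\RR^3)$; therefore $\mathrm{curl}\,g = \lim_k \mathrm{curl}(\nabla\phi^{(k)}) = 0$ in $L^p(\RR^3)$. Since $\RR^3$ is simply connected, the Poincar\'e lemma provides a function $\phi$ (unique up to an additive constant) such that $\nabla\phi = g = u - \Tilde{v}$; from $g \in W^{1,p}(\RR^3)$ one infers $\phi \in W^{2,p}_{\mathrm{loc}}(\RR^3)$ with $\nabla\phi \in W^{1,p}(\RR^3)$, which gives \eqref{D2-eq1}.

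To check \eqref{D2-eq2}, observe that since $\Delta\phi^{(k)} = \text{div}\,u^{(k)}$, each $\Tilde{v}^{(k)} = u^{(k)} - \nabla\phi^{(k)}$ is divergence-free. Continuity of the divergence from $W^{1,p}$ to $L^p$ passes this property to the limit, so $\text{div}\,\Tilde{v} = 0$, and consequently
\[
\Delta \phi = \text{div}(\nabla\phi) = \text{div}(u - \Tilde{v}) = \text{div}\,u
\]
in the sense of distributions, which is exactly the weak formulation of \eqref{D2-eq2}.

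The main (and only modest) obstacle is the second step: promoting the curl-free field $g$ to a global gradient $\nabla\phi$ via the Poincar\'e lemma on $\RR^3$, together with checking the $W^{2,p}_{\mathrm{loc}}$ regularity of the potential. A direct alternative that sidesteps this issue is to define $\nabla\phi := \nabla\Delta^{-1}(\text{div}\,u)$ via second-order Riesz transforms, exploit the $W^{1,p}$-boundedness of these Calder\'on--Zygmund operators to obtain $\nabla\phi^{(k)} = \nabla\Delta^{-1}(\text{div}\,u^{(k)}) \to \nabla\phi$ from the convergence of $u^{(k)}$, and then identify $u - \Tilde{v}$ with $\nabla\phi$ by uniqueness of the Helmholtz decomposition of $u$.
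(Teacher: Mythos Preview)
Your argument is correct, but it takes a different route from the paper's. The paper works directly with the Poisson equation \eqref{eqphi}: applying the elliptic estimate to $\Delta(\phi^{(k)}-\phi^{(k-1)})=\text{div}\,h^{(k)}$ gives $\|\nabla(\phi^{(k)}-\phi^{(k-1)})\|_{1,p}\le C\|h^{(k)}\|_{1,p}$, so the Cauchy property of $\{\nabla\phi^{(k)}\}_k$ is read off from that of $\{u^{(k)}\}_k$ alone; then \eqref{D2-eq1} and \eqref{D2-eq2} are obtained by passing to the limit in the weak formulations of \eqref{defv} and \eqref{eqphi}. You instead exploit the algebraic identity $\nabla\phi^{(k)}=u^{(k)}-\Tilde{v}^{(k)}$ together with the already-established convergence of \emph{both} sequences, which makes the Cauchy step a one-liner but forces you to argue separately (via the curl and the Poincar\'e lemma, or via Riesz transforms) that the limit $u-\Tilde{v}$ is a genuine gradient. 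The paper's version keeps the proof self-contained with respect to the elliptic machinery already in use (and needs only the convergence of $u^{(k)}$), while yours is more elementary at the convergence step and makes the Helmholtz-decomposition structure explicit; the Riesz-transform alternative you sketch at the end is in fact closest in spirit to what the paper does.
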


\begin{proof}
First, by \eqref{eqphi}, for $t\in[0,\tau]$, we have
\begin{align*}
\|\nabla \phi^{(k)}(t)-\nabla \phi^{(k-1)}(t)\|_{1,p}=\|\nabla ( \phi^{(k)}(t)- \phi^{(k-1)}(t))\|_{1,p}\leq C\|h^{(k)}(t)\|_{1,p}
\end{align*}
Following the same procedure as in the proof of Proposition \ref{vk-limit}, it is easy to prove that $\{ \nabla\phi^{(k)}\}_k$ is a Cauchy sequence in $C([0,\tau]; W^{1,p}(\RR^3))$. So, there exists a process $\nabla \phi \in C([0,\tau]; W^{1,p}(\RR^3))$ such that
\begin{align}\label{D1-eq1}
\|\nabla \phi^{(k)}-\nabla \phi\|_{C([0,\tau]; W^{1,p}(\RR^3))}\longrightarrow 0 \quad \text{ as } \quad k\to \infty\,.
\end{align}
Passing to the limit as $k\to\infty$ in \eqref{defv}, within the space $C([0,\tau]; W^{1,p}(\RR^3))$, we establish \eqref{D2-eq1}. Now, since $\phi^{(k)}$ is a weak solution of \eqref{eqphi}, then, for all $\varphi\in C^{\infty}_c (\mathbb{R}^3)$, it satisfies 
\begin{align*}
\int_{\mathbb{R}^3} \nabla \phi^{(k)}(x)\cdot\nabla \varphi(x) \ dx = \int_{\mathbb{R}^3} u^{(k)}(x)\cdot\nabla \varphi(x) \ dx
\end{align*}
In order to show that $\phi$ is a weak solution of \eqref{D2-eq2} it is enough to pass to the limit in the above equation. Thereby, by considering the convergence \eqref{D1-eq1} we find
\begin{align*}
& \left|\int_{\mathbb{R}^3}  \nabla \phi^{(k)}(x)\cdot\nabla \varphi(x) \ dx-\int_{\mathbb{R}^3} \nabla \phi(x)\cdot\nabla \varphi(x) \ dx\right|\\
& \quad\quad\quad\|\nabla \phi^{(k)}-\nabla \phi\|_{C([0,\tau]; W^{1,p}(\RR^3))} \|\nabla\varphi\|_{L^q(\RR^3)} \longrightarrow 0\,, \quad \text{ as } \quad k\to 0\,,
\end{align*}
and
\begin{align*}
& \left|\int_{\mathbb{R}^3}  u^{(k)}(x)\cdot\nabla \varphi(x) \ dx-\int_{\mathbb{R}^3} u(x)\cdot\nabla \varphi(x) \ dx\right|\\
& \quad\quad\quad\leq \|u^{(k)}-u\|_{C([0,\tau]; W^{1,p}(\RR^3))} \|\nabla \varphi\|_{L^q(\RR^3)}  \longrightarrow 0\,, \quad \text{ as } \quad k\to 0\,,
\end{align*}
where $1/p+1/q=1$. Hence, the proof is complete.

\end{proof}


Now we are ready to complete the proof of our existence result.

\begin{proposition}[Existence of solutions] \label{existence}
 If $v=\ie \Tilde{v}$, then  ($\rho$, $\nabla \pi$, $v$, $\tau$) determines a local pathwise solution for Problem \ref{eq0}.
\end{proposition}

\begin{proof}
   First, notice that $u=\Tilde{v}$. Indeed, if we apply the divergence operator to each side of Eq. \eqref{u} and use Eq. \eqref{div}, we obtain 
\begin{equation*}
    (\text{div} \ u)_t+ \ie\Tilde{v}\cdot\nabla(\text{div} \ u )=-\ie\sum_{i,j=1}^3\Tilde{v}^i_\xxj\phi_\xij.
\end{equation*}
Applying to this last equation the same estimate as at the beginning of the proof of Proposition \ref{propu} we have 
\begin{equation*}
    \|\text{div} \ u\|_{L^p(\RR^3)} \leq C_2(\tau) \intt  \| \phi\|_{2,p}\,ds.
\end{equation*}
On the other hand, for the elliptic equation \eqref{D2-eq2} the following inequality holds:
\begin{equation*}
    \| \phi \|_{2,p}\leq k \|\text{div} \ u\|_{L^p(\RR^3)} .
\end{equation*}
Then, by Gr\"onwall's inequality and using that $\text{div } u_0=0$, it is easy to see that $\text{div } u=0$. Returning to Eq. \eqref{D2-eq1}, we deduce $\phi=0$.\\

By the Proposition \ref{solution-u}, we have
\begin{equation*}
 \Tilde{v}_t + \ie \Tilde{v}\cdot \nabla \Tilde{v}+ \z\frac{\nabla \pi}{\rho}=0.  
\end{equation*}

Now, by multiplying this equation with the characteristic function of the set $[0, \tau]$ and implementing the variable change $v=\ie \Tilde{v}$ using Stratonovich calculus, we can readily deduce that $v$ satisfies:
 
\[d v + (v\cdot \nabla)v \,dt+ \frac{\nabla \pi}{\rho}\,dt = -v\circ\,d \W_t .
\]
Then our proof is completed by Proposition \ref{solution-rho}.
\end{proof}


\subsection{Uniqueness of solutions}

 The proof of the uniqueness of local solutions follows some of the ideas and estimates we have already used in the preceding subsection.

\begin{proposition}\label{uniqueness}
    The solution to the problem \eqref{eq0} is pathwise unique.
\end{proposition}
\begin{proof}
Let $(\rho^1, \nabla \pi^1, v^1, \tau_1)$ and $(\rho^2, \nabla \pi^2, v^2, \tau_2)$
 be two solutions of the problem with initial value $(\rho_0, v_0)$. We define   $\tau= \tau_1\wedge  \tau_2$, and the variables 
\begin{equation*}
    \Tilde{v}^1(t)=\ie(t) v^1(t)\hspace{0.5cm}\mbox{and} \hspace{0.5cm} \Tilde{v}(t)^2=\ie(t) v^2(t), \hspace{1cm}\text{$t\in [0,\tau ]$.}
\end{equation*} Then, the solutions  satisfy respectively
 \begin{equation*}
    \begin{cases}
        \rho^1_t +  \ie \Tilde{v}^1 \cdot \nabla \rho^1 =0,\\
        \Tilde{v}^1_t+\ie(\Tilde{v}^1\cdot \nabla) \Tilde{v}^1+\frac{1}{\rho^1}\z\, \nabla \pi^1=0,\,\\
       \text{div } \Tilde{v}^1=0,
    \end{cases}
\end{equation*}
and
\begin{equation*}
    \begin{cases}
        \rho^2_t +  \ie\Tilde{v}^2 \cdot \nabla \rho^2 =0,\\
        \Tilde{v}^2_t+\ie(\Tilde{v}^2\cdot \nabla) \Tilde{v}^2+\frac{1}{\rho^2}\z\, \nabla \pi^2=0,\,\\
        \text{div } \Tilde{v}^2=0.
    \end{cases}
\end{equation*}
Subtracting the above equations and considering the differences $\sigma=\rho^2-\rho^1$,  $q=\pi^2-\pi^1 $, $w= v^2- v^1$ and $\Tilde{w}= \Tilde{v}^2- \Tilde{v}^1$, with $\sigma(0)=w(0)=\Tilde{w}(0)=0$.
We obtain

\begin{equation}\label{diffsolutions}
    \begin{cases}
   \sigma_t + \ie \Tilde{v}^2\cdot \nabla \sigma =-\ie \nabla \rho^1 \cdot \Tilde{w},\\   \Tilde{w}_t + \ie \Tilde{v
 }^2\cdot \nabla \Tilde{w}+ \ie \Tilde{w}\nabla \Tilde{v}^1 + \z\frac{\nabla q}{\rho^2} -\z\frac{\sigma}{\rho^2\rho^1}\nabla \pi^1 =0,\\
 \text{div }\Tilde{w}=0.
    \end{cases}
\end{equation}
Now, repeating the same to the second equation of \eqref{diffsolutions} estimates as in the Lemma \ref{hest} taking $h\kk(t)=\Tilde{w}(t)$, we obtain
 \begin{equation*}
     \dt \|\Tilde{w}(t)\|_{1,p}\leq L_{11}\left(\|\Tilde{w}(t) \|_{1,p}+\|\sigma(t) \|_{1,p}+\| \nabla q(t)\|_{1,p} \right),
 \end{equation*}
Applying the divergence operator in the second equation of \eqref{diffsolutions}, we obtain an equation for $q$ as the one in \eqref{defq}. Then, by Lemma \ref{sigmaest} and Lemma \ref{qest} the norms $\|\sigma(t)\|_{1,p}$ and $\|\nabla q(t)\|_{1,p}$ can be bounded by $\|\Tilde{w}(t)\|_{1,p}$, implying
 \begin{equation*}
    \dt \|\Tilde{w}(t)\|_{1,p}\leq L_{12} \|\Tilde{w}(t)\|_{1,p}.
 \end{equation*}
By Gronwall's inequality for each $\omega \in \Omega$  we have $\Tilde{v}^1(t)=\Tilde{v}^2(t)$ with $t\in [0, \tau(\omega)]$.  Finally,  combined with the above estimates let us deduce that
$(\rho^1, \nabla \pi^1, v^1)=(\rho^2, \nabla \pi^2, v^2)$ on $[0,\tau]$, as claimed.
\end{proof}

\medskip

Combining Proposition \ref{existence} and Proposition \ref{uniqueness}, the proof of Theorem \ref{maintheorem} is now complete. 

\subsection{Maximal pathwise solutions}\label{section-maximal}
After proving the well-posedness of local pathwise solutions, it is natural to ask about the existence of a unique maximal pathwise solution. This matter has been discussed in several works using different proof scheme (see, for instance \cite{Bre4}, \cite{Goodair-Crisan-Lang}, \cite{Vicol}). In this article, we will mostly follow the ideas presented in \cite{Goodair-Crisan-Lang}. Broadly speaking, the strategy of the proof relies on a suitable application of Zorn's lemma, for which we will define the following sets.

Let $\mathscr{T}$ be the set of all a.s. strictly positive stopping times corresponding to a local pathwise solution with initial data $(\rho_0, v_0)$. 
Notice that $\mathscr{T}$ is nonempty by the Proposition \ref{existence} and satisfies the following properties:
\begin{itemize}
    \item for $\sigma_1$, $\sigma_2 \in \mathscr{T}$ then $\sigma_1 \vee \sigma_2\in \mathscr{T}$,
    \item for $\sigma_1$, $\sigma_2 \in \mathscr{T}$ then $\sigma_1 \wedge\sigma_2\in \mathscr{T}$.
\end{itemize}
For a detailed proof of the first property, see \cite[Lemma 3.2]{Goodair-Crisan} and \cite[Lemma 3.31]{Goodair-Crisan-Lang}, while the second property is straightforward. 
We also define $\mathscr{F}$ as the set of all the strictly positive stopping times given by a.s. the limit of increasing elements of strictly positive $\mathscr{T}$. This set is nonempty and partially ordered by
\begin{equation*}
  \sigma_1 \leq \sigma_2 \quad \text{if and only if} \quad 
\sigma_1(\omega) \leq \sigma_2(\omega) \ \text{for almost every } \omega .  
\end{equation*}
\begin{proposition}
    There exists a unique maximal pathwise solution to Problem~\eqref{eq0}, in the sense of Definitions~\ref{defmaximalsolution} and~\ref{maxuniqueness}, with initial data $(\rho_0, v_0)$.
\end{proposition}
\begin{proof}
Let $(\Lambda_k)_{k\in\mathbb{N}}\subset\mathscr{F}$ be an increasing sequence, i.e.
\begin{equation*}
\Lambda_1\leq\Lambda_2\leq\cdots\leq\Lambda_n\leq\Lambda_{n+1}\leq\cdots.
\end{equation*}
We are going to show this chain admits an upper bound.
By hypothesis, for each $k\in\mathbb{N}$ there exists an increasing sequence
$(\lambda^k_i)_{i\in\mathbb{N}}\subset\mathscr{T}$ such that $\lambda^k_i\uparrow\Lambda_k$
as $i\to\infty$. For every $n\in\mathbb{N}$ define
\begin{equation*}
 \beta_n:=\bigvee_{k=1}^n \lambda^k_n.   
\end{equation*}
By the first property stated above, namely that $\mathscr{T}$ is closed under finite suprema,
we have $\beta_n\in\mathscr{T}$ for every $n$.
We claim that the sequence $(\beta_n)$ is increasing a.s. Indeed, for $n<m$ and
$1\leq k\leq n$ we have $\lambda^k_n\leq\lambda^k_m$ because $(\lambda^k_i)_i$ is increasing,
hence
\begin{equation*}
 \beta_n=\bigvee_{k=1}^n\lambda^k_n \leq \bigvee_{k=1}^n\lambda^k_m \leq \bigvee_{k=1}^m\lambda^k_m=\beta_m,   
\end{equation*}
so $\beta_n\leq\beta_m$ a.s. Therefore the monotone limit
\begin{equation*}
\Lambda:=\lim_{n\to\infty}\beta_n=\sup_{n\in\mathbb{N}}\beta_n  
\end{equation*}
exists a.s. and, being the increasing limit of stopping times, is itself a stopping time.
By the definition of $\mathscr{F}$, we deduce $\Lambda\in\mathscr{F}$.
Next, observe that for fixed $k$ and all $n\geq k$ we have $\lambda^k_n \leq \beta_n$.
Passing to the limit $n\to\infty$ (using $\lambda^k_n\uparrow\Lambda_k$ and $\beta_n\uparrow\Lambda$)
yields $\Lambda_k \leq \Lambda$. Hence $\Lambda$ is an upper bound of the chain
$\{\Lambda_k:k\in\mathbb{N}\}$ in $\mathscr{F}$.
Since the choice of the increasing sequence $(\Lambda_k)$ was arbitrary, every totally ordered
subset of $\mathscr{F}$ admits an upper bound in $\mathscr{F}$. By Zorn's lemma it follows that 
$\mathscr{F}$ contains at least one maximal element. We denote one such element by $\xi$.
We now construct a solution on $[0,\xi)$. By definition, there exists an increasing sequence $(\sigma_n)_{n\in\mathbb{N}}\subset\mathscr{T}$ converging to $\xi$, such that for each $n$ let
$(\rho_n,\nabla\pi_n,v_n)$ denote the solution corresponding to the stopping time $\sigma_n$.
Because of the uniqueness established in the previous section, these local solutions are
compatible on overlaps: if $m>n$ then the restrictions of $(\rho_m,\nabla\pi_m,v_m)$ and
$(\rho_n,\nabla\pi_n,v_n)$ to $[0,\sigma_n]$ coincide. Therefore the expression
\begin{equation}\label{S4-eq-2}
(\rho,\nabla\pi,v):=(\rho_n,\nabla\pi_n,v_n)\quad\text{on }[0,\sigma_n]
\end{equation}
is well defined and yields a solution on $[0,\sup_n\sigma_n)=[0,\xi)$.
Finally, if $(\rho',\nabla\pi',v', \xi')$ is another maximal solution, then there exists an increasing sequence $(\beta_n)_{n\in\N} \subset \mathscr{T}$ converging a.s. to $\xi'\in \mathscr{F}$, such that each quadruple $(\rho',\nabla\pi',v', \beta_n)$, $n\in \N$, is a local pathwise solution. By the first property of $\mathscr{T}$, the sequence $(\sigma_n \vee \beta_n)_{n\in\N}$ also belongs to $\mathscr{T}$ and converges a.s. to $\xi \vee \xi' \in \mathscr{F}$. Since $\xi, \xi' \leq \xi \vee \xi'$, by the property (ii) in Definition \ref{defmaximalsolution} we conclude that $\xi = \xi \vee \xi' = \xi'$. Moreover, by Proposition~\ref{uniqueness}, which ensures the uniqueness of local pathwise solutions, and by monotone convergence theorem we obtain
\begin{align*}
&\mathbb{P}\Big( (\rho',\nabla\pi', v')(t)
=(\rho,\nabla \pi, v)(t), \ \forall  t\in [0, \xi) \Big)\\&=\mathbb{P}\Big( \bigcup_{n\in \N}\left\{(\rho',\nabla\pi', v')(t)
=(\rho,\nabla \pi, v)(t), \ \forall  t\in [0, \sigma_n\wedge \beta_n]\right\} \Big)\\&=\lim_{n\rightarrow \infty}\mathbb{P}\Big( \bigcup_{k=1}^n \left\{(\rho',\nabla\pi', v')(t)
=(\rho,\nabla \pi, v)(t), \ \forall t\in [0, \sigma_k\wedge \beta_k] \right\}\Big)\\&=\lim_{n\rightarrow \infty}\mathbb{P}\Big( (\rho',\nabla\pi', v')(t)
=(\rho,\nabla \pi, v)(t), \ \forall t\in [0, \sigma_n\wedge \beta_n] \Big)=1.
\end{align*}
\end{proof}

\section{Proof of the theorem \ref{maintheorembi} }

\subsection{Successive approximations}
Let $v^{(0)}=0$, and for $k\in \NN$, let  $\rho^{(k)}$, $\nabla \pi^{(k)}$ and $u^{(k)}$ be the corresponding solutions of the following problems

\begin{equation}\label{iterhobi}
\begin{cases} 
\rho^{(k)}_t +  v^{(k-1)} \cdot \nabla \rho^{(k)} =0, &\\ 
\rho^{(k)}|_{t=0}=\rho_0(x),&
\end{cases}
\end{equation}
\begin{equation}\label{itepbi}
\begin{cases} 
\text{div}\left(\frac{1}{\rho^{(k)}}\nabla \pi^{(k)}\right)=-\sum_{i, j=1}^3  v^{(k-1),i}_{x_j} v^{(k-1),j}_{x_i}, &\\ 
\end{cases}
\end{equation}
\begin{equation}\label{iteubi}
\begin{cases}  
 u_t^{(k)} +  v^{(k-1)}\cdot \nabla u^{(k)}+
   v^{(k-1)}\cdot \nabla \W^{Q}
+  \frac{\nabla \pi^{(k)}}{\rho^{(k)}}=0,\\
u^{(k)}|_{t=0} =v_0(x), &
\end{cases}
\end{equation}
Finally, we define 
\[
\tilde{u}^{(k)}= u^{(k)} + \W^{Q}
\]
\begin{equation}\label{defvbi}
    v^{(k)}= \tilde{u}^{(k)}-\nabla \phi^{(k)}
\end{equation}
where $\phi^{(k)}$ is the solution of 
\begin{equation}\label{eqphibi}
\Delta \phi^{(k)}=\text{div } \tilde{u}^{(k)}.
\end{equation}
The following result is crucial to prove that our successive approximations converge to the desired solution.

\begin{proposition}\label{propbouvbi}
There exists a stopping time $\tau$, such that the sequence $\left\{v^{(k)}\right\}_k$ is bounded in $C([0,\tau];W^{2,p}(\RR^3))$. In particular,
 \begin{equation*}
     \sup_{0\leq t \leq \tau}\|v^{(k)}(t)\|_{2,p}\leq A,
\end{equation*} 
for some constant $A>1$.
\end{proposition}
\begin{proof}
Recalling the estimates of Proposition \ref{propubi} we get
\[
\|u(t)\|_{2,p}\leq \text{exp}\left( c_2\intt \|v
(s)\|_{2,p}  \,ds\right) \times
 \]    
    \[
    \left[\|v_0\|_{2,p}+ \intt  L(\|\nabla \rho(s)\|_{1,p})\| v(s)\|^2_{2,p}  + \| v(s)\|_{2,p}\|\W^{Q}(s)\|_{k,2} \,ds\right].
    \]
Then,
\begin{align*}
    \|u\kk(t)\|_{2,p}&\leq \text{exp}\left( c_2\intt \|v\ek(s)\|_{2,p}      \,ds\right)\\&\hspace{0.5cm}\cdot\left[\|v_0\|_{2,p}+ \intt  L(\|\nabla \rho\kk(s)\|_{1,p})\| v\ek(s)\|^2_{2,p} + \| v\ek(s)\|_{2,p}\|\W^{Q(s)}\|_{k,2}\,ds\right].
\end{align*}
By Eq.\eqref{defvbi} and Eq.\eqref{eqphibi} we have
\begin{equation*}
\|v^{(k)}\|_{2,p}\leq \|\tilde{u}^{(k)}\|_{2,p} + \|\nabla \phi^{(k)}\|_{2,p}\leq c_3 \|\tilde{u}^{(k)}\|_{2,p}
\leq c_3 (\|u^{(k)}\|_{2,p} +   \|\W^{Q}(s)\|_{k,2}),
\end{equation*}
which combined with the above estimate implies
\begin{align*}
    &\quad\quad\quad\quad\quad\quad\|v^{(k)}(t)\|_{2,p} \leq c_3 \text{ exp}\left( c_2\intt  \|v\ek(s)\|_{2,p} \,ds\right)\cdot \\
    &\cdot\left[ \|v_0\|_{2,p}+ \intt \left(L(\|\nabla\rho\kk(s)\|_{1,p})\|v\ek(s)\|^2_{1,p} +\| v\ek(s)\|_{2,p} \|\W^{Q}(s)\|_{k,2} \right)\,ds\right]+ c_{3} \|\W^{Q}(t)\|_{k,2}
\end{align*}
Now, we consider $A>1 $ with 
\begin{align*}
    A /3 \geq c_3 \text{exp}(1)\left[\|v_0\|_{2,p}+L\left(\|\nabla \rho_0\|_{1,p}\text{exp}(1)\right) \right].
\end{align*}
We define 
 \begin{equation*}
        \tau=\inf\{t\geq 0 :
        (c_1\vee c_{2}   \vee 1)A^{2} t + 
      c_{3} \text{exp(1)}  \int_{0}^{t} \|\W^{Q}(s)\|_{k,2} ds + c_3 A^{-1}\|\W^{Q}(t)\|_{k,2}\geq 1/3 \}  \wedge T,
  \end{equation*}
    \ is a stopping time following \cite[Proposition 3.9]{L}.
 Now, we can prove our claim by induction on $k$. The case $k=0$ trivially holds.
 If  we suppose
\begin{equation*}
     \sup_{0\leq t \leq \tau}\|v^{(k-1)}\|_{2,p}\leq A,
\end{equation*}
then using  Proposition \ref{proprho2} , we obtain the following estimate for every $t\leq \tau$:
\begin{align*}
    \|\nabla\rho\kk(t)\|_{1,p}& \leq \|\nabla\rho_0\|_{1,p}\text{ exp}\left(c_1 \intt \|v\ek(s)\|_{2,p} \,ds\right)\\
    &\leq  \|\nabla\rho_0\|_{1,p}\text{ exp}\left(c_1 A t  \right)\\
    &\leq  \|\nabla\rho_0\|_{1,p}\text{ exp}\left(1\right),
\end{align*}
 where the last inequality follows from the definition of the stopping time $\tau$ and the assumption $A\leq A^2$. 
Finally, using the above estimates for $\|v^{(k-1)}\|_{2,p}$ and $\|\nabla\rho\kk\|_{1,p}$, and the fact $L$ are increasing functions, we can deduce 
\begin{align*}
\|v^{(k)}(t)\|_{2,p}&\leq c_3 \text{ exp}\left( c_2 t A  \right)\\
\notag &\hspace{1cm}\cdot\big[\|v_0\|_{2,p}+ L(\|\nabla\rho_0\|_{1,p}\text{ exp}\left(1\right)) t A^2   + \int_{0}^t   \|\W^{Q}(s)\|_{k,2}^{2} \,ds\big]+ c_{3} \|\W^{Q}(t)\|_{k,2}\\
&\leq c_3 \text{ exp}\left(1\right)\cdot\big[\|v_0\|_{2,p}
+L(\|\nabla\rho_0\|_{1,p}\text{exp}\left(1\right))
+ A \int_{0}^t   \|\W^{Q}(s)\|_{k,2}^{2} ds 
\big]+ c_{3} \|\W^{Q}(t)\|_{k,2} \\
& \leq A,
\end{align*}
completing our proof.
\end{proof}
As a direct consequence of the above proposition and its corresponding proof, we deduce the following bounds for $\{\rho^{(k)}\}_k$, and $\{\pi^{(k)}\}_k$.

\begin{corollary}\label{coroestbi}  The next estimates hold
\begin{equation}\label{esrho1bi}
    \sup_{0\leq t\leq \tau}\|\nabla\rho^{(k)}(t)\|_{1,p}\leq \text{exp}(1)\|\nabla\rho_0\|_{1,p}=L_1,
\end{equation}

\begin{equation}\label{esp1bi}
    \sup_{0\leq t\leq \tau}\|\nabla \pi^{(k)}(t)\|_{2,p}\leq  K(L_1)A^2=L_2. 
\end{equation}
\end{corollary}

\subsection{Convergence results}

To show the convergence of the successive approximation constructed in the previous section, we consider the sequences of differences given by: $\sigma\kk=\rho\kk-\rho\ek$, $h\kk=u\kk-u\ek$, $q\kk=\pi\kk-\pi\ek $, $w\kk= v\kk- v\ek$. 
\begin{remark}
Subtracting the equations in each of the problems \eqref{iterhobi}, \eqref{itepbi} and \eqref{iteubi} corresponding to the steps $(k)$ and $(k-1)$, we obtain 
\begin{equation}\label{defsigmabi}
\begin{cases} 
\sigma^{(k)}_t + v^{(k-1)} \cdot \nabla \sigma^{(k)} =-\nabla \rho\ek \cdot w\ek, &\\ 
\sigma^{(k)}|_{t=0}=0,&
\end{cases}
\end{equation}
\begin{align}\label{defqbi}
\begin{cases}
\text{div}\left(\frac{1}{\rho^{(k)}}\nabla q^{(k)}\right)&=\text{div}\left( \frac{\sigma\kk}{\rho\kk \rho\ek}\nabla \pi\ek \right)-\sum_{i, j=1}^3   w^{(k-1),i}_{x_j} v^{(k-1),j}_{x_i}\\
&- \sum_{i, j=1}^3  v_\xxj^{(k-2),i}w_\xxi^{(k-1),j},  
\end{cases}
\end{align}
\begin{align}\label{defhbi}
\begin{cases} 
 h_t^{(k)} + v^{(k-1)}\cdot \nabla h^{(k)} + \frac{\nabla q^{(k)}}{\rho^{(k)}}=\frac{\sigma\kk}{\rho\ek \rho\kk}\nabla \pi\ek - w\ek\nabla u\ek \\
 -   w^{(k-1)}\nabla \W^{Q},  \\ 
h^{(k)}|_{t=0} =0.
\end{cases}
\end{align}
\end{remark}
In the following three technical lemmas we deduce some estimates of the new variables $\sigma\kk$, $q\kk$ and $h\kk$, bounded in terms of  $w\kk$. We omit the proofs  that follows the similar arguments of the propositions \ref{sigmaest}, \ref{qest} and \ref{hest}.
\begin{lemma}\label{sigmaestbi}
    Let $\sigma\kk$ be the solution of the system \eqref{defsigmabi}, then 
    \begin{align}
    \|\sigma\kk(t)\|_{1,p}\leq 
    L_5 \intt  \| w\ek(s)\|_{1,p}\,ds,
    \end{align}  
for all $t\in [0, \tau]$.
\end{lemma}
\begin{lemma}\label{qestbi}
Let $q\kk$ be the solution of the system \eqref{defqbi}, then     
\begin{equation*}
    \|\nabla q\kk(t)\|_{1,p}\leq L_6\left(   \|\sigma\kk(t)\|_{1,p}+\|w\ek(t)\|_{1,p}\right),
\end{equation*}
for all $t\in [0, \tau]$.
\end{lemma}
\begin{lemma}\label{hestbi}
Let $h\kk$ be the solution of the system \eqref{defhbi}, then 
\begin{equation*}
    \|h\kk(t) \|_{1,p}\leq L_9 \intt \left( \| \nabla q\kk(s)\|_{1,p}+  \|w\ek(s) \|_{1,p}+   \|w\ek(s) \|_{1,p}  \| \W^{Q}\|_{k,2} + \,\|\sigma\kk (s)\|_{1,p}\right)\, ds,
\end{equation*}
for all $t\in [0, \tau]$.
\end{lemma}
At this point, we can finally prove the convergence of the sequence $\{v^{(k)}\}_k$, which will be the key to proving the convergence of the other sequence of approximations. 
\begin{proposition}\label{vk-limitbi}
    The sequence $\{v^{(k)}\}_k$ converge in $C([0,\tau];W^{1,p}(\RR^3))$ to a random variable $v\in C([0, \tau]; W^{2,p}(\RR^3))$.
\end{proposition}
\begin{proof}
 We have deduce from \eqref{eqphibi}
\begin{align*}
    \| w\kk \|_{1,p}&\leq \| h\kk \|_{1,p}+ \| \nabla (\phi\kk-\phi\ek) \|_{1,p}\leq c_9 \| h\kk \|_{1,p},
\end{align*}
using the previous lemmas 
\begin{align*}
    \|w\kk(t)\|_{1,p}\leq L_{10}(\| \W^{Q}\|_{k,2}) \intt  \ \| w\ek(s) \|_{1,p} \,ds,
\end{align*}
from which it follows that
\begin{align*}
    \|w\kk(t)\|_{1,p}&\leq L_{10}^{k-1} \frac{t^{k-1}}{(k-1)!} \sup_{0\leq s\leq t}\| w^1(s) \|_{1,p},,
\end{align*}
then 
\begin{align*}
    \sup_{0\leq t\leq \tau}\|w\kk(t)\|_{1,p}
    &\leq A\,L_{10}^{k-1}  \frac{\tau^{k-1}}{(k-1)!} ,
\end{align*}
therefore
\begin{align}\label{wk-seriesbi}
\sum_{k=1}^\infty\|w\kk\|_{C([0,\tau];W^{1,p}(\mathbb{R}^3))}<\infty.
\end{align}  
This implies that the sequence $\{v^{(k)}\}_k$ is Cauchy in $C([0,\tau]; W^{1,p}(\mathbb{R}^3))$, and by Proposition~\ref{propbouvbi}, it is also bounded in $C([0,\tau]; W^{2,p}(\mathbb{R}^3))$.
Using the same argument as in Proposition~\ref{vk-limit}, we can deduce that $\{v^{(k)}\}_k$ converges in $C([0,\tau]; W^{1,p}(\mathbb{R}^3))$ to an element $v \in C([0,\tau]; W^{2,p}(\mathbb{R}^3))$
\end{proof}

Now, we present the convergence results for the remaining sequences of approximations and prove that the corresponding limits are the desired solutions to our problems. We omit the proofs, we continue with the same arguments as in the Lemmas and Propositions \ref{limit-rhok}, \ref{solution-rho},  \ref{limit-pik}, \ref{solution-u} and \ref{solution-phi}.
\begin{lemma}\label{limit-rhokbi}
    The sequence $\{\rho^{(k)}\}_k$ converges in $C([0,\tau]\times \mathbb{R}^3)$ to a random variable $\rho\in C([0, \tau]\times\mathbb{R}^3)$ such that $\nabla \rho \in C([0, \tau], W^{1,p}(\mathbb{R}^3))$. Moreover, 
$\rho$ is  a weak solution of the equation
    \begin{align}
    \left\{
    \begin{aligned}
       & \rho_t +  v\cdot \nabla \rho =0\\
       & \rho|_{t=0}=\rho_0(x)
       \end{aligned}
       \right.\,,
 \end{align}
 where   $v$ is the random variable from the Proposition \ref{vk-limitbi}.
\end{lemma}
We continue now dealing with the convergence of the sequences $\{\nabla \pi^{(k)}\}_k$ and $\{ u^{(k)}\}_k$.
\begin{proposition}
The sequence $\{\nabla \pi^{(k)}\}_k$ converges in $C([0,\tau]; W^{1,p}(\RR^3))$ to a random variable $\nabla \pi \in C([0,\tau]; W^{2,p}(\RR^3))$.  Moreover, $\nabla \pi$ is a weak solution to the problem
 \begin{align}\label{divbi}
     \text{div}( \rho^{-1}(t)   \nabla \pi(t))=-\text{div}(v(t)\cdot \nabla v(t)),
 \end{align}
 for every $t\in [0,\tau]$.
\end{proposition}
\begin{proposition}\label{solution-ubi}
 The sequence $\{ u^{(k)}\}_k$ converges in $C([0,\tau]; W^{1,p}(\RR^3))$ to a random variable $u \in C([0,\tau]; W^{2,p}(\RR^3))$. Moreover, $u$ is a weak solution to the problem
\begin{equation}\label{ubi}
\begin{cases}  
 u_t + (v\cdot \nabla) u  
 + (v\cdot \nabla) \W^{Q}+\frac{\nabla \pi}{\rho}=0 \\
u|_{t=0} =v_0(x). &
\end{cases}
\end{equation}   
\end{proposition}

The following Lemma allow us to define the vector field candidate to be the solution of the second equation of \eqref{eq0bi}.
\begin{lemma}\label{solution-phi-RA}
The sequence $\{ \nabla\phi^{(k)}\}_k$ converges in $C([0,\tau]; W^{1,p}(\RR^3))$ to a random variable $\nabla \phi \in C([0,\tau]; W^{1,p}(\RR^3))$. Besides that, if $v$ and $u$ are the random variables from Proposition \ref{vk-limitbi} and Proposition \ref{solution-ubi}, respectively, then $\nabla \phi$ satisfies
\begin{align}\label{D2-eq1-RA}
& \tilde{u}= u + \W^{Q} \quad \text{and} \quad v=\tilde{u}-\nabla \phi,
\end{align}
with $\phi$ being a weak solution of the elliptic equation
\begin{align*}
\Delta \phi = \text{div} \ \tilde{u}.
\end{align*}
\end{lemma}

\subsection{Existence and uniqueness }

In this section, we state the existence and uniqueness result for Problem \eqref{eq0bi}. We only outline the main ideas of the proofs, since the arguments are similar to those used in Propositions \ref{existence} and \ref{uniqueness}.

\begin{proposition}[Existence of solutions]\label{existencebi}
  If $\Tilde{u}=u+ \W^{Q} $, then  ($\rho$, $\nabla \pi$, $\tilde{u}$, $\tau$) determines a local pathwise solution for Problem \eqref{eq0bi} in the sense of Definition \ref{deflocalsolutionbi}.  
\end{proposition} 
\begin{proof}
The key step is to prove that $\tilde{u}=v$. In fact, by Proposition \ref{solution-ubi} the process $\Tilde{u}=u+ \W^{Q}$ satisfies
 \begin{equation}\label{u-tilde}
\begin{cases}
d\tilde{u} + (v\cdot \nabla) \tilde{u} \ dt+\frac{\nabla \pi}{\rho} \ dt=d\W^Q \\
\tilde{u}|_{t=0} =v_0(x). & 
\end{cases}
\end{equation}
By Eq. \eqref{D2-eq1-RA} and applying the divergence operator in the above equation, we have
\begin{align*}
    (\text{div} \ \tilde{u})_t+ v\cdot\nabla(\text{div} \ \tilde{u} )=-\sum_{i,j=1}^3 v^i_\xxj\phi_\xij.
\end{align*}
From here, we proceed by following the same arguments as in Proposition~\ref{existence}. Therefore, we conclude that $\tilde{u}=v$. Thus, by Lemma \ref{limit-rhokbi} and Eq. \eqref{u-tilde}, we obtain the desired result.
\end{proof}

\begin{proposition}\label{uniquenessbi}
    The solution to the problem \eqref{eq0bi} is pathwise unique.
\end{proposition}
\begin{proof}
We consider $(\rho^1, \nabla \pi^1, v^1, \tau_1)$ and $(\rho^2, \nabla \pi^2, v^2, \tau_2)$ two solutions of the problem \eqref{eq0bi} with the same initial condition $(\rho_0, v_0)$. Thus, we define the stopping time  $\tau= \tau_1\wedge  \tau_2$ and the random variables
\begin{equation*}
    v^1=u^1+ \W^{Q} \quad \text{and} \quad v^2=u^2+ \W^{Q}, \quad\quad t\in [0,\tau ]
\end{equation*}
Since $(\rho^1, \nabla \pi^1, v^1, \tau_1)$ and $(\rho^2, \nabla \pi^2, v^2, \tau_2)$ are solutions of \eqref{eq0bi}, then $(\rho^1, \nabla \pi^1, u^1)$ and $(\rho^2, \nabla \pi^2, u^2)$, for all $t\in [0,\tau ]$, respectively, verify
 \begin{equation*}
    \begin{cases}
        \rho^1_t +  v^1 \cdot \nabla \rho^1 =0,\\
         u^1_t + (v^1\cdot \nabla) u^1 + (v^1\cdot \nabla) \W^{Q}+\frac{\nabla \pi^1}{\rho^1}=0 ,\,\\
       \text{div } u^1=0,
    \end{cases}
\end{equation*}
and
\begin{equation*}
    \begin{cases}
        \rho^2_t +  v^2 \cdot \nabla \rho^2 =0,\\
         u^2_t + (v^2\cdot \nabla) u^2 + (v^2\cdot \nabla) \W^{Q}+\frac{\nabla \pi^2}{\rho^2}=0 ,\,\\
       \text{div } u^2=0,
    \end{cases}
\end{equation*}
Setting $\sigma=\rho^2-\rho^1$,  $q=\pi^2-\pi^1 $ and $w= u^2- u^1=v^2- v^1$, with $\sigma(0)=w(0)=0$, and subtracting the previous equations, we arrive at
\begin{equation}\label{diffsolutions-RA}
    \begin{cases}
   \sigma_t + v^2\cdot \nabla \sigma =-w\cdot \nabla \rho^1,\\   
   w_t + v^2\cdot \nabla w + \frac{\nabla q}{\rho^2} =\frac{\sigma}{\rho^2\rho^1}\nabla \pi^1- w\cdot \nabla u^1-(w\cdot \nabla) \W^{Q},\\
 \text{div }w=0.
    \end{cases}
\end{equation}
From now on, with the help of Lemmas \ref{sigmaestbi}, \ref{qestbi} and \ref{hestbi}, we continuous with the same arguments as in the proof of Proposition \ref{uniqueness} and hence we show that $(\rho^1, \nabla \pi^1, v^1)=(\rho^2, \nabla \pi^2, v^2)$ on $[0,\tau]$, as claimed.
\end{proof}

\subsection{Maximal pathwise solution}

In the same way as in the Section \ref{section-maximal}, it is possible to establish the existence of a unique maximal solution to Problem \eqref{eq0bi}. Since the arguments are analogous to those presented in Section \ref{section-maximal}, we state the result below without proof.
\begin{proposition}
    There exists a unique maximal pathwise solution to Problem~\eqref{eq0bi}, in the sense of Definitions~\ref{defmaximalsolutionbi} and~\ref{maxuniqueness}, with initial data $(\rho_0, v_0)$.
\end{proposition}

\medskip

Author Christian Olivera is partially supported by FAPESP by the grant  $2020/04426-6$,  by FAPESP-ANR by the grant Stochastic and Deterministic Analysis for Irregular Models$-2022/03379-0$ and  CNPq by the grant $422145/2023-8$.  Claudia Espitia is partially supported by FAPESP by the grant  2023/06709-3 and 2024/09988-3.

\end{document}